\documentclass[11pt]{article}

\usepackage[margin=1in]{geometry}
\usepackage{amsmath,amssymb,amsthm,mathtools,mathrsfs}
\usepackage{enumitem,microtype,booktabs,longtable,array,aliascnt}
\usepackage[ruled,section]{algorithm}
\usepackage{algpseudocode}
\usepackage{graphicx}
\usepackage[colorlinks=true,linkcolor=blue,citecolor=blue,urlcolor=blue]{hyperref}
\usepackage[nameinlink,capitalise]{cleveref}

\usepackage{xargs}
\let\standarddagger\dagger

\def\ddefloop#1{\ifx\ddefloop#1\else\ddef{#1}\expandafter\ddefloop\fi}
\def\ddef#1{\expandafter\def\csname bb#1\endcsname{\ensuremath{\mathbb{#1}}}}
\ddefloop ABCDEFGHIJKLMNOPQRSTUVWXYZ\ddefloop
\def\ddefloop#1{\ifx\ddefloop#1\else\ddef{#1}\expandafter\ddefloop\fi}
\def\ddef#1{\expandafter\def\csname b#1\endcsname{\ensuremath{\mathbf{#1}}}}
\ddefloop ABCDEFGHIJKLMNOPQRSTUVWXYZ\ddefloop
\def\ddef#1{\expandafter\def\csname sf#1\endcsname{\ensuremath{\mathsf{#1}}}}
\ddefloop ABCDEFGHIJKLMNOPQRSTUVWXYZ\ddefloop
\def\ddef#1{\expandafter\def\csname c#1\endcsname{\ensuremath{\mathcal{#1}}}}
\ddefloop ABCDEFGHIJKLMNOPQRSTUVWXYZ\ddefloop
\def\ddef#1{\expandafter\def\csname h#1\endcsname{\ensuremath{\widehat{#1}}}}
\ddefloop ABCDEFGHIJKLMNOPQRSTUVWXYZ\ddefloop
\def\ddef#1{\expandafter\def\csname hc#1\endcsname{\ensuremath{\widehat{\mathcal{#1}}}}}
\ddefloop ABCDEFGHIJKLMNOPQRSTUVWXYZ\ddefloop
\def\ddef#1{\expandafter\def\csname t#1\endcsname{\ensuremath{\widetilde{#1}}}}
\ddefloop ABCDEFGHIJKLMNOPQRSTUVWXYZ\ddefloop
\def\ddef#1{\expandafter\def\csname tc#1\endcsname{\ensuremath{\widetilde{\mathcal{#1}}}}}
\ddefloop ABCDEFGHIJKLMNOPQRSTUVWXYZ\ddefloop
\def\ddef#1{\expandafter\def\csname #1#1\endcsname{\ensuremath{\mathbb{#1}}}}
\ddefloop ABCDEFGHIJKLMNOPQRSTUVWXYZ\ddefloop
\def\ddef#1{\expandafter\def\csname #1\endcsname{\ensuremath{\mathbb{#1}}}}
\ddefloop ABCDEFGHIJKLMNOPQRSTUVWXYZ\ddefloop
\def\ddef#1{\expandafter\def\csname D#1\endcsname{\ensuremath{\Delta(\mathcal{#1})}}}
\ddefloop ABCDEFGHIJKLMNOPQRSTUVWXYZ\ddefloop
\def\ddefloop#1{\ifx\ddefloop#1\else\ddef{#1}\expandafter\ddefloop\fi}
\def\ddef#1{\expandafter\def\csname scr#1\endcsname{\ensuremath{\mathscr{#1}}}}
\ddefloop ABCDEFGHIJKLMNOPQRSTUVWXYZ\ddefloop

\newcommandx{\gam}[3][1=x,2=z]{\gamma_{#2,#3}(#1)}
\newcommandx{\gamp}[3][1=x,2=z]{\dot\gamma_{#2,#3}(#1)}

\newcommandx{\gamz}[4][1=x,2=z,3=\lr,4=\xz]{\gamma_{#2,#3,#4}(#1)}
\newcommandx{\gamzp}[4][1=x,2=z,3=\lr,4=\xz]{\dot\gamma_{#2,#3,#4}(#1)}

\newcommand{\xz}{x_0}

\newcommand{\lr}{r}

\newcommand{\Cov}{\mathrm{Cov}}

\renewcommand{\dagger}{\texttt{Dagger}\xspace}

\DeclareMathOperator{\Var}{Var}

\newcommand{\polylog}{\mathrm{polylog}}

\newcommand{\deq}{\coloneqq}

\def\multiset#1#2{\ensuremath{\Bigl(\kern-.3em\Bigl(\genfrac{}{}{0pt}{}{#1}{#2}\Bigr)\kern-.3em\Bigr)}}

\DeclareMathOperator{\Law}{Law}

\usepackage{todonotes}

\newtheorem{theorem}{Theorem}[section]
\newtheorem*{unnumberedtheorem}{Theorem}
\newaliascnt{proposition}{theorem}
\newtheorem{proposition}[proposition]{Proposition}
\aliascntresetthe{proposition}
\newaliascnt{lemma}{theorem}
\newtheorem{lemma}[lemma]{Lemma}
\aliascntresetthe{lemma}
\newaliascnt{corollary}{theorem}
\newtheorem{corollary}[corollary]{Corollary}
\aliascntresetthe{corollary}
\newaliascnt{remark}{theorem}
\newtheorem{remark}[remark]{Remark}
\aliascntresetthe{remark}
\theoremstyle{definition}
\newaliascnt{definition}{theorem}
\newtheorem{definition}[definition]{Definition}
\aliascntresetthe{definition}
\numberwithin{equation}{section}
\makeatletter
\def\theHALG@line{\thealgorithm.\arabic{ALG@line}}
\makeatother
\crefname{algorithm}{Algorithm}{Algorithms}
\Crefname{algorithm}{Algorithm}{Algorithms}
\crefname{theorem}{Theorem}{Theorems}
\Crefname{theorem}{Theorem}{Theorems}
\crefname{proposition}{Proposition}{Propositions}
\Crefname{proposition}{Proposition}{Propositions}
\crefname{lemma}{Lemma}{Lemmas}
\Crefname{lemma}{Lemma}{Lemmas}
\crefname{corollary}{Corollary}{Corollaries}
\Crefname{corollary}{Corollary}{Corollaries}
\crefname{remark}{Remark}{Remarks}
\Crefname{remark}{Remark}{Remarks}
\crefname{definition}{Definition}{Definitions}
\Crefname{definition}{Definition}{Definitions}
\newcommand{\Prob}{\mathbb P}
\newcommand{\dd}{\mathrm d}

\newcommand{\TV}{\operatorname{TV}}

\newcommand{\prox}{\operatorname{prox}}

\newcommand{\1}{\mathbf 1}
\renewcommand{\cH}{\mathfrak H}
\newcommand{\opP}{\mathsf P}
\newcommand{\opU}{\mathsf U}
\newcommand{\opR}{\mathsf R}
\newcommand{\opS}{\mathsf S}
\newcommand{\opH}{\mathsf H}
\newcommand{\opHperpperp}{\mathsf H_{\perp\perp}}
\newcommand{\opK}{\mathsf K}
\newcommand{\opUPP}{\mathsf U_{\mathsf P\mathsf P}}
\newcommand{\opUperpP}{\mathsf U_{\perp\mathsf P}}
\newcommand{\opUperpperp}{\mathsf U_{\perp\perp}}
\newcommand{\opVperpP}{\mathsf V_{\perp\mathsf P}}
\newcommand{\opPi}{\mathsf{\Pi}}
\newcommand{\opGammaP}{\mathsf{\Gamma}_{\mathsf P}}
\newcommand{\opL}{\mathsf L}
\newcommand{\ran}{\operatorname{ran}}

\title{Accelerated High-Accuracy Sampling from a Warm Start\\
via the Proximal Bouncy Particle Sampler}
\author{
 Fan Chen\thanks{Department of Electrical Engineering and Computer Science,
 Massachusetts Institute of Technology.
 Email: \href{mailto:fanchen@mit.edu}{\texttt{fanchen@mit.edu}}.}
 \and
 Sinho Chewi\thanks{Department of Statistics and Data Science, Yale University.
 Email: \href{mailto:sinho.chewi@yale.edu}{\texttt{sinho.chewi@yale.edu}}.}
 \and
 Jianfeng Lu\thanks{Department of Mathematics, Duke University.
 Email: \href{mailto:jianfeng@math.duke.edu}{\texttt{jianfeng@math.duke.edu}}.}
 \and
 Matthew S. Zhang\thanks{Department of Mathematics, Massachusetts Institute of Technology.
 Email: \href{mailto:shuns436@mit.edu}{\texttt{shuns436@mit.edu}}.}
}
\date{}

\begin{document}
\begingroup
\let\dagger\standarddagger
\maketitle
\endgroup

\begin{abstract}
We study the problem of sampling from
\(\mu(\dd x)\propto e^{-V(x)}\,\dd x\) on \(\R^d\), where \(V\) is
\(\alpha\)-strongly convex and \(\beta\)-smooth, and write
\(\kappa\deq\beta/\alpha\).  We design and analyze the Proximal Bouncy
Particle Sampler (Proximal BPS), a new sampler that combines ideas from the proximal sampler and the bouncy particle sampler.
From a warm start initialization with \( O(1) \) R\'enyi divergence
w.r.t.\ \(\mu\), Proximal BPS returns a sample whose law is $\varepsilon$-close to $\mu$ in total variation distance using
\(\widetilde O(\sqrt\kappa\,d^{1/4} \,\polylog(1/\varepsilon))\) gradient queries in expectation.
\end{abstract}

\tableofcontents

\section{Introduction}

We study the problem of sampling from the probability measure
\(\mu(\dd x)=Z^{-1}e^{-V(x)}\,\dd x\) on \(\R^d\), where
\(V\in C^2(\R^d)\) satisfies, for \(0<\alpha\le\beta\),
\begin{equation}
 \alpha I\preceq\nabla^2V(x)\preceq\beta I\,,
 \qquad x\in\R^d\,,
 \qquad \kappa\deq\tfrac\beta\alpha\,.
 \label{eq:SC-smooth}
\end{equation}
The main result of this paper is the following.

\begin{unnumberedtheorem}[Main theorem; informal]
Assume \eqref{eq:SC-smooth}, and suppose that the initial law \(\mu_0\)
satisfies the warm start condition in order-$2$ R\'enyi divergence: \(D_2(\mu_0\|\mu)=O(1)\).  For every
\(0<\varepsilon<1/4\), there is a randomized algorithm that, given
\(X_0\sim\mu_0\) and query access to \(\nabla V\), returns
\(\widehat X\sim\widehat\mu\) satisfying
\[
 \TV(\widehat\mu,\mu)\le\varepsilon\,,
\]
and whose expected number of queries to \(\nabla V\) is at most
\[
 \widetilde O \bigl(\sqrt\kappa\,d^{1/4}\bigr)
\,.
\]
Here \(\widetilde O\) hides universal powers of logarithms in
\(d\), \(\kappa\), and \(1/\varepsilon\).
\end{unnumberedtheorem}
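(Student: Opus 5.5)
The plan follows the proximal sampler template. The outer loop is the proximal Gibbs sampler with step size \(\eta\). It alternates \(y\sim\mathcal N(x,\eta I)\) with a draw from the restricted Gaussian oracle (RGO)
\[
 \pi^{X|Y=y}(x)\propto\exp\bigl(-V(x)-\tfrac1{2\eta}\|x-y\|^2\bigr).
\]
The inner RGO is implemented by a bouncy-particle-type piecewise deterministic sampler; this is what supplies the acceleration. Since \(\mu\) is \(\alpha\)-strongly log-concave, the exact proximal sampler contracts in \(\chi^2\) (equivalently \(D_2\)) at rate \((1+\alpha\eta)^{-2}\) per iteration. From a warm start \(D_2(\mu_0\|\mu)=O(1)\), it therefore needs \(N=\widetilde O(1/(\alpha\eta))\) outer iterations to reach TV error \(\varepsilon/2\).

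The key scaling choice is \(\eta\asymp 1/(\beta\sqrt d)\), up to logs. This gives \(N=\widetilde O(\kappa\sqrt d)\) outer steps, so the main theorem follows if each RGO call costs \(O(1)\) gradient queries in expectation, amortized appropriately. As stated, that would give \(\kappa\sqrt d\) rather than \(\sqrt\kappa\,d^{1/4}\). To obtain the accelerated rate I would instead not refresh fully at each step. The idea is to run the BPS/proximal hybrid as a kinetic process: a velocity persists across outer iterations, and the proximal step plays the role of an exact Gaussian refreshment. The relevant object is then the continuous-time BPS targeting \(\mu\), with refresh rate \(\asymp\sqrt\alpha\). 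Its mixing time is known (via hypocoercivity/variational methods) to be \(\widetilde O(\sqrt\kappa/\sqrt\alpha)\) in time units when the velocity scale is normalized. I would use this in the form: the continuous-time process reaches \(\varepsilon/2\) TV from a \(D_2\)-warm start in time \(T=\widetilde O(1/\sqrt\alpha)\), using an \(L^2\) (\(\chi^2\)) contraction estimate so that the warm start in \(D_2\) enters only logarithmically.

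The second ingredient is simulating the BPS exactly by Poisson thinning with few gradient queries. On each short window of length \(h\), the bounce rate \(\langle v,\nabla V(x+tv)\rangle_+\) is upper-bounded by a linear bound built from \(\beta\)-smoothness, which costs one gradient query per window. The thinning proposals then cost \(O(1+h\cdot\text{rate bound})\) queries. Here the proximal splitting is essential. Subtracting the quadratic \(\tfrac1{2\eta}\|x-y\|^2\) part analytically lets the dominating rate depend on fluctuations of size \(\sqrt\beta\,\|v\|\cdot\sqrt\beta h\) rather than on \(\|\nabla V\|\sim\sqrt{\beta d}\). With \(\|v\|\sim\sqrt d\), the expected number of candidate events per unit time is about \(\beta h d\). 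Choosing \(h\asymp 1/(\sqrt\beta\,d^{1/4})\) balances the window count \(T/h\) against the candidate count. The resulting total expected query count is \(T\cdot\sqrt\beta\,d^{1/4}=\widetilde O(\sqrt\kappa\,d^{1/4})\).

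Finally, the warm start controls the expected cost. The occupation measure of the process has density ratio to \(\mu\) bounded in \(L^2\) by \(e^{D_2}\), so expected query counts computed under stationarity transfer to the actual run up to a constant factor, by Cauchy–Schwarz. Stopping after a fixed number of iterations, and truncating on the rare events with excessive queries (which cost at most \(\varepsilon/2\) in TV), gives the expected-query bound.

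The main obstacle is the third step: showing rigorously that the thinning bound yields an \(O(\beta h d)\) expected bounce-candidate rate uniformly along the trajectory, and not just at stationarity, while the velocity norm concentrates at \(\sqrt d\). I would handle this by proving a stationary moment bound for \(\langle v,\nabla^2 V\,v\rangle\)-type increments and transferring it via the \(D_2\) argument above, with a polylog union bound over windows.
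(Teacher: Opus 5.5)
There is a genuine gap, and it is in the step that is supposed to produce the \(d^{1/4}\). Your own accounting does not give the claimed bound. With \(T/h\) windows at one query each, and about \(\beta hd\) candidate events per unit time, the cost is \(T/h+T\beta hd\ge2T\sqrt{\beta d}\), minimized at \(h\asymp(\beta d)^{-1/2}\). Your choice \(h\asymp\beta^{-1/2}d^{-1/4}\) makes the candidate term \(T\sqrt\beta\,d^{3/4}\), not \(T\sqrt\beta\,d^{1/4}\). So with \(T\asymp1/\sqrt\alpha\) your scheme gives \(\widetilde O(\sqrt{\kappa d})\) at best.

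Better bookkeeping cannot repair this.
\begin{itemize}
\item If the process is BPS targeting \(\mu\), there is no quadratic \(\|x-y\|^2/(2\eta)\) to absorb. The true stationary rate \([v^\top\nabla V(x)]_+\) is typically of order \(\sqrt{\beta d}\), e.g.\ for a Gaussian with \(\nabla^2V=\mathrm{diag}(\alpha,\beta,\ldots,\beta)\). Over a horizon \(\asymp1/\sqrt\alpha\) that means about \(\sqrt{\kappa d}\) genuine bounces, each needing a gradient query. Rescaling the speed changes the rate and the horizon inversely, so it does not help.
\item If instead you absorb the window-start gradient \(\nabla V(x_0)\) into the flow, the residual is \(\nabla V(x_0+tv)-\nabla V(x_0)\approx t\,\nabla^2V\,v\). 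This is aligned with \(v\), so the rate is about \(t\,v^\top\nabla^2V\,v\asymp t\beta d\), which is exactly your \(\beta hd\). Worse, a reference chosen as a function of the current state does not in general preserve \(\mu\otimes\varphi\).
\end{itemize}

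You are missing the two ideas the paper's proof rests on.

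(i) The reference point \(\widetilde X\) is an \emph{independent} draw from \(\pi_\eta^{X\mid Y=y}\). It is obtained from the FORS-based RGO with \(O(1)\) expected queries, after a \(\log\)-cost approximate prox. Both \(\nabla V(\widetilde X)\) and the coupling quadratic go into a harmonic flow about \(c=y-\eta\nabla V(\widetilde X)\), and bounces react only to \(\nabla V(x)-\nabla V(\widetilde X)\). Because the law of \(\widetilde X\) depends only on \(y\), the averaged kernel stays reversible (\cref{prop:half-invariance}). Because \(\widetilde X\) is independent of the momentum, \(p^\top h\) is of order \(\|h\|\lesssim\beta\sqrt{\eta d}\) rather than \(\|p\|\,\|h\|\). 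Hence a time-\(\pi\) half-turn has \(O(\beta\eta\sqrt d)\) expected bounces (\cref{prop:ideal-bounce-count}). This recovers exactly the \(\sqrt d\) factor your windows lose.

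(ii) Acceleration does not come from continuous-time BPS on \(\mu\) with refresh rate \(\sqrt\alpha\). It comes from a discrete chain: the reflection \(y\mapsto2x-y\), the half-turn, and resampling with probability \(\rho\asymp\sqrt{\alpha\eta\log(e/\alpha\eta)}\). The half-turn acts nearly as \(-I\) on the microscopic directions created by the reflection (\cref{prop:global-halfturn-cutoff}). A discrete modified-\(L^2\) argument then gives \(\chi^2\) contraction after \(N\asymp(\alpha\eta)^{-1/2}\) iterations, up to logs. Taking \(\eta\asymp1/(\beta\sqrt d)\) yields \(\sqrt\kappa\,d^{1/4}\) iterations at polylog cost each.

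Your description that ``the proximal step plays the role of an exact Gaussian refreshment'' while a velocity persists is not a construction with a verified invariant law. So the convergence estimate you invoke has nothing concrete to apply to.

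The uniform-along-trajectory worry you flag is handled in the paper by capping the rate. This makes the per-half-turn query cost exactly \(\pi\bar\lambda\) in expectation. The truncation is charged to a coupling error controlled by \(D_2\) and Cauchy--Schwarz.
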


The precise statement and parameter choices appear in
\cref{thm:main-general,cor:tuned}.

We compare our result with \emph{high-accuracy samplers}, that is,
samplers whose complexity depends polylogarithmically on
\(1/\varepsilon\).  Under \eqref{eq:SC-smooth}, the state of the art
until recently was the Metropolis-adjusted Langevin algorithm
(MALA)~\cite{altschuler-chewi-warm-starts,chewi-et-al-mala,wu-schmidler-chen}
and the proximal sampler~\cite{fors,fan-yuan-chen}, which achieved
\(\widetilde O(\kappa d^{1/2})\) complexity.
Very recently, Chen et al.~\cite{exact-diffusions} used the first-order
rejection sampling (FORS) mechanism to develop a method for exact
simulation of diffusions.  Together with accelerated entropic
hypocoercivity~\cite{li-lu-hypercontractivity,lu-entropy}, this yielded
a high-accuracy sampler with complexity
\(\widetilde O(\kappa^{2/3}d^{1/3})\).
Also, Lu and Luo~\cite{LuLuo26Zigzag} introduced windowed thinning,
an exact simulation method for the bouncy particle sampler (BPS) and the zigzag sampler.
Their estimates give
expected complexities of \(\widetilde O(\kappa^{1/2}d)\) gradient queries
for BPS and \(\widetilde O(\kappa d^{5/4})\) partial derivative queries
for zigzag, from a warm start.
These results were significant in part because they constituted progress
toward acceleration for log-concave sampling.

From a warm start, our
main theorem achieves complexity
\(\widetilde O(\kappa^{1/2}d^{1/4})\).  It matches the condition number
dependence of BPS while improving its dimension
dependence, and always improves upon the computational cost of the zigzag sampler (even if we consider one gradient query to be $d$  times as expensive as a partial derivative query).
It also improves over all other prior results in both parameters. In particular, it achieves full
acceleration together with $d^{1/4}$ dependence on dimension.

In a companion work~\cite{picard-hmc}, we will show how to obtain a warm start using
\(\widetilde O(\kappa^{7/6}d^{1/6})\) gradient queries.  We do not discuss the warm start
question further in this paper.

\paragraph{Proximal Bouncy Particle Sampler.}
Our algorithm, the \emph{Proximal Bouncy Particle Sampler} (Proximal
BPS), combines two ideas.  The proximal sampler~\cite{lee-shen-tian}
breaks the sampling problem into a sequence of easier local problems
by introducing a Gaussian perturbation of the current position.
The bouncy particle sampler~\cite{bps-original} explores a distribution
by moving a particle along straight lines, with random changes of direction
chosen to preserve the desired distribution.  We bring these ideas
together by designing local particle motions that retain a memory of
previous steps.  This memory acts as momentum, allowing the algorithm
to make sustained progress across successive local problems and giving
the accelerated convergence rate in our main theorem.  The local motions
follow explicitly computable curved paths, with occasional ``bounces''
that correct for the shape of the target distribution.  A gradient
evaluation at a nearby reference point makes these corrections inexpensive:
since the gradient varies little over a small region, only a small
number of bounces is needed.  Balancing the size of the local problems
against the cost of these corrections yields the
\(\widetilde O(\kappa^{1/2}d^{1/4})\) query bound. We feel that the construction of the sampler (with the help of GPT-5.6 Sol) contains some surprising ingredients.
We describe the construction and its geometry in
\cref{sec:algorithm-overview}, and give the implementation and complexity
analysis in \cref{sec:implementation}.

\paragraph{Related work.}
This work makes progress on the complexity of log-concave sampling, for which the textbook~\cite{Chewi26Book} provides an introduction and overview.

The proximal sampling framework is from Lee, Shen, and
Tian~\cite{lee-shen-tian}, with subsequent analyses and implementations
in \cite{fors,chen-chewi-salim-wibisono,fan-yuan-chen}.  These methods
alternate independent draws of the position and the auxiliary variable
from their conditional distributions.  Our update reflects the auxiliary
point through the current position, a step related to the Gaussian
overrelaxation method of Neal~\cite{neal-overrelaxation}.  We combine
this reflection with a local piecewise deterministic particle motion run for half of its
oscillation period. As we shall explain, this helps useful information to persist across iterations.

The rule for the bounces builds on the BPS of Bouchard-C\^ot\'e,
Vollmer, and Doucet~\cite{bps-original}.  The use of a quadratic
reference potential to generate curved paths, with bounces correcting
for the remaining force, is adapted from the boomerang sampler of
Bierkens, Grazzi, Kamatani, and Roberts~\cite{boomerang}.  This separation
of the force also fits the general framework for Hamiltonian piecewise
deterministic Markov processes of Andrieu, Durmus, N\"usken, and
Roussel~\cite{adnr}.

Existing convergence analyses of these processes
\cite{adnr,dobson-bierkens,lu-wang} concern continuous-time dynamics
with repeated random refreshment of the velocity.  Our local motion has
no such refreshment: it stops after a half-period, and convergence
comes from combining it with the auxiliary reflection step with occasional
resampling.  In particular, the local motion alone need not converge to
its conditional equilibrium.  Thus, the continuous-time results do not
directly give a convergence guarantee for our discrete chain.
Our analysis instead adapts the modified \(L^2\) hypocoercivity method of
Dolbeault, Mouhot, and Schmeiser~\cite{dms}, in the sharpened formulation
of Fan, Li, and Lu~\cite{fan-li-lu}, to this discrete setting. The technical details of the hypocoercivity mechanism in Appendices~\ref{sec:micro-macro} and~\ref{app:key-estimates} should be of interest to specialists.

\paragraph{AI usage.}
The algorithm and its original analysis were found by interactions with GPT-5.6 Sol. The authors recast the arguments into a more standard hypocoercive framework, prepared the manuscript, and take full responsibility for its contents.

\section{Preliminaries}
\label{sec:preliminaries}

\subsection{The standard Bouncy Particle Sampler}
\label{sec:standard-bps-prelim}

We first recall the usual Bouncy Particle Sampler (BPS) for readers
unfamiliar with the construction~\cite{bps-original}.  It is a
piecewise-deterministic Markov process on phase space
\((x,p)\in\R^d\times\R^d\), where \(p\) denotes velocity (equivalently,
momentum for unit mass).  Its invariant distribution is
\(\mu\otimes\varphi\), where \(\varphi=\cN(0,I)\).  Between events, the
particle moves along a straight line with constant \(p\):
\begin{equation}
 \dot x_t=p_t\,,\qquad \dot p_t=0\,.
 \label{eq:standard-bps-flow-prelim}
\end{equation}
At state \((x,p)\), a bounce occurs at rate
\begin{equation}
 \lambda_{\rm BPS}(x,p)
 \deq[p^\top\nabla V(x)]_+\,.
 \label{eq:standard-bps-rate-prelim}
\end{equation}
Equivalently, conditional on the current state and in the absence of a
velocity refresh, the next bounce time \(\tau_{\rm b}\) satisfies
\begin{equation}
 \Prob(\tau_{\rm b}>t\mid x,p)
 =\exp \Bigl\{-\int_0^t
 [p^\top\nabla V(x+sp)]_+\,\dd s\Bigr\}\,.
 \label{eq:standard-bps-waiting-time-prelim}
\end{equation}
At a bounce, the position is unchanged and the velocity is specularly
reflected against the gradient.  For non-zero \(h\in \R^d\), set
\begin{equation}
 R_h\deq I-2\,\frac{hh^\top}{\|h\|^2}\,,
 \qquad R_0\deq I\,.
 \label{eq:specular}
\end{equation}
The bounce update is \(p\leftarrow R_{\nabla V(x)}p\).  It preserves
\(\|p\|\), leaves the component tangent to a level set unchanged, and
reverses the normal component, since
\((R_h p)^\top h=-p^\top h\).  Thus, the particle travels freely
while moving down the potential and bounces, at a rate equal to its
instantaneous uphill directional derivative, when it moves up the
potential. Proximal BPS reuses this bounce mechanism, applied to a localized
conditional target; the construction is given in \cref{sec:algorithm}.

The standard bouncy particle sampler also uses independent velocity refreshes to ensure ergodicity, at which \(p\) is redrawn from \(\varphi\).  Writing
\[
 (\opPi_p f)(x,p)\deq\int_{\R^d}f(x,p')\,\varphi(\dd p')\,,
\]
the full generator of BPS is
\begin{equation}
 \begin{split}
 (\opL_{\rm BPS}f)(x,p)
 &=p^\top\nabla_xf(x,p)
 +[p^\top\nabla V(x)]_+
 \{f(x,R_{\nabla V(x)}p)-f(x,p)\}
+\gamma(\opPi_p f-f)(x,p)\,.
 \end{split}
 \label{eq:standard-bps-generator-prelim}
\end{equation}
For every refresh rate \(\gamma\ge0\), the process with
generator \eqref{eq:standard-bps-generator-prelim} leaves
\(\mu\otimes\varphi\) invariant~\cite{bps-original}, and its position
marginal is the desired target \(\mu\).

A standard way to simulate the inhomogeneous event clock
\eqref{eq:standard-bps-waiting-time-prelim} is Poisson
thinning~\cite{lewis-shedler}.  Suppose that, along the current deterministic segment, the
rate \(\lambda(\cdot)\) is bounded above by a constant \(\bar\lambda\).
Generate candidate times from a homogeneous Poisson process of rate
\(\bar\lambda\) and accept a candidate at time \(t\) with probability
\(\lambda(t)/\bar\lambda\); a rejected candidate causes no jump, and
the deterministic trajectory continues.  The accepted times then have rate
\(\lambda(\cdot)\), so this
procedure simulates the event clock exactly. Note that only at each candidate time, one needs to query the potential gradients. If no
convenient dominating bound is available, accepting instead with
probability \(\min\{1,\lambda(t)/\bar\lambda\}\) exactly simulates the
truncated rate \(\min\{\lambda(t),\bar\lambda\}\); the only
approximation is then the replacement of the original rate by its
truncation.  This is used in
\cref{sec:simulating-bounces}.

\subsection{Proximal augmentation and the restricted Gaussian oracle}
\label{sec:proximal-augmentation-prelim}
Our algorithm operates on the augmented distribution underlying the
proximal sampler of Lee, Shen, and Tian~\cite{lee-shen-tian}.  For a
scale \(\eta\in(0,1/\beta]\), consider the joint law
\begin{equation}
 \pi_\eta(\dd x\,\dd y)
 \propto
 \exp \Bigl\{-V(x)-\frac{\|x-y\|^2}{2\eta}\Bigr\}
 \,\dd x\,\dd y\,.
 \label{eq:augmentation}
\end{equation}
The original proximal sampler algorithm performs Gibbs sampling on $\pi_\eta$.
Namely, under \(\pi_\eta\) one may generate the joint state as
\begin{equation}
 X\sim\mu\,,\qquad Z\sim\cN(0,I)\ \text{independently}\,,
 \qquad Y=X+\sqrt\eta\,Z\,.
 \label{eq:augmentation-generative}
\end{equation}
Its \(X\)-marginal is \(\pi_\eta^X=\mu\).  Write \(\pi_\eta^Y\) for
the \(Y\)-marginal, so that
\(\pi_\eta^Y(\dd y)\propto e^{-U_\eta(y)}\,\dd y\); by
\eqref{eq:augmentation-generative}, \(\pi_\eta^Y=\mu*\cN(0,\eta I)\)
is a Gaussian smoothing of the target.
We use superscripts for marginal and conditional laws; in particular,
\begin{equation}
 \pi_\eta^{Y\mid X=x}=\cN(x,\eta I)\,,
 \qquad
 \pi_\eta^{X\mid Y=y}(\dd x)\propto
 \exp \Bigl\{-V(x)-\frac{\|x-y\|^2}{2\eta}\Bigr\}\,\dd x\,.
 \label{eq:conditional-prox-law}
\end{equation}
Gibbs sampling alternates exact draws from the two conditional laws in
\eqref{eq:conditional-prox-law}.
The scale \(\eta\) controls the strength of the quadratic coupling
potential.
Decreasing \(\eta\) makes \(X\) and \(Y\) more tightly coupled and the
conditional law of \(X\) given \(Y\) more localized and better
conditioned, which reduces the cost of conditional sampling.  The
resulting transitions are also more local, however, and generally require
more Gibbs sampling iterations.  Thus \(\eta\) balances the cost of each
iteration against the number of iterations.
The convergence and query
complexity are studied further in
\cite{fors,chen-chewi-salim-wibisono,fan-yuan-chen}.

The next proposition collects the basic geometric properties of the
augmented distribution used throughout the paper: the curvature and
concentration of the conditional law, the curvature of the
\(Y\)-marginal, and a reflection symmetry of the joint law.

\begin{proposition}[Geometry of the joint law]
\label{prop:augmentation}
The joint law \eqref{eq:augmentation} has the following properties.
\begin{enumerate}[label=\textup{(\roman*)},leftmargin=2.2em]
\item For every \(y\in\R^d\), the conditional law
\(\pi_\eta^{X\mid Y=y}\) has potential (that is, negative
log-density up to an additive constant)
\begin{equation}
 V_y(x)\deq V(x)+\frac{\|x-y\|^2}{2\eta}\,.
 \label{eq:regularized-prox-objective}
\end{equation}
This conditional potential satisfies
\begin{equation}
 (\alpha+\eta^{-1})I
 \preceq\nabla^2V_y(x)
 \preceq(\beta+\eta^{-1})I\,,
 \label{eq:conditional-curvature}
\end{equation}
so its condition number is at most
\((1+\beta\eta)/(1+\alpha\eta)\le2\).  Consequently, for every
smooth \(f\),
\begin{equation}
 \Var_{\pi_\eta^{X\mid Y=y}}(f)
 \le \frac{\eta}{1+\alpha\eta}\,
 \E_{\pi_\eta^{X\mid Y=y}}\|\nabla f\|^2\,,
 \label{eq:conditional-poincare-prelim}
\end{equation}
and
\begin{equation}
 \frac{\eta}{1+\beta\eta}I
 \preceq
 \Cov_{\pi_\eta^{X\mid Y=y}}(X)
 \preceq
 \frac{\eta}{1+\alpha\eta}I\,.
 \label{eq:conditional-covariance-prelim}
\end{equation}
\item The \(Y\)-marginal potential satisfies
\begin{equation}
 \frac{\alpha}{1+\alpha\eta}I
 \preceq\nabla^2U_\eta(y)
 \preceq\frac{\beta}{1+\beta\eta}I\,.
 \label{eq:Y-curvature}
\end{equation}
\item The state-space reflection
\begin{equation}
 \opU : (x,y)\mapsto (x,2x-y)
 \label{eq:easy-reflection}
\end{equation}
preserves \(\pi_\eta\) and is an involution.
\end{enumerate}
\end{proposition}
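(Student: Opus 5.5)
We prove the three items separately; each reduces to a standard fact about strongly log-concave measures or to a direct change of variables.

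For (i), the conditional density in \eqref{eq:conditional-prox-law} is $\exp(-V_y)$ up to normalization, so $V_y$ is its potential. Differentiating twice gives $\nabla^2V_y=\nabla^2V+\eta^{-1}I$, and \eqref{eq:SC-smooth} yields \eqref{eq:conditional-curvature}. The condition number is then at most $(1+\beta\eta)/(1+\alpha\eta)\le 1+\beta\eta\le 2$, because $\eta\le1/\beta$.

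The Poincar\'e inequality \eqref{eq:conditional-poincare-prelim} follows from the Brascamp--Lieb inequality, or from Bakry--\'Emery, for a measure whose potential is $(\alpha+\eta^{-1})$-strongly convex. The constant is $1/(\alpha+\eta^{-1})=\eta/(1+\alpha\eta)$.

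The upper covariance bound in \eqref{eq:conditional-covariance-prelim} comes from applying this Poincar\'e inequality to the linear functions $f(x)=v^\top x$ with $\|v\|=1$. For the lower bound I would use the Cram\'er--Rao type inequality $\Cov_\nu(X)\succeq(\E_\nu\nabla^2 W)^{-1}$, valid for $\nu\propto e^{-W}$. One proof: take $f(x)=v^\top(x-\E X)$ and apply Cauchy--Schwarz to $\E[f\,\nabla W]$. Integration by parts gives $\E[\nabla W\,(x-\E X)^\top]=I$, hence
\[
1=\bigl(v^\top \E[\nabla W (X-\E X)^\top] v\bigr)^2 \le \E[(v^\top\nabla W)^2]\,\Var(v^\top X),
\]
and $\E[(v^\top\nabla W)^2]=\E[v^\top\nabla^2W\,v]$, again by integration by parts. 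With $\nabla^2V_y\preceq(\beta+\eta^{-1})I$ this gives the lower bound $\eta/(1+\beta\eta)$.

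For (ii), write $e^{-U_\eta(y)}\propto\int e^{-V(x)-\|x-y\|^2/(2\eta)}\,\dd x$. Differentiating under the integral, using the conditional law,
\[
\nabla^2U_\eta(y)=\eta^{-1}I-\eta^{-2}\Cov_{\pi_\eta^{X\mid Y=y}}(X).
\]
Substituting the two-sided covariance bound from (i) gives
\[
\eta^{-1}-\eta^{-2}\tfrac{\eta}{1+\alpha\eta}=\tfrac{\alpha}{1+\alpha\eta}
\quad\text{and}\quad
\eta^{-1}-\eta^{-2}\tfrac{\eta}{1+\beta\eta}=\tfrac{\beta}{1+\beta\eta},
\]
which is exactly \eqref{eq:Y-curvature}. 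Justifying differentiation under the integral is routine, because the integrand is Gaussian-dominated in $x$ locally uniformly in $y$.

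For (iii), the map $\opU(x,y)=(x,2x-y)$ is linear with $\opU^2=\mathrm{id}$ and $|\det|=1$, so it preserves Lebesgue measure. It also preserves the density, since $\|x-(2x-y)\|=\|y-x\|$. Hence $\opU$ pushes $\pi_\eta$ forward to itself.

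The main obstacle is the lower covariance bound in (i), since (ii) depends on it. If the integration-by-parts argument above proves awkward to present, I would fall back on citing Brascamp--Lieb and the Cram\'er--Rao bound for log-concave measures.
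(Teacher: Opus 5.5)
Your proof is correct and follows essentially the same route as the paper: the Bakry--\'Emery Poincar\'e inequality with $m=\alpha+\eta^{-1}$, the covariance sandwich of \cref{lem:covariance-bounds} with $L=\beta+\eta^{-1}$, the identity $\nabla^2U_\eta(y)=\eta^{-1}I-\eta^{-2}\Cov(X\mid Y=y)$ from \eqref{eq:Moreau-Hessian} for (ii), and a direct change of variables for (iii). Your Poincar\'e-on-linear-functions upper bound and integration-by-parts Cram\'er--Rao lower bound are simply unpacked versions of the paper's Brascamp--Lieb and Cram\'er--Rao citations, and you correctly note that the bound $\le 2$ on the condition number uses the standing assumption $\eta\le1/\beta$.
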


The Hessian
bound in part~(i) follows immediately from \eqref{eq:SC-smooth}; the
Poincar\'e and covariance bounds then follow from the standard
log-concavity facts \eqref{eq:poincare-tool} and
\cref{lem:covariance-bounds} in Appendix~\ref{app:tools}, applied with
\(m=\alpha+\eta^{-1}\) and \(L=\beta+\eta^{-1}\).  The
marginal curvature bound in part~(ii) is derived in
Appendix~\ref{app:tools}, at \eqref{eq:Moreau-Hessian}.  The
reflection statement follows directly from \eqref{eq:augmentation}.

We use the standard proximal map convention of
Moreau~\cite{moreau}:
\begin{equation}
 \prox_{\eta V}(y)
 \deq\arg\min_{x\in\R^d}
 \Bigl\{V(x)+\frac{\|x-y\|^2}{2\eta}\Bigr\}\,.
 \label{eq:prox-definition}
\end{equation}
The minimizer is unique under \eqref{eq:SC-smooth}.  The objective
minimized here is exactly the conditional potential
\eqref{eq:regularized-prox-objective}, so \(\prox_{\eta V}(y)\) is the
mode of \(\pi_\eta^{X\mid Y=y}\); the conditional draw is in this
sense a stochastic analogue of a proximal step, whence the
terminology arises.  This exact minimizer is used below only in the analysis; the implementable algorithm never evaluates it and
does not assume access to a proximal oracle.
We use the following
conditional sampling primitive, introduced in \cite{lee-shen-tian}.

\begin{definition}[Restricted Gaussian oracle]
For \(\eta>0\), a restricted Gaussian oracle (RGO) takes input
\(y\in\R^d\) and, using fresh randomness, returns a sample with law
\(\pi_\eta^{X\mid Y=y}\).
\end{definition}

\subsection{Divergences and notation}
\label{sec:divergences-notation-prelim}

For probability measures \(P\ll Q\) and \(s>1\), define the R\'enyi
divergence\footnote{Our notation \(D_s(P\|Q)\)
corresponds to \(\mathcal R_s(P\|Q)\) in \cite[Definition~2.2.23]{Chewi26Book}, with the same definition
and normalization; we use $D_s$ here to avoid possible confusion with the reflection operator $R_h$ defined in \eqref{eq:specular}.} of order \(s\) and the \(\chi^2\)-divergence by
\begin{equation}
 D_s(P\|Q)\deq\frac1{s-1}\log\int
 \Bigl(\frac{\dd P}{\dd Q}\Bigr)^s\,\dd Q\,,
 \qquad
 \chi^2(P\|Q)\deq e^{D_2(P\|Q)}-1\,.
 \label{eq:renyi-chi}
\end{equation}
We set \(D_s(P\|Q)=+\infty\) when \(P\not\ll Q\).  We refer to a
bound \(D_2(P\|Q)\le\Delta\) as an order-2 R\'enyi
warm start bound.  Our convention for total variation is
\(
 \TV(P,Q)\deq\sup_A|P(A)-Q(A)|
\).
We use the standard inequality
\begin{equation}
 \TV(P,Q)\le\frac12\sqrt{\chi^2(P\|Q)}\,.
 \label{eq:TV-chi-prelim}
\end{equation}
For any coupling \((X,Y)\) of \(P\) and \(Q\),
\(\TV(P,Q)\le\Prob\{X\ne Y\}\), and a maximal coupling of $X$ and $Y$ attains
equality.

If \(X_0\sim\mu_0\) and, conditionally on \(X_0\),
\(Y_0\sim\cN(X_0,\eta I)\), then, whenever \(\mu_0\ll\mu\),
\[
 \frac{\dd\mathcal L(X_0,Y_0)}{\dd\pi_\eta}(x,y)
 =\frac{\dd\mu_0}{\dd\mu}(x)\,.
\]
Consequently,
\begin{equation}
 D_2\bigl(\mathcal L(X_0,Y_0)\|\pi_\eta\bigr)
 =D_2(\mu_0\|\mu)\,.
 \label{eq:warm-lift-intro}
\end{equation}
The identity also holds in the extended sense when
\(\mu_0\not\ll\mu\), in which case both sides are infinite.

\paragraph{Notation.}
For a random variable \(W\), \(\mathcal L(W)\) denotes its law, and
for a probability measure \(P\) on a product space, \(P^X\) denotes
its \(X\)-marginal. Markov kernels are denoted by calligraphic
symbols. If \(\mathcal K\) is a Markov kernel,
\(P\mathcal K\) denotes the law obtained by applying \(\mathcal K\) to
\(P\), and \(\mathcal K^m\) denotes its \(m\)-fold iterate. Linear operators and state-space transformations are denoted by
sans-serif symbols, scalar functionals by script letters, and
finite-dimensional matrices by ordinary uppercase symbols.  We write
\([u]_+\deq\max\{u,0\}\).  The notation \(\|\cdot\|\) denotes the Euclidean norm for
vectors, the induced operator norm for matrices and bounded linear
operators, and the ambient \(L^2\)-norm for functions; inner products
are interpreted analogously from context. Unsubscripted letters
\(c,C>0\) denote universal constants whose values may change from line
to line, while constants denoted by \(K\), with or without a
descriptive subscript, are fixed within the result in which they are
introduced.  The notation \(\widetilde O(B)\) means \(B\) times a
fixed universal power of logarithms in the displayed problem
parameters; no polynomial dependence is hidden.

\cref{tab:notation} collects the notation used most
frequently in the remainder of the paper.
\begingroup
\footnotesize
\renewcommand{\arraystretch}{1.18}
\setlength{\LTcapwidth}{\textwidth}
\begin{longtable}{@{}
 >{\raggedright\arraybackslash}p{0.14\textwidth}
 >{\raggedright\arraybackslash}p{0.31\textwidth}
 >{\raggedright\arraybackslash}p{0.14\textwidth}
 >{\raggedright\arraybackslash}p{0.31\textwidth}@{}}
\caption{Frequently used notation, grouped by theme.}
\label{tab:notation} \\
\toprule
\textbf{Symbol} & \textbf{Meaning} & \textbf{Symbol} & \textbf{Meaning} \\
\midrule
\endfirsthead
\caption[]{Frequently used notation, grouped by theme (continued).} \\
\toprule
\textbf{Symbol} & \textbf{Meaning} & \textbf{Symbol} & \textbf{Meaning} \\
\midrule
\endhead
\midrule
\multicolumn{4}{r@{}}{\emph{Continued on next page}} \\
\endfoot
\bottomrule
\endlastfoot
\multicolumn{4}{@{}l}{\emph{Target and problem parameters}} \\*
\(d\) & Ambient dimension. &
\(V\) & Potential \(V:\R^d\to\R\). \\
\(\mu\) & Target law \(\mu(\dd x)\propto e^{-V(x)}\,\dd x\). &
\(\alpha,\beta\) & Strong-convexity and smoothness constants of
\(V\). \\
\(\kappa\) & Condition number \(\kappa=\beta/\alpha\). &
\(\varepsilon,\Delta\) & Target total variation accuracy and order-2
R\'enyi warm start budget. \\
\midrule
\multicolumn{4}{@{}l}{\emph{Proximal augmentation}} \\*
\(\eta\) & Proximal scale. &
\(\pi_\eta\) & Augmented target on \((X,Y)\). \\
\(\pi_\eta^X,\pi_\eta^Y\) & Marginal laws of \(\pi_\eta\); in
particular, \(\pi_\eta^X=\mu\). &
\(\pi_\eta^{X\mid Y=y}\) & Conditional target sampled by the restricted
Gaussian oracle. \\
\(U_\eta,V_y\) & \(Y\)-marginal potential and conditional potential,
respectively. &
\(\prox_{\eta V}\) & Proximal map of \(V\). \\
\midrule
\multicolumn{4}{@{}l}{\emph{Algorithmic quantities}} \\*
\(\varphi=\cN(0,I)\) & Standard Gaussian momentum law. &
\(p\) & Temporary momentum. \\
\(\widetilde X\) & Conditional reference point. &
\(\rho\) & Conditional resampling probability. \\
\(R_h\) & Specular reflection with normal \(h\); \(R_0=I\). &
\(N_{\nabla V}\) & Total number of gradient queries. \\
\midrule
\multicolumn{4}{@{}l}{\emph{Kernels, operators, and function spaces}} \\*
\(\mathcal H_y,\opH_y\) & Conditional half-turn kernel and its action on
observables for fixed \(y\). &
\(\opH\) & Half-turn operator lifted to the augmented state space. \\
\(\mathcal K,\opK\) & Full-chain transition kernel and its induced operator
on observables. &
\(\opU\) & Reflection \((x,y)\mapsto(x,2x-y)\) and its pullback on
observables. \\
\(\opPi_p\) & Averaging over the Gaussian momentum. &
\(\opP,\opP^\perp\) & Conditional expectation \(\opP f=\E[f\mid Y]\)
and its orthogonal complement. \\
\(\cH,\cH_{\mathsf P},\cH_\perp\) & The space \(L^2(\pi_\eta)\) and its
macroscopic and microscopic subspaces. &
\(\mathsf T_{\perp\mathsf P}\) & Operator block
\(\opP^\perp\mathsf T\opP\); subscripts give output then input. \\
\pagebreak
\multicolumn{4}{@{}l}{\emph{Laws and divergences}} \\*
\(\mathcal L(W),P^X\) & Law of \(W\) and \(X\)-marginal of \(P\). &
\(P\mathcal K,\mathcal K^m\) & Action of a kernel on a law and its
\(m\)-fold iterate. \\
\(D_s,\chi^2,\TV\) & R\'enyi, chi-squared, and total variation
divergences. &
\(\E,\Prob,\Var,\Cov\) & Expectation, probability, variance, and
covariance. \\
\midrule
\multicolumn{4}{@{}l}{\emph{Constants and conventions}} \\*
\([u]_+\) & Positive part \(\max\{u,0\}\). &
\(c,C\) & Universal constants; values may change from line to line. \\
\(K\) & Result-specific constants, fixed where introduced. &
\(\widetilde O(B)\) & \(B\) times a fixed power of logarithms in the
problem parameters. \\
\end{longtable}
\endgroup

\section{The Proximal Bouncy Particle Sampler}
\label{sec:algorithm}

\subsection{Overview of the construction}
\label{sec:algorithm-overview}

We first describe the Proximal BPS at a high level. \Cref{fig:algorithm} gives a geometric overview.
\begin{figure}[ht]
\centering
\includegraphics[width=0.82\textwidth]{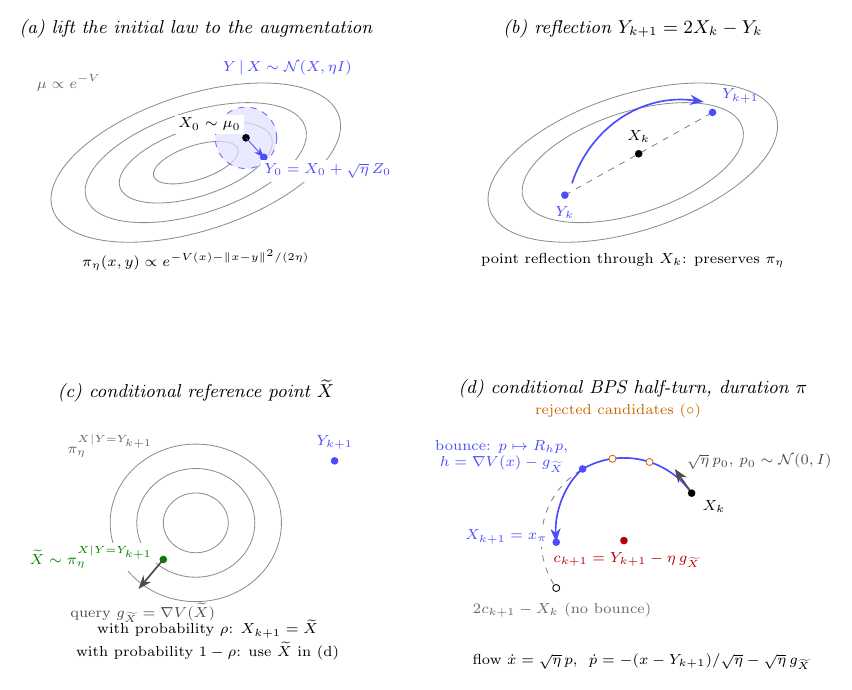}
\caption{Geometric overview of Proximal BPS.
(a) The initial law is lifted to the augmented distribution.  At each
iteration, (b) the auxiliary point is reflected through the current
position, and (c) a conditional reference point is drawn.  With
probability \(\rho\), that point becomes the next position; with
probability \(1-\rho\), it determines the equilibrium position
\(c=Y^+-\eta\nabla V(\widetilde X)\) of the deterministic harmonic flow
for the time-\(\pi\) BPS half-turn process in (d), which starts from the current
position.}
\label{fig:algorithm}
\end{figure}

\paragraph{Baseline Gibbs update and reflection of the auxiliary variable.}

The usual proximal sampler for the augmentation
\eqref{eq:augmentation} is the two-block Gibbs update
\[
 Y^+\sim\cN(x,\eta I)\,,
 \qquad
 X^+\sim\pi_\eta^{X\mid Y=Y^+}\,.
\]
Each update discards the previous value of the coordinate it changes:
the first adds fresh Gaussian noise to \(x\), while the second makes an
independent conditional draw using a restricted Gaussian oracle (RGO).
The latter conditional law is local and well-conditioned by
\eqref{eq:conditional-curvature}, which makes the RGO inexpensive for
small \(\eta\).  The same localization, however, generally requires more
iterations, so \(\eta\) balances the cost of each conditional
update against the number of iterations.  This construction and its RGO
implementation are studied in
\cite{fors,chen-chewi-salim-wibisono,fan-yuan-chen,lee-shen-tian}.

Proximal BPS preserves the same augmented target but replaces this
two-block Gibbs transition with one that retains dependence on the
preceding augmented state.  This memory plays the role of momentum and
underlies the kinetic acceleration established below.  For the
\(Y\)-update, the fresh Gaussian draw is replaced by the deterministic
reflection (see Figure~\ref{fig:algorithm}(b))
\[
 y^+=2x-y\,,
 \qquad
 y^+-x=-(y-x)\,.
\]
This map preserves \(\pi_\eta\), because it negates the centered Gaussian
residual \(Y-X\mid X\) without changing its conditional law. Consequently, the residual from the current augmented
state is carried into the next position update rather than discarded and
replaced by fresh Gaussian noise.  The \(X\)-update described next also
retains dependence on the current position along the dominant branch,
which runs the half-turn process; only occasional conditional resampling
discards this memory.

\paragraph{Conditional reference point and occasional resampling.}

After the reflection sets \(Y=y^+\), Proximal BPS updates \(X\) using a
mixture of two kernels that preserve
\(\pi_\eta^{X\mid Y=y^+}\): independent conditional resampling and the
kernel induced by the conditional half-turn process defined below. Draw
\[
 \widetilde X\sim\pi_\eta^{X\mid Y=y^+}\,,
 \qquad
 X^+=
 \begin{cases}
  \widetilde X\,,
    & \text{with probability }\rho
      \quad\text{(conditional resampling)}\,,\\
  x_\pi\,,
    & \text{with probability }1-\rho
      \quad\text{(half-turn process, defined below)}\,.
 \end{cases}
\]
Choosing \(\rho\)
small makes the half-turn process the dominant option,
while conditional resampling prevents periodicity.
When conditional resampling is selected, the RGO output \(\widetilde X\)
is the next position. On the dominant branch, the process uses
\(\widetilde X\) as an auxiliary reference point, and its output \(x_\pi\)
remains dependent on the current position.
\Cref{thm:ideal-convergence} shows that composing this update with the
reflection \(y^+=2x-y\) accelerates convergence and thus reduces the
iteration complexity, as the
updates carry information across iterations.

\paragraph{The conditional half-turn process.}

This process updates \(X\) through a kernel that preserves
\(\pi_\eta^{X\mid Y=y^+}\) while retaining dependence on the current
position, rather than replacing \(x\) by an independent conditional
draw.  For this purpose, we construct a stochastic process on phase
space \((x,p)\) with invariant law
\(\pi_\eta^{X\mid Y=y^+}\otimes\cN(0,I)\).  Each run starts from
\(x_0=x\) and an independent \(p_0\sim\cN(0,I)\).  This momentum is
temporary: it is discarded at the end of the process, whereas the memory
across iterations remains in \((X,Y)\).

At fixed \(Y=y^+\), vanilla BPS would use straight-line motion and bounce against the full gradient of
\(V(\cdot)+\|{\cdot}-y^+\|^2/(2\eta)\).  Although exact, this does not exploit the fact that we
know the quadratic coupling potential exactly, which dominates \(V\) when \(\eta\) is
small.  We thus incorporate this quadratic term into the deterministic
motion between bounce events, which account for \(V\).

A direct version of this idea would still bounce against the full
gradient \(\nabla V(x)\), whose magnitude need not be small even inside
the conditional region localized near $y^+$.
This is where the conditional reference point \(\widetilde X\) is used.
The fixed gradient \(\nabla V(\widetilde X)\) is incorporated into the
deterministic motion, whose equilibrium position is
\(c\deq y^+-\eta\nabla V(\widetilde X)\).  Bounces handle only
\(\nabla V(x)-\nabla V(\widetilde X)\), which makes the bounce rate small
due to smoothness of the potential.

Ignoring bounces, the harmonic motion traces a periodic orbit around
\(c\) and returns to its initial point after time \(2\pi\).  By a
\emph{half-turn} we mean stopping halfway around this orbit, at time
\(\pi\).  See Figure~\ref{fig:algorithm}(d). This midpoint is chosen to preserve memory.  The temporary
momentum is needed to define invariant phase space dynamics, but it
should not replace the information carried by the current position.
If we stop after a quarter-period, the new position
would be determined by the freshly drawn momentum rather than by
\(x_0\).  Continuing through a full period would undo the move and
return to \(x_0\).  The half-period is the first non-trivial stopping time
that avoids both failures: the fresh momentum no longer contributes to
the position, and the displacement from \(c\) is retained with its
direction reversed.  Thus the no-bounce map is
\[
 (x_\pi,p_\pi)=(2c-x_0,-p_0)\,.
\]
The BPS bounces correct for \(V\) along the trajectory, but the process
keeps this time-\(\pi\) horizon so that its dominant harmonic motion
retains the same memory-preserving geometry.

\medskip

The rest of this section formalizes the construction.  We reserve
\(t\in[0,\pi]\) for the phase space process \((x_t,p_t)\) during one run
of the half-turn process
and \(k\) for the Markov chain \((X_k,Y_k)\).
\Cref{sec:ideal-half-turn} constructs the conditional half-turn process
and proves reversibility of its induced kernel;
\cref{sec:ideal-proximal-bps} defines the full ideal
transition. The practical gradient-only
implementation is discussed in \cref{sec:implementation}.

\subsection{The ideal conditional half-turn process}
\label{sec:ideal-half-turn}

Fix \(Y=y\).  Recall from \eqref{eq:conditional-prox-law} that the position
\(X\) to be sampled has conditional law
\[
 \pi_\eta^{X\mid Y=y}(\dd x)
 \propto
 e^{-V_y(x)}\,\dd x\,,
 \qquad
 V_y(x)\deq V(x)+\frac{\|x-y\|^2}{2\eta}\,.
\]
We construct the half-turn process so that its induced transition
preserves this law.  The
construction starts from vanilla BPS for the conditional target, absorbs
the quadratic coupling potential into the deterministic motion, and then
introduces
an independent conditional reference point to localize the remaining
bounce rate.  We begin by introducing a Gaussian momentum
\(p\sim\cN(0,I)\).  The phase space law to be preserved is
\begin{equation}
 \nu_{\eta,y}(\dd x\,\dd p)
 \deq\pi_\eta^{X\mid Y=y}(\dd x)\,\cN(0,I)(\dd p)
 \propto
 \exp \Bigl\{-V_y(x)-\frac{\|p\|^2}{2}\Bigr\}\,\dd x\,\dd p\,.
 \label{eq:conditional-position-momentum-measure}
\end{equation}
Recall from \eqref{eq:specular} that \(R_h\) denotes specular
reflection across the hyperplane orthogonal to \(h\).

A natural first attempt is to apply vanilla BPS directly to this
conditional distribution.  Its infinitesimal generator acts on a smooth
test function \(f\) as
\begin{equation}
 \begin{split}
 (\opL_y^{\rm vanilla}f)(x,p)
 &=\sqrt\eta\,p^\top\nabla_xf(x,p)\\
 &\quad+\sqrt\eta\,[p^\top\nabla V_y(x)]_+
 \bigl\{f(x,R_{\nabla V_y(x)}p)-f(x,p)\bigr\}\,.
 \end{split}
 \label{eq:vanilla-conditional-bps-generator}
\end{equation}
The common factor \(\sqrt\eta\) only fixes the unit of time; it does not
change the invariant law.  We choose this unit so that the harmonic motion
introduced below has period \(2\pi\).
The first term gives straight-line motion, while the second reflects the
momentum against the full conditional gradient.  This process is exact,
but it can be expensive: even when \(V=0\), the quadratic coupling potential
\(\|{\cdot}-y\|^2/(2\eta)\) produces a stationary mean bounce rate of order
\(\sqrt d\).

To reduce the number of bounce events, we put the quadratic part of the
conditional gradient into the deterministic dynamics.  This gives the
generator
\begin{equation}
 \begin{split}
 (\opL_y^{\rm harm}f)(x,p)
 &=\sqrt\eta\,p^\top\nabla_xf(x,p)
   -\frac{(x-y)^\top}{\sqrt\eta}\nabla_pf(x,p)\\
 &\quad+\sqrt\eta\,[p^\top\nabla V(x)]_+
 \bigl\{f(x,R_{\nabla V(x)}p)-f(x,p)\bigr\}\,.
 \end{split}
 \label{eq:direct-harmonic-bps-generator}
\end{equation}
The first line is the Hamiltonian flow associated with the quadratic
coupling potential: between bounces, it rotates \((x-y)/\sqrt\eta\) and
\(p\).  This use of a quadratic reference flow, with the remaining force
handled by bounce events, is analogous to the Boomerang
sampler~\cite{boomerang}.
The direct harmonic generator \eqref{eq:direct-harmonic-bps-generator}
still bounces against the full gradient \(\nabla V\), so its bounce rate
can remain large.

To further reduce the bounce rate and the resulting computational
cost, we introduce a conditional reference point and use the smoothness of
\(V\) to control the local gradient difference.  Specifically, draw
\(\widetilde X\sim\pi_\eta^{X\mid Y=y}\) independently and write the exact
decomposition
\[
 \nabla V(x)
 =\nabla V(\widetilde X)
  +\{\nabla V(x)-\nabla V(\widetilde X)\}\,.
\]
Conditionally on \(\widetilde X=\widetilde x\), we incorporate the fixed
reference term into the deterministic dynamics and retain only the gradient
difference in the BPS jump term.  Define the equilibrium position of this
deterministic harmonic flow and the residual gradient by
\begin{equation}
 c_{y,\widetilde x}\deq y-\eta\nabla V(\widetilde x)\,,
 \qquad
 h_{\widetilde x}(x)\deq\nabla V(x)-\nabla V(\widetilde x)\,.
 \label{eq:original-coordinate-equilibrium-residual}
\end{equation}
The resulting generator is
\begin{equation}
 \begin{split}
 \opL_{y,\widetilde x}f(x,p)
 &\deq\sqrt\eta\,p^\top\nabla_xf(x,p)
 -\frac{(x-c_{y,\widetilde x})^\top}{\sqrt\eta}\nabla_pf(x,p)\\
 &\quad+\sqrt\eta\,[p^\top h_{\widetilde x}(x)]_+
 \bigl\{f(x,R_{h_{\widetilde x}(x)}p)-f(x,p)\bigr\}\,.
 \end{split}
 \label{eq:fiber-generator}
\end{equation}
The harmonic and residual terms sum to the full conditional force
\(\sqrt\eta\,\nabla V_y(x)\).  Thus \(\widetilde x\) changes only how this
force is divided between deterministic motion and bounces.  By the
\(\beta\)-smoothness of \(V\),
\[
 \|h_{\widetilde x}(x)\| = \|\nabla V(x)-\nabla V(\widetilde x)\|
 \le\beta\,\|x-\widetilde x\|\,.
\]
Drawing the reference point from the same localized conditional law keeps
the residual gradient small and reduces the expected number of bounces per
run of the half-turn process to order \(\beta\eta\sqrt d\); see
\cref{prop:ideal-bounce-count}.

Having specified the dynamics, it remains to choose the integration
horizon.  For fixed \((y,\widetilde x)\), the deterministic part of
\eqref{eq:fiber-generator} is harmonic motion about its equilibrium
position \(c_{y,\widetilde x}\).  In the absence of bounces, its position at
time \(t\) satisfies
\begin{equation}
 x_t-c_{y,\widetilde x}
 =(x_0-c_{y,\widetilde x})\cos t
  +\sqrt\eta\,p_0\sin t\,.
 \label{eq:no-bounce-harmonic-position}
\end{equation}
As discussed earlier, we choose the integration horizon \(t=\pi\), corresponding to one
half-period.  This is the first non-zero harmonic time at which the
position \(x_\pi\) no longer depends on the initial momentum \(p_0\).
Without bounces, the resulting map is
\begin{equation}
 (x_\pi,p_\pi)
 =(2c_{y,\widetilde x}-x_0,-p_0)\,.
 \label{eq:no-bounce-half-period-map}
\end{equation}
Thus, the position is reflected through the equilibrium position
\(c_{y,\widetilde x}\) and retains its dependence on \(x_0\).  Bounce
events correct for the residual force along the trajectory, while the
fixed horizon \(t=\pi\) retains the memory-preserving structure of the
half-turn process.

The construction is now complete.  The following algorithm gives its
pathwise prescription: it draws the reference point and momentum, evolves
the process with generator \eqref{eq:fiber-generator} for time \(\pi\), and
returns only the final position.

\begin{algorithm}[H]
\caption{Simulation of the ideal conditional half-turn process}
\label{alg:ideal-half}
\begin{algorithmic}[1]
\Statex \textbf{Input:} state \((x,y)\)
\State Draw \(\widetilde X\sim\pi_\eta^{X\mid Y=y}\) and
       \(p_0\sim\cN(0,I)\) independently.
\State Query \(\nabla V(\widetilde X)\).
\State Set \(x_0\gets x\).
\State Evolve \((x_t,p_t)\) according to
\begin{equation}
 \dot x=\sqrt\eta\,p\,,
 \qquad
 \dot p=-\frac{x-y}{\sqrt\eta}
 -\sqrt\eta\,\nabla V(\widetilde X)\,,
 \label{eq:x-harmonic-flow}
\end{equation}
until \(t=\pi\).
\State During the evolution, generate bounce events at rate
\begin{equation}
 \lambda_{\widetilde X}(x,p)
 =\sqrt\eta\,\bigl[p^\top[\nabla V(x)-\nabla V(\widetilde X)]\bigr]_+\,.
 \label{eq:x-bounce-rate}
\end{equation}
\State At each bounce, set
       \(p_t\gets
       R_{\nabla V(x_t)-\nabla V(\widetilde X)}p_t\).
\State \Return \(x_\pi\)
\end{algorithmic}
\end{algorithm}

\begin{proposition}[Well-posedness and stationarity of the conditional phase space process]
\label{prop:fiber-construction}
For every fixed \(y,\widetilde x\in\R^d\) and initial phase space point
\((x_0,p_0)\), the event-driven dynamics in \cref{alg:ideal-half}, with
the reference point fixed at \(\widetilde x\), defines a unique
non-explosive, time-homogeneous Markov process.  The phase space measure
\(\nu_{\eta,y}\) is stationary for this process.  In particular, the
returned position \(x_\pi\) is well-defined almost surely.
\end{proposition}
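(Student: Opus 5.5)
We will build the process pathwise and then check stationarity through its generator. With \(y,\widetilde x\) fixed, write \(c=c_{y,\widetilde x}\) and \(h=h_{\widetilde x}\).

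\textbf{Construction.} Between events the flow \eqref{eq:x-harmonic-flow} is linear, hence globally defined. Its explicit solution rotates the pair \(((x-c)/\sqrt\eta,\,p)\), and it preserves the energy
\[
E=\|x-c\|^2/(2\eta)+\|p\|^2/2 .
\]
A bounce \(p\mapsto R_{h(x)}p\) preserves \(\|p\|\) and leaves \(x\) unchanged, so \(E\) is conserved along the entire piecewise-deterministic path. The path therefore stays in the compact energy shell \(\{E\le E_0\}\), where \(E_0\) is the energy at \((x_0,p_0)\).

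On this shell the bounce rate is uniformly bounded:
\[
\lambda\le\sqrt\eta\,\|p\|\,\beta\|x-\widetilde x\|\le\bar\lambda(E_0)<\infty .
\]
We construct the process by the standard Davis recipe. Draw successive event times by thinning a homogeneous Poisson clock of rate \(\bar\lambda\), exactly as in \cref{sec:standard-bps-prelim}, and apply the reflection at each accepted time. On \([0,\pi]\) the number of candidate times is Poisson\((\bar\lambda\pi)\), hence finite almost surely. This gives non-explosion, so \(x_\pi\) is well-defined.

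The law of the constructed path does not depend on \(\bar\lambda\) and is determined by the rate and jump kernel. This yields uniqueness and the strong Markov, time-homogeneous property, by Davis' theory of PDMPs with bounded rates on invariant sets.

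\textbf{Stationarity.} We show \(\int \opL_{y,\widetilde x}f\,\dd\nu_{\eta,y}=0\) for all \(f\) in a core, say \(C_c^1\) functions. Split the generator into its transport and jump parts.

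\emph{Transport part.} The Liouville vector field
\[
b=\Bigl(\sqrt\eta\,p,\;-(x-c)/\sqrt\eta\Bigr)
\]
is divergence-free. Integrating by parts against the density \(e^{-V_y(x)-\|p\|^2/2}\) gives
\[
\int b\cdot\nabla f\,\dd\nu=\int f\,\bigl(b\cdot\nabla(V_y+\|p\|^2/2)\bigr)\,\dd\nu .
\]
Since \(\nabla V_y=\nabla V+(x-y)/\eta\) and \(x-c=x-y+\eta\nabla V(\widetilde x)\), the integrand becomes
\[
\sqrt\eta\,p^\top\bigl(\nabla V(x)+(x-y)/\eta\bigr)-p^\top(x-c)/\sqrt\eta=\sqrt\eta\,p^\top h(x).
\]
So the transport part contributes \(\int f\,\sqrt\eta\,p^\top h\,\dd\nu\).

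\emph{Jump part.} For fixed \(x\), the map \(p\mapsto R_hp\) is an orthogonal involution that preserves \(\varphi\) and sends \(p^\top h\) to \(-p^\top h\). Changing variables therefore gives
\[
\int [p^\top h]_+\,f(x,R_hp)\,\varphi(\dd p)=\int [p^\top h]_-\,f(x,p)\,\varphi(\dd p).
\]
The jump term then integrates to
\[
\int f\,\sqrt\eta\,\bigl([p^\top h]_- - [p^\top h]_+\bigr)\,\dd\nu=-\int f\,\sqrt\eta\,p^\top h\,\dd\nu .
\]
This exactly cancels the transport contribution.

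\textbf{Main obstacle.} The delicate step is passing from \(\int\opL f\,\dd\nu=0\) on a core to invariance of \(\nu_{\eta,y}\) under the semigroup. We would proceed as follows.

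First, by energy conservation the process started in \(\{E\le M\}\) stays there. Moreover, \(\nu\) restricted to energy shells is compatible with the dynamics.

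Second, on each compact invariant set the rates are bounded, so the Kolmogorov forward equation holds for \(C^1\) test functions. Concretely, \(\frac{\dd}{\dd t}\int P_tf\,\dd\nu=\int \opL P_tf\,\dd\nu\), which requires \(P_tf\) to remain in the domain. This can be checked via the Duhamel/Dynkin formula with bounded jump rates.

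If regularity of \(P_tf\) is awkward, we would instead argue pathwise. Take the bounded-rate thinning representation, which is a constant-rate Poisson clock with a jump kernel that includes null jumps. Invariance then follows once (a) the flow preserves the tilted measure up to the compensating factor computed above, and (b) the thinned jump kernel \(Q\), with
\[
Q\bigl((x,p),\cdot\bigr)=\tfrac{\lambda}{\bar\lambda}\,\delta_{R_hp}+\bigl(1-\tfrac{\lambda}{\bar\lambda}\bigr)\,\delta_p ,
\]
satisfies the discrete balance \(\nu Q=\nu\) together with the flow's continuity equation. This is precisely the computation above, done at the level of measures.

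Finally, we apply a truncation in \(M\) and let \(M\to\infty\) to conclude that \(\nu_{\eta,y}\) is stationary.
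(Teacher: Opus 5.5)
\textbf{Well-posedness.} Your construction is correct and is essentially the paper's. Energy conservation confines the path to a compact shell, the rate is bounded there, and so only finitely many events occur on bounded intervals. Thinning a dominating Poisson clock instead of using the integrated-hazard recursion makes no difference.

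\textbf{Stationarity: the gap.} Your identity $\int\opL_{y,\widetilde x}f\,\dd\nu_{\eta,y}=0$ for $f\in C_c^1$ is correct. However, it is exactly the formal computation the paper calls heuristic. The real content is the passage to invariance under $\mathsf T_t^{y,\widetilde x}$, which you defer as the ``main obstacle'' and do not carry out. Neither of your sketches closes it.
\begin{itemize}
\item Your first sketch needs $\int\opL g\,\dd\nu_{\eta,y}=0$ for $g=\mathsf T_tf$. But $\mathsf T_tf$ need not be $C^1$: the bounce map $\opS_{\widetilde x}$ is discontinuous where $h_{\widetilde x}(x)=0$, and the rate has a kink where $p^\top h_{\widetilde x}(x)=0$. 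You give neither a core property for $C_c^1$ nor an extension of the identity to a class containing $\mathsf T_tf$.
\item Your second sketch rests on $\nu_{\eta,y}Q=\nu_{\eta,y}$, which is false. The change of variables from your own jump computation gives $\nu_{\eta,y}Q-\nu_{\eta,y}=-\bar\lambda^{-1}\sqrt\eta\,(p^\top h_{\widetilde x}(x))\,\nu_{\eta,y}$. This is nonzero, since strong convexity gives $h_{\widetilde x}(x)\neq0$ for $x\neq\widetilde x$. Only its sum with the flow's drift cancels. Turning that infinitesimal cancellation into invariance of the uniformized dynamics is the same generator-to-semigroup problem again.
\end{itemize}

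\textbf{How to close it along your route.} Extend the identity beyond $C_c^1$, working on a fixed energy shell where the rate is bounded.
\begin{itemize}
\item Davis' theory places the generator domain, which contains $\mathsf T_tf$ for $f\in C_c^1$, among functions absolutely continuous along flow lines.
\item Your transport computation only uses that the harmonic flow preserves Lebesgue measure. So it extends to such $g$ by differentiating $s\mapsto\int g\circ\Phi_s^{y,\widetilde x}\,\dd\nu_{\eta,y}$.
\item Your jump computation already holds for every bounded measurable $g$.
\end{itemize}

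\textbf{The paper's route.} The paper bypasses generators. It writes the path law as the series \eqref{eq:complete-path-density} over ordered jump times. It then shows that the initial density times the survival factor, and each jump-rate factor, are matched under the reversal $(\zeta_s)\mapsto(\opR\zeta_{t-s})$. Finally, it checks that this reversal is a measure-preserving bijection despite the discontinuity of $\opS_{\widetilde x}$. This also yields the stronger identity \eqref{eq:skew-detailed-balance-semigroup}, which the paper needs for reversibility of $\mathcal H_y$. A completed generator argument gives invariance only.
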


The well-posedness and stationarity assertions are proved in
\cref{app:fixed-reference-process}.

The induced Markov kernel on positions also satisfies the following
reversibility property.

\begin{proposition}[Reversibility of the conditional half-turn kernel]
\label{prop:half-invariance}
For every \(y\in\R^d\), let \(\mathcal H_y\) be the Markov kernel on
\(\R^d\) induced by \cref{alg:ideal-half}; thus
\(\mathcal H_y(x,\cdot)\) is the law of the returned position \(x_\pi\)
for input \((x,y)\).  Then \(\mathcal H_y\) is reversible with respect to
\(\pi_\eta^{X\mid Y=y}\).
\end{proposition}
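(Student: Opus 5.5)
The plan is to first prove reversibility for each fixed reference point, and then average over \(\widetilde x\). Fix \(y\) and \(\widetilde x\), and let \(\mathcal H_{y,\widetilde x}(x,\cdot)\) be the law of \(x_\pi\) when the dynamics with generator \eqref{eq:fiber-generator} starts from \((x,p_0)\) with \(p_0\sim\cN(0,I)\). Then
\[
 \mathcal H_y(x,\cdot)=\int\mathcal H_{y,\widetilde x}(x,\cdot)\,\pi_\eta^{X\mid Y=y}(\dd\widetilde x)\,.
\]
The mixing weights do not depend on \(x\), because \(\widetilde X\) is drawn independently of the current position. A mixture of kernels that are each reversible with respect to \(\pi_\eta^{X\mid Y=y}\) is therefore again reversible, and it suffices to treat fixed \(\widetilde x\).

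For fixed \(\widetilde x\), I will prove a time-reversal (skew-detailed-balance) identity for the phase-space process, with momentum flip \(\mathsf F(x,p)=(x,-p)\). Let \(\mathsf P_t\) be its semigroup. The claim is that for all bounded \(f,g\),
\[
 \int f\,\mathsf P_t(g\circ\mathsf F)\,\dd\nu_{\eta,y}=\int (f\circ\mathsf F)\,\mathsf P_t g\,\dd\nu_{\eta,y}\,.
\]
This is the standard property that \(\mathsf F\)-conjugation turns the process into its \(\nu_{\eta,y}\)-adjoint.

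To verify it at the generator level, I split \(\opL_{y,\widetilde x}\) into its two parts. The deterministic part is the Hamiltonian vector field for \(H(x,p)=\|x-c\|^2/(2\eta)+\|p\|^2/2\), with \(c=c_{y,\widetilde x}\). This field is divergence-free and reversible under \(\mathsf F\). Although it preserves \(e^{-H}\) rather than \(\nu_{\eta,y}\), the density ratio between the two measures is \(e^{-V(x)+x^\top\nabla V(\widetilde x)}\) times a constant, and this factor produces exactly the drift term \(-\sqrt\eta\,p^\top h_{\widetilde x}(x)\) in the adjoint.

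For the bounce part, I use that \(R_h\) is an isometric involution, so the reflected momentum has the same Gaussian weight. Combining this with the identity
\[
 [p^\top h]_+-[-p^\top h]_+=p^\top h
\]
and \((R_hp)^\top h=-p^\top h\), the adjoint of the jump operator equals \(\mathsf F\) composed with the jump operator composed with \(\mathsf F\), plus the correction \(+\sqrt\eta\,p^\top h\). This correction cancels the drift term from the deterministic part. The computation is the BPS/Boomerang one from \cite{bps-original,boomerang,adnr}, and the result is
\[
 \opL^*=\mathsf F\,\opL\,\mathsf F\quad\text{in }L^2(\nu_{\eta,y})\,.
\]

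Passing from the generator to the semigroup will use the well-posedness and stationarity from \cref{prop:fiber-construction}. Concretely, I would show \(\mathsf P_t^*=\mathsf F\mathsf P_t\mathsf F\) either via a core argument or via the pathwise construction: reverse a trajectory in time and flip its momenta. Along each piece, the harmonic flow reverses, and bounce rates transform as \([p^\top h]_+\leftrightarrow[-p^\top h]_+\). The survival probabilities then contribute the factor \(\exp\{-\int p^\top h\}\), which equals the ratio \(e^{-V(x_t)+V(x_0)}\) adjusted by the linear term in \(\nabla V(\widetilde x)\). This pathwise route is the step I expect to be the main technical obstacle, since it requires careful handling of the non-explosion and domain issues.

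With \(\mathsf P_\pi^*=\mathsf F\mathsf P_\pi\mathsf F\) in hand, take \(f(x,p)=a(x)\) and \(g(x,p)=b(x)\), both of which are \(\mathsf F\)-invariant. Then
\[
 \int a\,\mathsf P_\pi b\,\dd\nu_{\eta,y}=\int b\,\mathsf P_\pi a\,\dd\nu_{\eta,y}\,.
\]
Since \(\nu_{\eta,y}=\pi_\eta^{X\mid Y=y}\otimes\cN(0,I)\) with Gaussian momentum, integrating over \(p\) gives
\[
 \int a\,\mathcal H_{y,\widetilde x}b\,\dd\pi_\eta^{X\mid Y=y}=\int b\,\mathcal H_{y,\widetilde x}a\,\dd\pi_\eta^{X\mid Y=y}\,,
\]
which is reversibility for fixed \(\widetilde x\). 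The mixture step then completes the proof.
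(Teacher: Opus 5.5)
Your proposal is correct and is essentially the paper's proof: the paper proves the fixed-reference identity $(\mathsf T_t^{y,\widetilde x})^*=\opR\,\mathsf T_t^{y,\widetilde x}\opR$ on $L^2(\nu_{\eta,y})$ by the same pathwise time-reversal you sketch (it treats the generator computation only as a heuristic, because the bounce map can be discontinuous where $h_{\widetilde x}(x)=0$), then removes $\opR$ by Gaussian momentum averaging, and finally averages over the independently drawn reference point. One slip to fix: your first display, $\int f\,\mathsf P_t(g\circ\mathsf F)\,\dd\nu_{\eta,y}=\int(f\circ\mathsf F)\,\mathsf P_tg\,\dd\nu_{\eta,y}$, actually encodes the commutation $\mathsf P_t\mathsf F=\mathsf F\mathsf P_t$ rather than $\mathsf P_t^*=\mathsf F\mathsf P_t\mathsf F$ (and it is vacuous for $f=a(x)$, $g=b(x)$); the intended identity is $\int f\,\mathsf P_tg\,\dd\nu_{\eta,y}=\int(g\circ\mathsf F)\,\mathsf P_t(f\circ\mathsf F)\,\dd\nu_{\eta,y}$, which is the form your final step actually uses.
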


The proof is deferred to Appendix~\ref{app:proof-half-invariance}.

The following proposition quantifies the stationary bounce count of the
ideal half-turn process.

\begin{proposition}[Stationary bounce count]
\label{prop:ideal-bounce-count}
There exists a universal constant \(K_{\rm id}\ge1\) such that the
following holds.  Suppose that \((x_0,Y)\sim\pi_\eta\).  Conditional on
\(Y\), let \(\widetilde X\sim\pi_\eta^{X\mid Y}\) be independent of
\(x_0\), and let \(p_0\sim\cN(0,I)\) be independent of all position
variables.  If \(N_{\rm b}^{\rm id}\) denotes the number of bounce events
in the resulting ideal half-turn process of \cref{alg:ideal-half}, then
\begin{equation}
 \E N_{\rm b}^{\rm id}
 \le K_{\rm id}\beta\eta\sqrt d\,.
 \label{eq:ideal-bounce-count}
\end{equation}
\end{proposition}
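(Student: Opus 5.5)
The plan is to bound the expected number of bounces by the expected integrated bounce intensity and then use stationarity. For a point process whose stochastic intensity is \(\lambda_{\widetilde X}(x_t,p_t)\), the compensator identity gives
\[
 \E N_{\rm b}^{\rm id}
 =\E\int_0^\pi \lambda_{\widetilde X}(x_t,p_t)\,\dd t\,.
\]
This uses non-explosiveness from \cref{prop:fiber-construction}, together with a localization argument if needed.

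The key structural fact is as follows. Condition on \((Y,\widetilde X)=(y,\widetilde x)\). The initial phase-space law of \((x_0,p_0)\) is then exactly \(\nu_{\eta,y}\), because \(x_0\) is independent of \(\widetilde X\) given \(Y\) and has law \(\pi_\eta^{X\mid Y=y}\). By \cref{prop:fiber-construction}, \(\nu_{\eta,y}\) is stationary for the fixed-reference process. Hence \((x_t,p_t)\sim\nu_{\eta,y}\) for every \(t\in[0,\pi]\), still independently of \(\widetilde X\) given \(Y\). By Fubini,
\[
 \E N_{\rm b}^{\rm id}
 =\pi\sqrt\eta\,\E\bigl[p^\top(\nabla V(X)-\nabla V(\widetilde X))\bigr]_+\,,
\]
where, given \(Y\), the variables \(X\) and \(\widetilde X\) are i.i.d.\ from \(\pi_\eta^{X\mid Y}\), and \(p\sim\cN(0,I)\) is independent of both.

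Integrating over \(p\) first gives \(\E[p^\top h]_+=\|h\|/\sqrt{2\pi}\). Then \(\beta\)-smoothness gives \(\|h\|\le\beta\|X-\widetilde X\|\). By Jensen,
\[
 \E\|X-\widetilde X\|
 \le\bigl(\E\|X-\widetilde X\|^2\bigr)^{1/2}
 =\bigl(2\,\E\operatorname{tr}\Cov_{\pi_\eta^{X\mid Y}}(X)\bigr)^{1/2}\,.
\]
The covariance bound \eqref{eq:conditional-covariance-prelim} then yields \(\E\|X-\widetilde X\|\le\sqrt{2d\eta/(1+\alpha\eta)}\le\sqrt{2d\eta}\). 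Combining these estimates,
\[
 \E N_{\rm b}^{\rm id}\le \pi\sqrt\eta\cdot\tfrac{1}{\sqrt{2\pi}}\,\beta\sqrt{2d\eta}=\sqrt\pi\,\beta\eta\sqrt d\,,
\]
so \(K_{\rm id}=2\) suffices.

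The only delicate step is justifying the compensator identity: we need the counting process to have intensity \(\lambda\) along the piecewise-deterministic trajectory and finite expectation. I would handle this by truncating at the \(n\)-th bounce time, applying optional stopping to \(N_{t\wedge T_n}-\int_0^{t\wedge T_n}\lambda\,\dd s\), and then passing \(n\to\infty\) by monotone convergence, using non-explosion. The integrated intensity is finite by the computation above, since the stationary marginal bound does not depend on truncation.
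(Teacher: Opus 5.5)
Your proof is correct and follows the paper's route. Like the paper, you condition on \((Y,\widetilde X)\), use stationarity of \(\nu_{\eta,y}\) for the fixed-reference process (\cref{prop:fiber-semigroup-rigorous}) to see that the rate at every \(t\in[0,\pi]\) has the law of \(\lambda\) in \cref{lem:event-tail}, and integrate over \([0,\pi]\). The one difference is how you bound \(\E\lambda\): the paper takes it from the Gaussian-concentration tail estimate of \cref{lem:event-tail} with \(\ell=0\) in \eqref{eq:event-overflow}, which it needs anyway for the capped-rate analysis, whereas your direct computation via \(\E[p^\top h]_+=\|h\|/\sqrt{2\pi}\), \(\beta\)-smoothness, Jensen, and the covariance bound \eqref{eq:conditional-covariance-prelim} is more elementary and yields the explicit constant \(K_{\rm id}=2\).
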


The proof is deferred to
\hyperref[app:proof-ideal-bounce-count]{Appendix~\ref*{app:halfturn-process},
proof of \cref*{prop:ideal-bounce-count}}.

\subsection{Ideal Proximal BPS}
\label{sec:ideal-proximal-bps}

With the conditional half-turn kernel in hand, we can now define the ideal
Proximal BPS.  For a resampling probability \(\rho\in(0,1)\), specified in \cref{thm:ideal-convergence}, its
augmented-state transition kernel is
\begin{equation}
 \begin{split}
 \mathcal K\bigl((x,y),\dd x'\,\dd y'\bigr)
 &\deq
 \delta_{2x-y}(\dd y')
 \,\bigl[
  \rho\,\pi_\eta^{X\mid Y=y'}(\dd x')
  +(1-\rho)\,\mathcal H_{y'}(x,\dd x')
 \bigr]\,.
 \end{split}
 \label{eq:ideal-proximal-bps-kernel}
\end{equation}
The Dirac factor enforces \(y'=2x-y\); the two terms in brackets are
conditional resampling and the half-turn kernel.  Thus
\eqref{eq:ideal-proximal-bps-kernel} is the formal version of the update
displayed in \cref{sec:algorithm-overview}.  The following algorithm
gives an equivalent simulation.

\begin{algorithm}[H]
\caption{Ideal Proximal BPS}
\label{alg:ideal-main}
\begin{algorithmic}[1]
\Statex \textbf{Input:} \((X_0,Y_0)\), \(T\), and \(\rho\)
\For{\(k=0,\ldots,T-1\)}
  \State \(Y_{k+1}\gets2X_k-Y_k\)
  \State Draw \(B_k\sim\operatorname{Bernoulli}(\rho)\)
  \If{\(B_k=1\)}
    \State Draw
           \(X_{k+1}\sim\pi_\eta^{X\mid Y=Y_{k+1}}\) exactly
  \Else
    \State \(X_{k+1}\gets
           \Call{IdealHalfTurn}{X_k,Y_{k+1}}\)
           using \cref{alg:ideal-half}
  \EndIf
\EndFor
\State \Return \(X_T\)
\end{algorithmic}
\end{algorithm}

\begin{proposition}[Invariance of the ideal Proximal BPS]
\label{prop:ideal-invariance}
The transition kernel \(\mathcal K\) in
\eqref{eq:ideal-proximal-bps-kernel} preserves the augmented law \(\pi_\eta\).
\end{proposition}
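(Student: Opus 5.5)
The kernel \(\mathcal K\) in \eqref{eq:ideal-proximal-bps-kernel} is the composition \(\mathcal K=\mathcal U\,\mathcal M\) of two steps. The first is the deterministic reflection kernel \(\mathcal U((x,y),\cdot)=\delta_{(x,2x-y)}\). The second is the \(x\)-update kernel
\[
 \mathcal M\bigl((x,y'),\dd x''\,\dd y''\bigr)
 =\delta_{y'}(\dd y'')\bigl[\rho\,\pi_\eta^{X\mid Y=y'}(\dd x'')+(1-\rho)\,\mathcal H_{y'}(x,\dd x'')\bigr]\,,
\]
which leaves \(y'\) fixed and moves only the position. To verify the factorization, apply \(\mathcal U\) first, which sends \((x,y)\) to \((x,2x-y)\), and then \(\mathcal M\) with \(y'=2x-y\). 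This reproduces exactly \eqref{eq:ideal-proximal-bps-kernel}, including the fact that \(\mathcal H_{y'}\) starts from the current position \(x\). It therefore suffices to show \(\pi_\eta\mathcal U=\pi_\eta\) and \(\pi_\eta\mathcal M=\pi_\eta\).

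\textbf{The reflection step.} Invariance under \(\mathcal U\) is \cref{prop:augmentation}(iii). Since \(\mathcal U\) is the pushforward under the bijective map \(\opU\), \(\pi_\eta\mathcal U=\opU_\#\pi_\eta=\pi_\eta\). If a direct check is wanted: under the generative description \eqref{eq:augmentation-generative}, \(\opU\) maps \((X,X+\sqrt\eta Z)\) to \((X,X-\sqrt\eta Z)\), and \(-Z\overset{d}{=}Z\) independently of \(X\).

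\textbf{The position update.} Disintegrate \(\pi_\eta(\dd x\,\dd y)=\pi_\eta^Y(\dd y)\,\pi_\eta^{X\mid Y=y}(\dd x)\). Because \(\mathcal M\) fixes \(y\), it suffices to show that for each fixed \(y\) the position kernel \(\rho\,\pi_\eta^{X\mid Y=y}+(1-\rho)\mathcal H_y\) preserves \(\pi_\eta^{X\mid Y=y}\). The first term is an independent draw from that law, so it trivially preserves it. For the second term, \cref{prop:half-invariance} gives reversibility of \(\mathcal H_y\) with respect to \(\pi_\eta^{X\mid Y=y}\), and reversibility implies invariance. A convex combination of invariant kernels is invariant. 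Integrating against \(\pi_\eta^Y(\dd y)\), and using Fubini together with the measurability of \(y\mapsto\mathcal H_y(x,A)\), yields \(\pi_\eta\mathcal M=\pi_\eta\).

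\textbf{Main obstacle.} The only delicate point is the measurability just mentioned: we need \((x,y)\mapsto\mathcal H_y(x,A)\) to be jointly measurable, so that \(\mathcal M\) is a genuine Markov kernel on the augmented space. I would obtain this from the explicit pathwise construction in \cref{alg:ideal-half}. The returned position \(x_\pi\) is a measurable function of \((x,y,\widetilde X,p_0)\) and the driving Poisson randomness, well-defined almost surely by \cref{prop:fiber-construction}. In addition, the law of \(\widetilde X\), namely \(\pi_\eta^{X\mid Y=y}\), depends measurably on \(y\) through its explicit density. With this in hand, combining \(\pi_\eta\mathcal U=\pi_\eta\) and \(\pi_\eta\mathcal M=\pi_\eta\) gives \(\pi_\eta\mathcal K=\pi_\eta\mathcal U\mathcal M=\pi_\eta\).
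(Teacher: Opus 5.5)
Your proof is correct and follows essentially the same approach as the paper: reflection invariance from \cref{prop:augmentation}(iii), fiberwise invariance of the conditional mixture (trivially for resampling, and via the reversibility in \cref{prop:half-invariance} for the half-turn), then integration over \(y\) and composition. The joint measurability you flag as the delicate point is the one thing the paper's short proof leaves implicit; it establishes it separately in \cref{prop:joint-halfturn-operator}, by essentially the pathwise argument you sketch.
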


\begin{proof}
The reflection preserves \(\pi_\eta\) by \cref{prop:augmentation}.
At fixed \(y'\), the first term in brackets in
\eqref{eq:ideal-proximal-bps-kernel} preserves
\(\pi_\eta^{X\mid Y=y'}\) by definition, and the second does so by
\cref{prop:half-invariance}.  The conditional mixture, and hence its
composition with the reflection, preserves \(\pi_\eta\).
\end{proof}

The next theorem gives the quantitative convergence guarantee for
ideal Proximal BPS and also underlies the implementable guarantee
in \cref{sec:implementation}.  Its proof is in
Appendices~\ref{sec:micro-macro} and~\ref{app:key-estimates}.

\begin{theorem}[$\chi^2$ convergence]
\label{thm:ideal-convergence}
There exist universal constants
\(c_0,\rho_\star\in(0,1)\) and \(C\ge1\) such that
the following holds.  Assume \eqref{eq:SC-smooth}, and let
\(0<\eta\le c_0/\beta\).
Set
\begin{equation}
 \ell_\eta\deq\log\frac{e}{\alpha\eta}\,,
 \qquad
 \rho\deq\rho_\star\sqrt{\alpha\eta\,\ell_\eta}\in(0,1/2]\,,
 \label{eq:rho-choice}
\end{equation}
and let \(\mathcal K\) be the transition kernel in
\eqref{eq:ideal-proximal-bps-kernel} with this choice of \(\rho\).
Then, for all integers $N$ satisfying
\begin{equation}
 N\geq\frac{C}{\sqrt{\alpha\eta}}
 \sqrt{\log\frac{e}{\alpha\eta}}\,,
 \label{eq:N-main-bound}
\end{equation}
all probability measures \(\nu_0\) on
\(\R^d\times\R^d\) satisfying
\(\chi^2(\nu_0\|\pi_\eta)<\infty\), and all integers \(J\ge0\),
\begin{equation}
 \chi^2\bigl(\nu_0 \mathcal K^{JN}\|\pi_\eta\bigr)
 \le
 e^{-J}
 \chi^2(\nu_0\|\pi_\eta)\,\,.
 \label{eq:block-chi}
\end{equation}
Consequently, the \(X\)-marginal satisfies
\begin{equation}
 \TV\bigl((\nu_0 \mathcal K^{JN})^X,\mu\bigr)
 \le\frac12\sqrt{\chi^2(\nu_0\|\pi_\eta)}
 \exp \Bigl\{-\frac{J}{2}\Bigr\}\,.
 \label{eq:block-TV}
\end{equation}
\end{theorem}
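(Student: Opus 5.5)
We work in \(\cH=L^2(\pi_\eta)\) with the observable operator \(\opK f(x,y)=\E[f(X_1,Y_1)\mid (X_0,Y_0)=(x,y)]\), and prove a spectral-type contraction of the adjoint \(\opK^*\) on mean-zero functions. Writing \(h_k=\dd(\nu_0\mathcal K^k)/\dd\pi_\eta-1\), we have \(h_{k+1}=\opK^*h_k\) and \(\chi^2(\nu_0\mathcal K^k\|\pi_\eta)=\|h_k\|^2\). It therefore suffices to show \(\|(\opK^*)^N h\|^2\le e^{-1}\|h\|^2\) for \(h\perp 1\). Iterating \(J\) times gives \eqref{eq:block-chi}, and \eqref{eq:block-TV} follows from \eqref{eq:TV-chi-prelim} together with the data-processing inequality under the \(X\)-projection.

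\textbf{Structure of \(\opK\).} By \eqref{eq:ideal-proximal-bps-kernel}, \(\opK=\opU\,\opS\), where \(\opS=\rho\opP+(1-\rho)\opH\) acts fibrewise at fixed \(y\). Here \(\opP\) is the conditional expectation onto functions of \(Y\), and \(\opH\) is the lifted half-turn operator, self-adjoint by \cref{prop:half-invariance}. \(\opU\) is a unitary involution by \cref{prop:augmentation}(iii). Thus \(\opK^*=\opS\opU\), and \(\opS\) is self-adjoint with \(\opP\opS=\opS\opP=\opP\).

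We split \(\cH=\cH_{\mathsf P}\oplus\cH_\perp\) into a macroscopic part (functions of \(y\)) and a microscopic part. Two mechanisms drive the contraction:
\begin{enumerate}[label=(\alph*)]
\item \emph{Microscopic dissipation.} On \(\cH_\perp\), \(\opS\) contracts by at least a factor \(1-\rho\), since \(\opH\) is a self-adjoint contraction and \(\opP^\perp\opP=0\). This gives \(\|h\|^2-\|\opS h\|^2\gtrsim \rho\|\opP^\perp h\|^2\).
\item \emph{Macroscopic transport.} \(\opU\) maps functions of \(y\) to functions of \(2x-y\). For \(g=g(y)\), the block \(\opP\opU g\) is convolution against the law of \(2X-Y\) given \(Y\), i.e.\ a reflection of \(Y\) through roughly \(\prox_{\eta V}(Y)\) plus noise of variance about \(\eta\). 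In the Gaussian or linearized picture, \(Y\mapsto 2\prox_{\eta V}(Y)-Y\approx Y-2\eta\nabla U_\eta(Y)\), and the pair \((\text{position},\text{residual }X-Y)\) rotates like a discretized Hamiltonian with step size \(\sqrt{\eta}\) in the \(U_\eta\) landscape, whose curvature is at least \(\alpha/(1+\alpha\eta)\) by \eqref{eq:Y-curvature}.
\end{enumerate}

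\textbf{Modified norm (DMS).} Following Dolbeault--Mouhot--Schmeiser in the Fan--Li--Lu form, we introduce the modified norm
\[
\mathscr E(h)=\|h\|^2+2\epsilon\langle \mathsf A h,h\rangle,\qquad \mathsf A=(1+(\opUperpP)^*\opUperpP)^{-1}(\opUperpP)^*,
\]
with the block notation of the table. The coercivity input is the macroscopic Poincaré inequality for \(\pi_\eta^Y\), which has constant of order \(1/\alpha\), combined with the observation that \(\opUperpP g\) has norm squared of order \(\eta\,\E\|\nabla g\|^2\). Indeed, \(g(2X-Y)-\E[\cdot\mid Y]\) is governed by the conditional Poincaré inequality \eqref{eq:conditional-poincare-prelim}, and the relevant ratio is \(\alpha\eta\).

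The target is the one-step estimate
\[
\mathscr E(\opK^* h)\le \mathscr E(h)-c\sqrt{\alpha\eta/\ell_\eta}\,\|h\|^2 .
\]
The rate \(\sqrt{\alpha\eta}\) rather than \(\alpha\eta\) is exactly the acceleration. It requires choosing \(\rho\asymp\sqrt{\alpha\eta\,\ell_\eta}\), which balances the microscopic damping against the macroscopic oscillation frequency \(\sqrt{\alpha\eta}\), just as critical damping in underdamped Langevin. Since \(\mathscr E\) is equivalent to \(\|\cdot\|^2\) for small \(\epsilon\), iterating over \(N\gtrsim\sqrt{\ell_\eta/(\alpha\eta)}\) steps gives the factor \(e^{-1}\).

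\textbf{Main obstacle.} The hard part is the set of key estimates bounding the cross terms. The operator \(\opHperpperp\) is not a small perturbation of the identity. Its no-bounce part is the reflection \(x\mapsto 2c-x\), so \(\opH\) acts on \(\cH_\perp\) roughly like another reflection, and the composition \(\opU\opH\) must preserve momentum-like information across steps rather than scramble it.

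I would first treat the Gaussian case \(V=\tfrac\alpha2\|x\|^2\) explicitly. There \(\opH\) is essentially a deterministic reflection and the chain is linear, so the rotation-plus-damping structure can be read off and the Lyapunov functional tuned by hand, including the logarithmic factor \(\ell_\eta\) coming from the spread of frequencies in \([\alpha,\beta]\).

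I would then handle general \(V\) perturbatively. The deviation of \(\opH\) from the exact conditional reflection is controlled by the bounce rate, via \cref{prop:ideal-bounce-count}-type bounds \(\lesssim\beta\eta\), together with nonconstant curvature. The condition \(\eta\le c_0/\beta\) makes these errors a small multiple of the leading dissipation. The point needing most care is bounding \(\|\opP\opU\opH\opUperpP\|\) and the commutator-type terms so that they cost only a factor \(O(\sqrt{\alpha\eta})\) relative to \(\|\opP h\|\), rather than \(O(1)\).
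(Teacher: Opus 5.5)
Your reduction to an \(L_0^2(\pi_\eta)\) block contraction of \(\opK^*\) and the micro--macro split match the paper. However, there are two genuine gaps: the Lyapunov functional you propose cannot produce the accelerated rate, so your one-step target is false for it; and the half-turn estimate that carries the proof is left open, with a proposed route that would not work.

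\textbf{The corrector.} With \(\mathsf A=(1+\opUperpP^*\opUperpP)^{-1}\opUperpP^*\), the cross term is \(\langle\mathsf Ah,h\rangle=\langle\opP^\perp h,\opUperpP(1+\opGammaP^2)^{-1}\opP h\rangle\). On a macroscopic mode with \(\opGammaP=s\) its coefficient is \(s/(1+s^2)\), and positivity of \(\mathscr E\) on modes with \(\opGammaP\approx1\) forces \(\epsilon=O(1)\), as you intend anyway. Now take \(V=\frac\alpha2\|x\|^2\) and \(h(x,y)=a^\top y\). Then:
\begin{itemize}
\item \(\opUPP h=\rho_{\rm mac}h\), so \(\|\opGammaP h\|=\gamma_{\rm gap}\|h\|\), and \(\mathscr E(h)=\|h\|^2\);
\item \(\opK^*h=\opUPP h+(1-\rho)\opHperpperp\opUperpP h\), so \(\|\opK^*h\|^2\ge\rho_{\rm mac}^2\|h\|^2\);
\item \(|\langle\mathsf A\opK^*h,\opK^*h\rangle|\le\gamma_{\rm gap}^2\|h\|^2\).
\end{itemize}
Hence \(\mathscr E(h)-\mathscr E(\opK^*h)\le(1+2\epsilon)\gamma_{\rm gap}^2\|h\|^2=O(\alpha\eta)\|h\|^2\). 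For small \(\alpha\eta\) this is far below \(c\sqrt{\alpha\eta/\ell_\eta}\,\|h\|^2\): the standard DMS normalisation only sees the unaccelerated rate.

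The missing idea is a corrector whose cross coefficient is of order one on the slow modes. It must be weighted by \(\opGammaP^{-1}\) and adapted to the rotation \((f_{\mathsf P},f_{\mathsf V})\mapsto(\opUPP f_{\mathsf P}-\opGammaP f_{\mathsf V},\,\opGammaP f_{\mathsf P}+\opUPP f_{\mathsf V})\) induced by \(\opU(\opP-\opP^\perp)\). The paper uses \(\mathscr C=\frac12(\|f_{\mathsf P}\|^2-\|f_{\mathsf V}\|^2)-\langle\opUPP\opGammaP^{-1}f_{\mathsf P},f_{\mathsf V}\rangle\). This changes by exactly \(-\|f_{\mathsf P}\|^2+\|f_{\mathsf V}\|^2\) under that rotation and satisfies \(|\mathscr C|\le\|f\|^2/(2\gamma_{\rm gap})\). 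It can therefore be added with weight \(\omega\asymp\rho/\Lambda_\rho\asymp\gamma_{\rm gap}/\sqrt{\ell_\eta}\), which is exactly what yields the rate \(\sqrt{\alpha\eta/\ell_\eta}\).

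\textbf{The half-turn estimate.} Because the accelerated corrector involves \(\opGammaP^{-1}\), the half-turn error must be small \emph{relative to} \(\opGammaP\). The paper proves \(\|(I+\opHperpperp)\opVperpP v\|^2\le C\log(1/\gamma_{\rm gap})\|\opGammaP v\|^2\) and uses it as \(\|\opGammaP^{-1}\opVperpP^*(I+\opHperpperp)f_\perp\|^2\le C\log(1/\gamma_{\rm gap})\|f_\perp\|^2\). Your route fails here for three reasons.
\begin{itemize}
\item A deviation bound of size \(\delta\), uniform over macroscopic frequencies, enters this argument as \(\delta/\gamma_{\rm gap}\). With \(\delta\asymp\beta\eta\) it caps the per-step rate at order \(\gamma_{\rm gap}^2/\delta\asymp1/\kappa\) instead of \(\sqrt{\alpha\eta}\).
\item The theorem assumes only \(\eta\le c_0/\beta\), with no dimension constraint, while the expected bounce count can be of order \(\beta\eta\sqrt d\gg1\) (cf.\ \cref{prop:ideal-bounce-count}). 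There is no few-bounce regime to perturb around.
\item Even for quadratic \(V\), the residual \(\alpha(x-\widetilde x)\) produces bounces and the centre \(y-\eta\alpha\widetilde X\) is random. So \(\opH\) is not a deterministic linear reflection, and your Gaussian warm-up does not reduce to a linear chain.
\end{itemize}

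The paper's argument is dimension-free and has three steps:
\begin{itemize}
\item[(i)] An affine Duhamel identity shows that, for \(\ell_{a,y}(w)=a^\top(w-\bar w_y)\), bounces act only through \(a^\top(g_y(w)-g_y(\widetilde w))\). This gives \(\|(I+\opH_y)\ell_{a,y}\|^2\le C\beta\eta\,a^\top(I-\Sigma_y)a\) with \(I-\Sigma_y=\eta\nabla^2U_\eta(y)\).
\item[(ii)] A conditional affine approximation of \(\opUperpP w\) has coefficient energy and remainder controlled by \(C\|\nabla(I-\opUPP)w\|\,\|\nabla w\|\).
\item[(iii)] These give the interpolation bound \(\|(I+\opHperpperp)\opUperpP w\|^2\le C\|\opGammaP w\|\,\|\opGammaP^3w\|\) on \(\ran\mathbf 1_{[1/2,1]}(\opUPP)\). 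The operator comparison lemma converts this into the needed estimate.
\end{itemize}
Step (iii) is where \(\ell_\eta\) comes from: the spread of the spectrum of \(\opGammaP^2\) over \([\gamma_{\rm gap}^2,1]\), which is present even for isotropic Gaussians. It does not come from the spread of curvatures in \([\alpha,\beta]\).

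\textbf{A smaller gap in coercivity.} Your sketch needs a lower bound on \(\|\opUperpP g\|\), but the conditional Poincar\'e inequality only gives the upper bound \(\|\opUperpP g\|^2\lesssim\eta\|\nabla g\|^2\). The paper obtains \(\opGammaP\succeq\gamma_{\rm gap}\) from the smoothing bound \(4\eta\|\nabla\opUPP f\|^2\le\|\opUperpP f\|^2\) combined with Poincar\'e applied to \(\opUPP f\).
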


\Cref{thm:ideal-convergence} is a hypocoercive estimate.  To see this,
let \(f\in L^2(\pi_\eta)\) have mean zero and write
\[
 \bar f(y)\deq\E_{\pi_\eta}[f(X,Y)\mid Y=y]\,,
 \qquad
 \widetilde f(x,y)\deq f(x,y)-\bar f(y)\,.
\]
Then, \(\E_{\pi_\eta}[\widetilde f(X,Y)\mid Y]=0\) and
\(\|f\|_2^2=\|\bar f\|_2^2+\|\widetilde f\|_2^2\), where
\(\|\cdot\|_2\) denotes the norm in \(L^2(\pi_\eta)\).  At fixed
\(Y\), conditional resampling removes \(\widetilde f\), whereas the
conditional half-turn leaves \(\bar f\) unchanged and does not increase
\(\|\widetilde f\|_2\).  Since reflection preserves the \(L^2\) norm,
the one-step Markov operator \(\opK\) satisfies
\[
 \|\opK f\|_2^2-\|f\|_2^2
 \le-\rho\,\|\widetilde f\|_2^2\,.
\]
The ordinary \(L^2\) estimate therefore gives no decay when
\(f=\bar f\).

It remains to understand observables that depend only on \(Y\).  For
\(f=\bar f\), reflection changes \(\bar f(Y)\) into
\(\bar f(2X-Y)\), which can be decomposed as
\[
 \bar f(2X-Y)
 =\E_{\pi_\eta}[\bar f(2X-Y)\mid Y]+r_f(X,Y)\,,
 \qquad
 \E_{\pi_\eta}[r_f(X,Y)\mid Y]=0\,.
\]
The first term still depends only on \(Y\), whereas \(r_f\) averages to
zero when \(X\) is resampled conditionally on \(Y\).  Since \(f\) is
centered, so is \(\bar f\), and \cref{lem:macroscopic-coercivity} shows
that the squared norm of the first term is at most
\((1-c\alpha\eta)\,\|\bar f\|_2^2\) for a universal constant \(c>0\).
Reflection nevertheless preserves the total norm: the decrease
of the first component is balanced by the contribution from \(r_f\).
Thus the usual \(L^2\) norm sees only a redistribution between the two
components, even though conditional resampling contracts the second one.
This interaction is the hypocoercive contraction 
mechanism~\cite{dms,lu-wang}.

To capture this mechanism, we add a corrector to the ordinary
squared \(L^2\) norm and use the modified Lyapunov function
\[
 \mathscr L_\omega(f)\deq\|f\|_2^2+\omega\mathscr C(f)\,.
\]
The corrector is chosen so that its one-step change contributes a negative
term proportional to \(\|\bar f\|_2^2\), together with terms controlled by
\(\|\widetilde f\|_2^2\).  Combined with the direct \(L^2\) estimate
above, this gives control of both components.  The model-specific input
for the half-turn is \cref{prop:global-halfturn-cutoff}, which bounds its
contribution to the change of the corrector.
This is a discrete version of the modified-\(L^2\) method of Dolbeault,
Mouhot, and Schmeiser~\cite{dms} (see also the sharpened formulation of Fan, Li, and
Lu~\cite{fan-li-lu}).

In the appendices, we denote by \(\opP\) the
conditional expectation given \(Y\) and set \(\opP^\perp\deq I-\opP\), and let
\(\opU\) denote pullback by the reflection \(Y\mapsto2X-Y\).  Then
\[
 f_{\mathsf P}\deq\opP f=\bar f\,,
 \qquad
 f_\perp\deq\opP^\perp f=\widetilde f\,,
 \qquad
 r_f=\opP^\perp\opU\opP f\,.
\]
The proof steps are organized as follows.  Appendix~\ref{app:operator-framework}
sets up the decomposition above and derives the ordinary \(L^2\) estimate.
Appendix~\ref{app:reflection-halfturn-estimates} states the two estimates
used to recover control of \(f_{\mathsf P}\).  The reflection estimate is proved
in Appendix~\ref{app:proof-coercivity}, and the half-turn estimate in
Appendices~\ref{sec:affine-halfturn}--\ref{app:global-halfturn-cutoff}.
Finally, Appendix~\ref{sec:block} constructs the modified energy, proves
its contraction, chooses \(\rho\) and the block length, and converts the
result into the stated \(\chi^2\) and total variation bounds.

\begin{remark}
    The use of $L^2$ analysis is partially motivated by~\cite{MonWan26EntropicPDMP}, which gave obstructions to entropic convergence for piecewise-deterministic Markov processes such as the BPS\@.
\end{remark}

\section{Efficient implementation of Proximal BPS}
\label{sec:implementation}

This section gives a gradient-only implementation of the ideal Proximal BPS
from \cref{sec:ideal-proximal-bps}.  It combines two ingredients.  The
first is sampling from the conditional law
\(\pi_\eta^{X\mid Y=y}\).  For this component, we import the first-order
rejection sampler and its RGO guarantee from~\cite{fors}; we specialize
that guarantee to the present conditional law, construct the required
approximate proximal point, and verify the prescribed first-order
residual bound.  The second
ingredient is the conditional half-turn process, whose bounce events we
simulate with a capped rate.  We then combine the imported RGO
approximation with the capped rate approximation of this process and bound
both the resulting total variation error and the number of gradient
queries.

\subsection{Implementing the restricted Gaussian oracle}
\label{sec:implementing-rgo}

We import both the first-order rejection algorithm and its analysis
from~\cite{fors}.  The theorem below records the specialization of their
RGO guarantee needed here.

\begin{theorem}[Restricted Gaussian oracle guarantee]
\label{thm:rgo-guarantee}
There is a universal constant \(K_{\rm RGO}\ge1\) such that the
following holds.
Let \(\eta>0\), \(y\in\R^d\), \(\delta\in(0,1/2)\), and suppose that
\(\widehat x_+\in\R^d\) satisfies
\begin{equation}
 \|y-\eta\nabla V(\widehat x_+)-\widehat x_+\|
 \le\sqrt{d\eta}\,.
 \label{eq:approximate-prox-condition}
\end{equation}
Then the first-order rejection sampler of~\cite[Theorems 3.1 and
D.1]{fors} can be instantiated so that its output law \(\widehat\pi\)
satisfies
\begin{equation}
 \chi^2 \bigl(\pi_\eta^{X\mid Y=y}\bigm\|\widehat\pi\bigr)
 \le\delta^2\,,\qquad
 \chi^2 \bigl(\widehat\pi\bigm\|\pi_\eta^{X\mid Y=y}\bigr)
 \le\delta^2\,,
 \label{eq:RGO-chi-accuracy}
\end{equation}
using at most \(K_{\rm RGO}\) expected queries to \(\nabla V\),
excluding the construction of \(\widehat x_+\), provided that
\begin{equation}
 \eta^{-1}\ge
 K_{\rm RGO}\beta\,
 \{\sqrt{d\log(1/\delta)}+\log(1/\delta)\}\,.
 \label{eq:rgo-scale-condition}
\end{equation}
\end{theorem}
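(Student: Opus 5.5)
The plan is to reduce the statement to the first-order rejection sampling (FORS) guarantee of~\cite{fors}, Theorems~3.1 and~D.1, by checking that its hypotheses hold for the conditional potential \(V_y\). Those results sample from densities \(\propto\exp\{-V(x)-\|x-y\|^2/(2\eta)\}\) with \(V\) \(\beta\)-smooth. They take as input an approximate minimizer \(\widehat x_+\) of \(V_y\), whose quality is measured through a first-order residual, and use a Gaussian proposal centered near \(\widehat x_+\) with a tuned rejection step.

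\textbf{Identifying the residual.} I would first note that \(\nabla V_y(x)=\nabla V(x)+(x-y)/\eta\). Hence
\[
 \eta\,\nabla V_y(\widehat x_+)=-(y-\eta\nabla V(\widehat x_+)-\widehat x_+)\,,
\]
so condition \eqref{eq:approximate-prox-condition} says exactly that \(\|\nabla V_y(\widehat x_+)\|\le\sqrt{d/\eta}\). By \eqref{eq:conditional-curvature}, \(V_y\) is \(\eta^{-1}\)-strongly convex. Therefore
\[
 \|\widehat x_+-\prox_{\eta V}(y)\|\le\eta\|\nabla V_y(\widehat x_+)\|\le\sqrt{d\eta}\,,
\]
which puts \(\widehat x_+\) at distance of order the conditional standard deviation times \(\sqrt d\) from the mode. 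This is the accuracy scale that FORS requires; in their statement the allowed error is a constant multiple of \(\sqrt{d\eta}\), and I would absorb any constant into \(K_{\rm RGO}\).

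\textbf{Matching the step-size and accuracy parameters.} Next I would verify that \eqref{eq:rgo-scale-condition} is their step-size condition \(\eta\lesssim 1/(\beta\sqrt{d\log(1/\delta)})\), together with the lower-order \(\log(1/\delta)\) term. Under this condition, their theorem gives an \(O(1)\) expected number of proposals, each costing \(O(1)\) gradient queries, and an output law that is \(\delta\)-accurate. The point needing care is the divergence: \cite{fors} may state accuracy in TV or in a one-sided Rényi/\(\chi^2\) sense. I would therefore inspect the mechanism directly. The output density equals the target density except on a truncation or failure event of probability \(\le\mathrm{poly}(\delta)\), and the ratio \(\widehat\pi/\pi_\eta^{X\mid Y=y}\) is bounded above and below by \(1\pm O(\delta)\) off a set of exponentially small mass. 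Since the first-order rejection sampler is exact except for a rare capped-acceptance event, this yields both \(\chi^2\) bounds in \eqref{eq:RGO-chi-accuracy}, after replacing \(\delta\) by a suitable power of itself. That replacement only changes constants in \eqref{eq:rgo-scale-condition}, because \(\log(1/\delta^k)=k\log(1/\delta)\).

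\textbf{Main obstacle.} I expect the hardest step to be the two-sided \(\chi^2\) control, especially \(\chi^2(\pi\|\widehat\pi)\), which needs \(\widehat\pi\) bounded below relative to \(\pi\). I would handle it using the explicit form of the FORS output. The output density is \(\pi\) multiplied by \(\min\{1,\cdot\}\)-type acceptance corrections, renormalized, where the corrections differ from \(1\) only on the rare set where the Gaussian-tail bound fails. On that set I would bound the density ratio crudely using strong log-concavity of \(V_y\) and the smoothness \(\beta\eta\le 1\), then integrate against the Gaussian tail. This gives contributions of order \(\delta^2\) once \(\eta^{-1}\gtrsim\beta\sqrt{d\log(1/\delta)}\).
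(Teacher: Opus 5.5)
Your plan matches the paper's treatment. The paper gives no independent proof: it imports the guarantee of \cite[Theorems~3.1 and~D.1]{fors} and observes, exactly as you do, that \eqref{eq:approximate-prox-condition} is the first-order residual bound $\eta\,\|\nabla V_y(\widehat x_+)\|\le\sqrt{d\eta}$ required there, with \eqref{eq:rgo-scale-condition} as the imported step-size condition. The two-sided bound \eqref{eq:RGO-chi-accuracy} is what Theorem~D.1 of \cite{fors} is cited for, so your mechanism-level re-derivation is unnecessary; if you pursued it, a pointwise density-ratio bound taken only from strong log-concavity and smoothness is of size $e^{\Theta(\beta\eta d)}$ (compare $\mathcal N(0,\eta I)$ with $\mathcal N(0,\eta(1+\beta\eta)^{-1}I)$), which a $\mathrm{poly}(\delta)$-mass exceptional set does not offset under \eqref{eq:rgo-scale-condition}, so you would need a genuine tail integration of the log-ratio.
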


The quantity in \eqref{eq:approximate-prox-condition} is exactly
\(\eta\,\|\nabla V_y(\widehat x_+)\|\), where $V_y=V+\frac{\|\cdot-y\|^2}{2\eta}$.  When
\(\eta\le1/\beta\), as in the application below,
\eqref{eq:conditional-curvature} shows that \(V_y\) is
\((\alpha+\eta^{-1})\)-strongly convex and
\((\beta+\eta^{-1})\)-smooth, with condition number at most two.
For example, gradient descent with step size
\(2/(\alpha+\beta+2\eta^{-1})\) contracts the distance to the
minimizer by a universal factor at every iteration.  The strong
convexity and smoothness bounds then convert this distance contraction
into the same geometric decrease, up to a factor of two, for the
quantity \(\eta\,\|\nabla V_y\|\).  Consequently, there is a universal
constant \(K_{\rm prox}\ge1\) such that this method, initialized at
\(x\), finds a point \(\widehat x_+\) satisfying
\eqref{eq:approximate-prox-condition} using at most
\begin{equation}
 K_{\rm prox}\,\Bigl[1+\log \Bigl(
 1+\frac{\|y-\eta\nabla V(x)-x\|}{\sqrt{d\eta}}
 \Bigr)\Bigr]
 \label{eq:first-order-prox-cost}
\end{equation}
gradient queries~\cite{nesterov}.  A gradient query certifies the
stopping condition, so neither function values nor an exact proximal
oracle are needed.  We use this routine only through
\eqref{eq:first-order-prox-cost}; all of its queries, as well as those
of the rejection sampler, are included in \(N_{\nabla V}\).

\subsection{Simulating the half-turn process}
\label{sec:simulating-bounces}

For convenience, recall from \eqref{eq:x-bounce-rate} that the ideal
bounce rate along the half-turn process is
\[
 \lambda_t\deq\lambda_{\widetilde X}(x_t,p_t)
 =\sqrt\eta\,
 \bigl[p_t^\top[\nabla V(x_t)-\nabla V(\widetilde X)]\bigr]_+\,.
\]
This rate is not uniformly bounded along every path.
Given a deterministic cap
\(\bar\lambda>0\), we therefore replace it by the truncated rate
\(\min\{\lambda_t,\bar\lambda\}\) and simulate the resulting process
by the homogeneous Poisson thinning construction reviewed in
\cref{sec:standard-bps-prelim}~\cite{lewis-shedler}.  Let \(X_\pi^{\rm id}\) and
\(X_\pi^{\rm cap}\) denote the time-\(\pi\) position outputs of the ideal
and capped half-turn processes, respectively,
and let \(N_{\nabla V}^{\rm cap}\) count the gradient queries used by
the capped simulation, assuming that \(\nabla V(\widetilde X)\) has
already been queried.  For the ideal-versus-capped comparison below,
the two processes use the same reference point \(\widetilde X\), drawn
exactly from \(\pi_\eta^{X\mid Y}\) independently of the current
position conditional on \(Y\).  This exact draw assumption only
isolates the capped rate error; \cref{sec:implementable-main} combines
the analysis with the approximate RGO of \cref{sec:implementing-rgo}
by coupling.

Both the ideal and capped processes evolve in continuous time without
time discretization; the latter differs only through its truncated
bounce rate.
For \(c_{y,\widetilde x}=y-\eta\nabla V(\widetilde x)\), the harmonic
flow over an interval of length \(u\) is given exactly by
\begin{equation}
 \begin{split}
 x_{t+u}-c_{y,\widetilde x}
 &=(x_t-c_{y,\widetilde x})\cos u
   +\sqrt\eta\,p_t\sin u\,,\\
 p_{t+u}
 &=-\eta^{-1/2}(x_t-c_{y,\widetilde x})\sin u
   +p_t\cos u\,.
 \end{split}
 \label{eq:explicit-harmonic-flow}
\end{equation}

\begin{algorithm}[H]
\caption{Capped-rate simulation of the conditional half-turn process}
\label{alg:capped-half}
\begin{algorithmic}[1]
\Statex \textbf{Input:} \((x,y)\), reference point \(\widetilde x\)
        with cached \(\nabla V(\widetilde x)\), and rate cap
        \(\bar\lambda\)
\State Draw \(p_0\sim\cN(0,I)\)
\State Set \(x_0\gets x\), \(t\gets0\), and
       \(c_{y,\widetilde x}\gets y-\eta\nabla V(\widetilde x)\)
\While{\(t<\pi\)}
  \State Draw a candidate waiting time
         \(u\sim\operatorname{Exp}(\bar\lambda)\)
  \State Set \(\bar u\gets\min\{u,\pi-t\}\)
  \State Propagate \((x_t,p_t)\) for time \(\bar u\) using
         \eqref{eq:explicit-harmonic-flow}
  \State \(t\gets t+\bar u\)
  \If{\(t<\pi\)}
    \State Query \(\nabla V(x_t)\) and set
           \(\lambda_t\gets\lambda_{\widetilde x}(x_t,p_t)\)
    \State Draw \(U\sim\operatorname{Unif}[0,1]\)
    \If{\(U\le\min\{1,\lambda_t/\bar\lambda\}\)}
      \State \(p_t\gets
             R_{\nabla V(x_t)-\nabla V(\widetilde x)}p_t\)
    \EndIf
  \EndIf
\EndWhile
\State \Return \(x_\pi\)
\end{algorithmic}
\end{algorithm}

The cap \(\bar\lambda\) is not required to dominate the ideal rate
\(\lambda_t\).  Thus, \cref{alg:capped-half} exactly simulates the
half-turn process with rate \(\min\{\lambda_t,\bar\lambda\}\), which serves as
an approximation to the ideal half-turn process.  The next proposition bounds
the coupling error introduced by truncation and the gradient queries
used by the capped simulation.

\begin{proposition}[Capped-rate approximation and query cost]
\label{prop:capped-rate}
There exist universal constants \(c_{\rm evt}\in(0,1)\) and
\(K_{\rm evt}\ge1\) such that the following holds whenever
\(\beta\eta\sqrt d\le c_{\rm evt}\).
\begin{enumerate}[label=\textup{(\roman*)},leftmargin=2.2em]
\item Let \(\xi\in(0,1/2)\), \(\Delta\ge1\), and set
\begin{equation}
 \bar\lambda_{\xi,\Delta}\deq
 K_{\rm evt}\beta\eta\,\bigl(\sqrt{d\ell_{\xi,\Delta}}+
 \ell_{\xi,\Delta}\bigr)\,,
 \qquad\text{where}\qquad
 \ell_{\xi,\Delta}\deq
 K_{\rm evt}\,\Bigl[\Delta+\log
 \frac1\xi\Bigr]\,.
 \label{eq:clock-cap}
\end{equation}
For every probability measure \(\nu_0\ll\pi_\eta\) satisfying
\(D_2(\nu_0\|\pi_\eta)\le\Delta\), consider the ideal half-turn process and
the capped half-turn process with rate cap
\(\bar\lambda=\bar\lambda_{\xi,\Delta}\), both started from
\((X_0,Y)\sim\nu_0\).  They can be coupled so that
\[
 \Prob_{\nu_0} \bigl\{X_\pi^{\rm id}\ne X_\pi^{\rm cap}\bigr\}
 \le\xi\,.
\]
\item For the same cap, every initial state, and every reference point,
the expected number of gradient queries used by the capped simulation
satisfies
\begin{equation}
 \E N_{\nabla V}^{\rm cap}
 =\pi\bar\lambda_{\xi,\Delta}
 =\pi K_{\rm evt}\beta\eta\,\bigl(
 \sqrt{d\ell_{\xi,\Delta}}+\ell_{\xi,\Delta}\bigr)\,.
 \label{eq:half-query-cost}
\end{equation}
\end{enumerate}
\end{proposition}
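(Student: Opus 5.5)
Part (ii) follows directly from the thinning construction in \cref{alg:capped-half}. Candidate times form a homogeneous Poisson process of rate \(\bar\lambda\) on \([0,\pi]\), whatever the state and reference point. Exactly one gradient query is made at each candidate time strictly before \(\pi\). The number of candidates is therefore Poisson with mean \(\pi\bar\lambda\), which gives \eqref{eq:half-query-cost}.

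For part (i), I will couple the two processes on a common Poisson clock of rate \(\bar\lambda\) with shared uniforms \(U\). Both start from the same \((X_0,Y)\), \(\widetilde X\) and \(p_0\). The ideal process is realised through the same thinning wherever \(\lambda_t\le\bar\lambda\); to handle the excess rate \([\lambda_t-\bar\lambda]_+\), I superpose an independent Poisson clock that only the ideal process uses. Under this coupling the paths agree until the first time \(\lambda_t>\bar\lambda\) along the ideal path. Hence
\[
 \Prob_{\nu_0}\{X_\pi^{\rm id}\ne X_\pi^{\rm cap}\}\le\Prob_{\nu_0}\Bigl\{\sup_{t\le\pi}\lambda_t>\bar\lambda\Bigr\}\,.
\]
To pass from \(\nu_0\) to stationarity I will use Cauchy--Schwarz. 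Since \(\widetilde X\) and \(p_0\) are drawn from kernels depending only on \((X_0,Y)\), the initial phase-space law has density \(\dd\nu_0/\dd\pi_\eta\) with respect to its stationary counterpart. This gives \(\Prob_{\nu_0}(A)\le e^{\Delta/2}\,\Prob_{\rm stat}(A)^{1/2}\), so it suffices to show \(\Prob_{\rm stat}\{\sup_t\lambda_t>\bar\lambda\}\le\xi^2e^{-\Delta}\), a tail of order \(e^{-\ell}\) with \(\ell=\ell_{\xi,\Delta}\).

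Next I reduce the supremum to single-time bounds. By \cref{prop:fiber-construction}, \((x_t,p_t)\) with \(Y,\widetilde X\) fixed stays distributed according to \(\nu_{\eta,Y}\) at every \(t\), conditionally independent of \(\widetilde X\) given \(Y\). At a fixed time, \(\lambda_t\le\sqrt\eta\,|p_t^\top h|\) with \(\|h\|\le\beta\|x_t-\widetilde X\|\). Two concentration facts then apply. First, \(x_t-\widetilde X\) is a difference of independent draws from a strongly log-concave law with parameter \(\eta^{-1}\), so \(\|x_t-\widetilde X\|\lesssim\sqrt{\eta}(\sqrt d+\sqrt{\ell})\) with probability \(1-e^{-\ell}\). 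Second, the factor \(p_t^\top h/\|h\|\) is the awkward one, since \(h\) depends on \(x_t\), which is correlated with \(p_t\) along the path. At a fixed time, however, \(p_t\sim\cN(0,I)\) is independent of \(x_t\) under \(\nu_{\eta,Y}\), and this gives a factor \(\sqrt{\ell}\). Together these yield \(\lambda_t\lesssim\beta\eta(\sqrt{d\ell}+\ell)\) with probability \(1-e^{-\ell}\).

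The main obstacle is upgrading the fixed-time bound to a uniform bound on \([0,\pi]\). My plan is to discretize \([0,\pi]\) into a grid of mesh \(\delta\), union bound over the grid at the cost of \(\log(\pi/\delta)\), which is absorbed into \(K_{\rm evt}\ell\), and control the variation between grid points. Between grid points the harmonic flow has speed \(\|\dot x\|=\sqrt\eta\|p\|\lesssim\sqrt{\eta d}\) and \(\|\dot p\|\lesssim\|x-c\|/\sqrt\eta\), while bounces preserve \(\|p\|\). Also \(\|x-c\|\) stays controlled on the time-\(\pi\) horizon, because \(\|c-\widetilde X\|\) is small when \(\beta\eta\sqrt d\le c_{\rm evt}\). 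Thus \(\|x_t-\widetilde X\|\) is Lipschitz in \(t\) on this event. A bounce changes \(p\) only by reflecting off the current \(h\), which flips the sign of \(p^\top h\) to non-positive; it therefore never increases \(\lambda\) at the jump. Choosing \(\delta\) polynomially small makes the drift between grid points \(O(1)\) times the fixed-time bound. If this continuity argument proves delicate, a fallback is to bound \(\E\int_0^\pi\1\{\lambda_t>\bar\lambda/2\}\,\dd t\) by stationarity and convert it to a supremum bound through the Lipschitz property.
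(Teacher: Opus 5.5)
\textbf{There is a genuine gap in part (i).} Part (ii) is fine. So are your coupling (a shared thinning clock plus an independent excess clock) and the Cauchy--Schwarz transfer from $\pi_\eta$ to $\nu_0$. The problem is the step where you bound separation by $\Prob_{\rm stat}\{\sup_{t\le\pi}\lambda_t>\bar\lambda\}$ and union bound over a time grid.

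The union-bound cost $\log(\pi/\delta)$ cannot be absorbed into $\ell_{\xi,\Delta}=K_{\rm evt}[\Delta+\log(1/\xi)]$, because your mesh must shrink with $d$. Write $h(x)\deq\nabla V(x)-\nabla V(\widetilde X)$ and $c=Y-\eta\nabla V(\widetilde X)$. Between bounces,
\[
 \frac{\dd}{\dd t}\bigl(p_t^\top h(x_t)\bigr)
 =-\eta^{-1/2}\,(x_t-c)^\top h(x_t)
 +\sqrt\eta\,p_t^\top\nabla^2V(x_t)\,p_t\,.
\]
Here $\|p_t\|\approx\sqrt d$, $\|x_t-c\|\approx\sqrt{\eta d}$ and $\|h\|\lesssim\beta\sqrt{\eta d}$, so each term can be of order $\beta\sqrt\eta\,d$. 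For example, when $V=\frac\beta2\|x\|^2$ the second term equals $\beta\sqrt\eta\,\|p_t\|^2$. Separate speed bounds on $x$ and $p$, as you propose, therefore only give $|\dot\lambda_t|\lesssim\beta\eta d$, while the level is $\bar\lambda\asymp\beta\eta\sqrt{d\ell}$.

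This forces $\delta\lesssim\sqrt{\ell/d}$, so the union bound loses a factor of order $\sqrt{d/\ell}$. You would then need $\sqrt d\,e^{-cK_{\rm evt}\ell}\le\xi^2e^{-\Delta}$, which fails for fixed $(\xi,\Delta)$ once $d$ is large. Your fallback loses the same factor: converting occupation time into a supremum bound divides by an interval length of order $\bar\lambda/\sup|\dot\lambda|\asymp\sqrt{\ell/d}$. As written, you would only prove the proposition with $\log d$ added to $\ell_{\xi,\Delta}$. A $d$-free mesh would require controlling, pathwise and uniformly in time, the near-cancellation of the two terms above, and your plan does not address this.

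Separately, $\|c-\widetilde X\|$ is not small. The vector $c-\widetilde X=Y-\widetilde X-\eta\nabla V(\widetilde X)$ has size of order $\sqrt{\eta d}$, comparable to the orbit radius. The hypothesis $\beta\eta\sqrt d\le c_{\rm evt}$ plays a different role, explained below.

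\textbf{The missing idea is that no uniform-in-time control is needed.} Under your own coupling, the paths can separate only if the auxiliary excess clock fires before time $\pi$. That probability is at most the expected number of its points, $\E\int_0^\pi(\lambda_t-\bar\lambda)_+\,\dd t$, with $\lambda_t$ evaluated along the ideal path. You already noted that the ideal path is stationary at each fixed $t$. Fubini therefore reduces this to $\pi\,\E(\lambda-\bar\lambda)_+$ for a single stationary triple.

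Then \eqref{eq:event-overflow} bounds this by $C\beta\eta\sqrt d\,e^{-c\ell_{\xi,\Delta}}\le Ce^{-c\ell_{\xi,\Delta}}$. This is exactly where $\beta\eta\sqrt d\le c_{\rm evt}$ is used: it neutralizes the prefactor. Enlarging $K_{\rm evt}$ makes the bound at most $\xi^2e^{-\Delta}$, and your Cauchy--Schwarz step then gives $\xi$. This is the paper's argument, and it needs only the fixed-time tail of \cref{lem:event-tail}, with no path regularity.
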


The proof is deferred to
\hyperref[app:proof-capped-rate]{Appendix~\ref*{app:halfturn-process},
proof of \cref*{prop:capped-rate}}.

\subsection{Implementable Proximal BPS and total complexity}
\label{sec:implementable-main}

\Cref{alg:implementable-main} assembles the preceding components into
the complete gradient-only Proximal BPS algorithm.  In each transition,
the approximate RGO produces an
approximate conditional draw \(\widetilde X\), which is reused as in the ideal
construction: it becomes the next position on the resampling branch and
the reference point for the capped half-turn process in
\cref{alg:capped-half}.
Here,
\textsc{FirstOrderProx}\((x,y,Q)\) denotes a first-order solver for
\(V_y\), initialized at \(x\), that either returns a point satisfying
\eqref{eq:approximate-prox-condition} within \(Q\) gradient queries or
reports failure.

A reported failure means only that the fixed query budget
was exhausted before the residual condition was certified; it can occur
because the initial residual in \eqref{eq:first-order-prox-cost} is
unbounded over the state space.  The sampler then returns its current
position, a fallback that preserves the
proximal solver's query cap while requiring no additional queries.
The analysis treats the entire failure event as a coupling error, the fallback is not required to satisfy any accuracy constraints.  The call
\textsc{ApproximateRGO}\((y,\widehat x_+,a)\) denotes the sampler in \cref{thm:rgo-guarantee}.

The parameter choices following \cref{alg:implementable-main}
jointly control the proximal solver, RGO, and capped-rate errors.
\Cref{thm:main-general} then gives the resulting accuracy and expected
query complexity.

\begin{algorithm}[H]
\caption{Implementable Proximal BPS}
\label{alg:implementable-main}
\begin{algorithmic}[1]
\Statex \textbf{Input:} \(X_0\sim\mu_0\), where
        \(D_2(\mu_0\|\mu)\le\Delta\)
\Statex \textbf{Parameters:}
       \(\eta,T,a,\rho,\bar\lambda,Q_{\rm prox}\)
\State Draw \(Z_0\sim\cN(0,I)\) independently and set
       \(Y_0\gets X_0+\sqrt\eta Z_0\)
\For{\(k=0,\ldots,T-1\)}
  \State \(Y_{k+1}\gets2X_k-Y_k\)
  \State \(\widehat x_+\gets
         \Call{FirstOrderProx}{X_k,Y_{k+1},Q_{\rm prox}}\)
  \If{\textsc{FirstOrderProx} reports failure}
    \State \Return \(X_k\)
  \EndIf
  \State \(\widetilde X\gets
         \Call{ApproximateRGO}{Y_{k+1},\widehat x_+,a}\)
  \State Draw \(B_k\sim\operatorname{Bernoulli}(\rho)\) independently
  \If{\(B_k=1\)}
    \State \(X_{k+1}\gets\widetilde X\)
  \Else
    \State Query \(\nabla V(\widetilde X)\)
    \State \(X_{k+1}\gets
           \Call{CappedHalfTurn}
           {X_k,Y_{k+1},\widetilde X,\bar\lambda}\)
           using \cref{alg:capped-half}
  \EndIf
\EndFor
\State \Return \(X_T\)
\end{algorithmic}
\end{algorithm}

\paragraph{Choice of parameters.}
Given \(\Delta\ge1\), \(\varepsilon\in(0,1/4)\),
\(0<\eta\le1/\beta\), and a tuning constant \(K\ge1\), define
\begin{equation}
 \ell\deq K\,\Bigl[\Delta+
 \log \Bigl(\frac{Kd\kappa}
 {\varepsilon\alpha\eta}\Bigr)\Bigr]\,,
 \qquad
 \bar\lambda_{\rm impl}\deq K\beta\eta\,(\sqrt{d\ell}+\ell)\,.
 \label{eq:implementation-log-level}
\end{equation}
Set \(\rho\) as in \eqref{eq:rho-choice}, let \(N\) be the smallest
integer satisfying \eqref{eq:N-main-bound}, and define
\begin{equation}
 \begin{aligned}
 T&\deq \Bigl\lceil K\,\Bigl(\Delta+
       \log\frac4\varepsilon\Bigr)\Bigr\rceil\,N\,,
 \qquad
 a\deq\frac{\varepsilon}{6T}\,,
 \qquad Q_{\rm prox}\deq\bigl\lceil K\log(\kappa\ell)\bigr\rceil\,.
 \end{aligned}
 \label{eq:implementation-parameters}
\end{equation}

\begin{theorem}[Sampler from an order-2 R\'enyi warm start]
\label{thm:main-general}
There exist universal constants \(c\in(0,1)\) and \(K\ge1\) such
that the following holds.  Assume \eqref{eq:SC-smooth}.  Let
\(\Delta\ge1\), \(\varepsilon\in(0,1/4)\), and let \(\mu_0\) be a
probability measure on \(\R^d\) satisfying
\(D_2(\mu_0\|\mu)\le\Delta\), and choose \(0<\eta\le1/\beta\).
Assume that
\begin{equation}
 \beta\eta\,\bigl(\sqrt{d\ell}+\ell\bigr)\le c\,.
 \label{eq:main-scale-condition}
\end{equation}
Run \cref{alg:implementable-main} from \(X_0\sim\mu_0\) using
\eqref{eq:implementation-log-level}--\eqref{eq:implementation-parameters}.  If
\(\widehat X\) is its output and \(N_{\nabla V}\) is its total number
of gradient queries, then
\begin{equation}
 \TV\bigl(\mathcal L(\widehat X),\mu\bigr)\le\varepsilon\,.
 \label{eq:main-TV}
\end{equation}
Moreover, its expected query count satisfies
\begin{equation}
 \E N_{\nabla V}
 \le
 \frac{K}{\sqrt{\alpha\eta}}
 \sqrt{\log\frac{e}{\alpha\eta}}\,
 \Bigl(\Delta+\log\frac4\varepsilon\Bigr)\,
 \bigl[Q_{\rm prox}+\beta\eta\,\bigl(
 \sqrt{d\ell}+\ell\bigr)\bigr]\,.
 \label{eq:main-grad-general}
\end{equation}
\end{theorem}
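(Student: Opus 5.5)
The argument couples \cref{alg:implementable-main} with the ideal chain of \cref{alg:ideal-main}, run for the same \(T\) steps from the same lifted initial state \((X_0,Y_0)\). It then splits the total variation error into three parts: the mixing error of the ideal chain, the per-step coupling failures, and the early-return event of \textsc{FirstOrderProx}.

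\textbf{Ideal mixing.} By \eqref{eq:warm-lift-intro}, \(\chi^2(\nu_0\|\pi_\eta)\le e^\Delta\). \Cref{thm:ideal-convergence} applies because \eqref{eq:main-scale-condition} with \(c\) small forces \(\eta\le c_0/\beta\). Taking \(J=\lceil K(\Delta+\log(4/\varepsilon))\rceil\) and \(T=JN\), \eqref{eq:block-TV} gives an ideal error of at most \(\tfrac12e^{\Delta/2}e^{-J/2}\le\varepsilon/4\) once \(K\ge2\).

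\textbf{Per-step coupling.} I will work under the invariant that the \(k\)-th ideal state has law \(\nu_0\mathcal K^k\). Since \(\mathcal K\) is a Markov operator contracting \(\chi^2\), this gives \(D_2(\nu_0\mathcal K^k\|\pi_\eta)\le\Delta\) at every step; the reflection and composition steps preserve the same control. On the event that the two chains still agree, one step contributes three failure probabilities.
\begin{itemize}
\item \emph{Prox failure.} Bound \(\Prob\{\text{failure}\}\) through \eqref{eq:first-order-prox-cost}. This needs a tail bound on \(\|Y_{k+1}-\eta\nabla V(X_k)-X_k\|/\sqrt{d\eta}\) under \(\pi_\eta\). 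There \(Y-X\sim\cN(0,\eta I)\) and \(\eta\|\nabla V(X)\|\) is of order \(\eta\sqrt{\beta d}\), since \(\E_\mu\|\nabla V\|^2=\E_\mu\Delta V\le\beta d\). Transferring to the warm law costs a factor \(e^{\Delta}\) via Cauchy--Schwarz, and the budget \(Q_{\rm prox}=\lceil K\log(\kappa\ell)\rceil\) then makes the failure probability \(\lesssim e^{\Delta}e^{-\ell}\le\varepsilon/(6T)\) for the stated \(\ell\).
\item \emph{RGO error.} Set \(\delta=a\) in \cref{thm:rgo-guarantee}; its scale condition \eqref{eq:rgo-scale-condition} follows from \eqref{eq:main-scale-condition}, because \(\log(1/a)\lesssim\ell\). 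A maximal coupling of \(\widetilde X\) with an exact RGO draw then fails with probability at most \(\delta/2\le\varepsilon/(6T)\) by \eqref{eq:TV-chi-prelim}.
\item \emph{Capped rate.} Apply \cref{prop:capped-rate}(i) with \(\xi=\varepsilon/(6T)\). Its cap \(\bar\lambda_{\xi,\Delta}\) is at most \(\bar\lambda_{\rm impl}\), since \(\log(1/\xi)\lesssim\log(Td/\varepsilon)\le\ell\) after choosing \(K\). A larger cap only reduces the truncation discrepancy, which the proof of that proposition should accommodate monotonically.
\end{itemize}
Summing the three contributions over \(T\) steps gives at most \(3T\cdot\varepsilon/(6T)=\varepsilon/2\). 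Together with the ideal error this yields \eqref{eq:main-TV} with total \(\le3\varepsilon/4\).

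\textbf{Query count.} Each iteration costs at most \(Q_{\rm prox}\), plus \(K_{\rm RGO}\) in expectation, plus \(1+\pi\bar\lambda_{\rm impl}\) in expectation by \eqref{eq:half-query-cost}. These per-iteration bounds hold uniformly in the state, so multiplying by \(T\lesssim N(\Delta+\log(4/\varepsilon))\) and using the choice of \(N\) from \eqref{eq:N-main-bound} gives \eqref{eq:main-grad-general}.

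\textbf{Main obstacle.} The delicate step is making the per-step warm-start control rigorous along the coupled implementable chain. The approximate RGO and capped kernels do not preserve \(\pi_\eta\), so their laws could drift away from it. I will avoid this by measuring every failure event on the ideal chain, whose marginals stay \(\Delta\)-warm, and using the coupling only as "agree so far", as in \cref{prop:capped-rate}(i), which is stated for \(\nu_0\)-warm inputs. The prox-failure tail bound under a warm law is the remaining technical computation.
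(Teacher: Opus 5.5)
Your proposal is correct and follows essentially the same route as the paper. You couple \cref{alg:implementable-main} with the ideal chain along the ideal trajectory, whose reflected laws stay $\Delta$-warm by data processing, and charge prox failure, RGO mismatch and cap mismatch at most $a=\varepsilon/(6T)$ each per step (via \cref{thm:rgo-guarantee}, \cref{prop:capped-rate} with monotonicity in the cap, and a Cauchy--Schwarz transfer from $\pi_\eta$); you then add the ideal error from \cref{thm:ideal-convergence} and multiply the state-uniform per-iteration query cost by $T$.

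The only step to firm up is the prox-failure tail. The second-moment identity $\E_\mu\|\nabla V\|^2\le\beta d$ gives only polynomial tails, which would force a solver budget of order $\Delta+\log(T/\varepsilon)$ rather than $\log(\kappa\ell)$. You need sub-Gaussian concentration from \eqref{eq:concentration-tool}: the paper applies it to $\|X-x_\star\|$ with $\|\nabla V(X)\|\le\beta\|X-x_\star\|$, and applying it directly to the $\beta$-Lipschitz function $\|\nabla V\|$ also works. This yields $\pi_\eta(\text{failure})\le e^{-\Delta}a^2$, which Cauchy--Schwarz turns into the per-step bound $a$; note the transfer costs $e^{\Delta/2}$ times a square root, not a factor $e^{\Delta}$ as you wrote.
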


\begin{proof}
Set \(\nu_0\deq\mathcal L(X_0,Y_0)\).  By
\eqref{eq:warm-lift-intro} and \eqref{eq:renyi-chi},
\[
 D_2(\nu_0\|\pi_\eta)\le\Delta\,,
 \qquad
 \chi^2(\nu_0\|\pi_\eta)\le e^\Delta\,.
\]
Since \(\ell\ge1\), \eqref{eq:main-scale-condition} gives
\(\beta\eta\sqrt d\le c\).  Taking \(c\le c_0\) and using
\eqref{eq:block-TV},
\[
 \TV\bigl((\nu_0 \mathcal K^T)^X,\mu\bigr)
 \le\frac12\exp \Bigl\{
 \frac12\,\Bigl[\Delta-
 \Bigl\lceil K\,\Bigl(\Delta+
 \log\frac4\varepsilon\Bigr)\Bigr\rceil\Bigr]\Bigr\}
 \le\frac\varepsilon2\,,
\]
after increasing \(K\).  Minimality of \(N\), together with
\(\alpha\eta\le1\), gives
\begin{equation}
 \begin{aligned}
 N&\le 1+\frac{C}{\sqrt{\alpha\eta}}
       \sqrt{\log\frac{e}{\alpha\eta}}
 \le\frac{C+1}{\sqrt{\alpha\eta}}
       \sqrt{\log\frac{e}{\alpha\eta}}\,,\\
 T&\le\frac{C}{\sqrt{\alpha\eta}}
       \sqrt{\log\frac{e}{\alpha\eta}}\,
      \Bigl(\Delta+\log\frac4\varepsilon\Bigr)\,.
 \end{aligned}
 \label{eq:T-expanded}
\end{equation}

Let \(\nu_k\deq\nu_0\mathcal K^k\), and let \(\nu_k^+\) be its image
under the reflection at the start of transition \(k\).  Since
\(\mathcal K\) preserves \(\pi_\eta\) and the reflection is a
\(\pi_\eta\)-preserving bijection, data processing gives
\[
 D_2(\nu_k^+\|\pi_\eta)
 =D_2(\nu_k\|\pi_\eta)
 \le D_2(\nu_0\|\pi_\eta)\le\Delta\,,
 \qquad 0\le k<T\,.
\]
Since \(a^{-1}=6T/\varepsilon\), \eqref{eq:T-expanded} and
\eqref{eq:clock-cap} yield, for a sufficiently large universal \(K\),
\begin{equation}
 0<a<\frac12\,,
 \qquad
 \log\frac1a\vee\ell_{a,\Delta}
 \le C\,\Bigl[\Delta+
       \log \Bigl(\frac{Kd\kappa}
       {\varepsilon\alpha\eta}\Bigr)\Bigr]
 \le\ell\,.
 \label{eq:implementation-log-dominance}
\end{equation}
Taking
\(c\le\min\{c_0,c_{\rm evt},K_{\rm RGO}^{-1}\}\) and
\(K\ge K_{\rm evt}\), we have
\[
 \begin{split}
 &K_{\rm RGO}\beta\eta\,
 \bigl\{\sqrt{d\log(1/a)}+\log(1/a)\bigr\}
 \le K_{\rm RGO}c\le1\,,\\
 &\bar\lambda_{\rm impl}
 =K\beta\eta\,(\sqrt{d\ell}+\ell)
 \ge K_{\rm evt}\beta\eta\,
       (\sqrt{d\ell_{a,\Delta}}+\ell_{a,\Delta})
 =\bar\lambda_{a,\Delta}\,.
 \end{split}
\]
Thus, \cref{thm:rgo-guarantee} applies with \(\delta=a\), while
\cref{prop:capped-rate} and monotonicity in the cap apply along every
ideal transition.  If
\(p_k^{\rm RGO}\) and \(p_k^{\rm cap}\) denote the mismatch
probabilities in the corresponding couplings, then
\[
 p_k^{\rm RGO}\le a\,,
 \qquad
 p_k^{\rm cap}\le a\,,
 \qquad
 \E N_{\nabla V}^{\rm RGO}\le K_{\rm RGO}\,,
 \qquad 0\le k<T\,.
\]

For the proximal solver, let \(x_\star\) minimize \(V\).  Under
\(\pi_\eta\), write
\(Y=X+\sqrt\eta Z\) with \(X\sim\mu\),
\(Z\sim\cN(0,I)\), and \(X\perp Z\), and set
\begin{equation}
 R(X,Y)\deq
 \frac{\|(2X-Y)-\eta\nabla V(X)-X\|}{\sqrt{d\eta}}
 =\frac{\|\sqrt\eta Z+\eta\nabla V(X)\|}{\sqrt{d\eta}}\,.
 \label{eq:prox-initial-residual}
\end{equation}
Integration by parts gives
\(
 \alpha\,\E\|X-x_\star\|^2
 \le\E\langle\nabla V(X),X-x_\star\rangle=d
\).
Using \(\|\nabla V(X)\|\le\beta\,\|X-x_\star\|\),
\(\beta\eta\le1\), \eqref{eq:concentration-tool}, and the Gaussian
norm tail, we obtain for \(u\ge1\)
\begin{equation}
 \begin{aligned}
 &\Prob_{\pi_\eta} \bigl\{
 R(X,Y)>2\sqrt\kappa\,\bigl(1+\sqrt{2u/d}\bigr)\bigr\}\\
 &\quad\le
 \Prob \bigl\{\|X-x_\star\|>
       \sqrt{d/\alpha}+\sqrt{2u/\alpha}\bigr\}
 +\Prob \bigl\{\|Z\|>\sqrt d+\sqrt{2u}\bigr\}
 \le2e^{-u}\,.
 \end{aligned}
 \label{eq:prox-initial-residual-tail}
\end{equation}
Set \(u_0\deq\Delta+2\log(1/a)+\log2\).  After increasing \(K\),
\eqref{eq:implementation-log-dominance} and the definition of
\(Q_{\rm prox}\) give
\[
 u_0\le C\ell\,,
 \qquad
 Q_{\rm prox}\ge K_{\rm prox}\,
 \bigl[1+\log \bigl\{1+2\sqrt\kappa\,
      \bigl(1+\sqrt{2u_0/d}\bigr)\bigr\}\bigr]\,.
\]
If \(\mathcal F\) is the solver failure event, then
\eqref{eq:first-order-prox-cost} and
\eqref{eq:prox-initial-residual-tail} give, under stationarity,
\[
 \pi_\eta(\mathcal F)
 \le2e^{-u_0}=e^{-\Delta}a^2\,.
\]
Cauchy--Schwarz then yields
\[
 \nu_k^+(\mathcal F)
 \le e^{D_2(\nu_k^+\|\pi_\eta)/2}\,
       \pi_\eta(\mathcal F)^{1/2}
 \le a\,.
\]

Construct the preceding couplings along the ideal trajectory, using the
same Bernoulli branch variables in both chains.  If \(X_T^{\rm id}\)
is the ideal output, a union bound gives
\begin{equation}
 \Prob\{\widehat X\ne X_T^{\rm id}\}
 \le\sum_{k=0}^{T-1}
 \bigl\{\nu_k^+(\mathcal F)
       +p_k^{\rm RGO}+p_k^{\rm cap}\bigr\}
 \le3Ta=\frac\varepsilon2\,.
 \label{eq:implementation-coupling-error}
\end{equation}
Consequently,
\[
 \TV\bigl(\mathcal L(\widehat X),\mu\bigr)
 \le\Prob\{\widehat X\ne X_T^{\rm id}\}
    +\TV\bigl(\mathcal L(X_T^{\rm id}),\mu\bigr)
 \le\varepsilon\,,
\]
which proves \eqref{eq:main-TV}.

For the query count, early termination can only help, and a half-turn
call uses one reference gradient query and an expected
\(\pi\bar\lambda_{\rm impl}\) queries at candidate times.  Therefore
\begin{equation}
 \E N_{\nabla V}
 \le T\,\bigl[Q_{\rm prox}+K_{\rm RGO}
 +(1-\rho)\,(1+\pi\bar\lambda_{\rm impl})\bigr]
 \le KT\,\bigl[Q_{\rm prox}
 +\beta\eta\,(\sqrt{d\ell}+\ell)\bigr]\,.
 \label{eq:implementation-grad-cost}
\end{equation}
The last inequality uses \(Q_{\rm prox}\ge1\).  Combining it with
\eqref{eq:T-expanded} proves
\eqref{eq:main-grad-general}.
\end{proof}

\begin{corollary}[Optimized gradient complexity]
\label{cor:tuned}
There exist universal constants \(c_{\rm opt}\in(0,1)\) and
\(K_{\rm opt}\ge1\) such that the following holds.  Suppose that
\eqref{eq:SC-smooth} holds, fix \(\Delta\ge1\) and
\(\varepsilon\in(0,1/4)\), and let \(\mu_0\) be a probability measure
on \(\R^d\) satisfying \(D_2(\mu_0\|\mu)\le\Delta\).  Set
\begin{equation}
 L\deq\Delta+\log\frac{K_{\rm opt}d\kappa}{\varepsilon}\,,
 \qquad
 \eta\deq\frac{c_{\rm opt}}{\beta\,(\sqrt{dL}+L)}\,,
 \label{eq:tuned-eta}
\end{equation}
and run \cref{alg:implementable-main} from \(X_0\sim\mu_0\) using
\eqref{eq:implementation-log-level}--\eqref{eq:implementation-parameters}.  Its output \(\widehat X\) satisfies
\[
 \TV\bigl(\mathcal L(\widehat X),\mu\bigr)\le\varepsilon\,.
\]
Moreover, if \(N_{\nabla V}\) is the total number of gradient queries,
then
\begin{equation}
 \E N_{\nabla V}
 \le
 K_{\rm opt}\sqrt\kappa\,(d+L)^{1/4}\,L^{3/4}\,
 \Bigl(\Delta+\log\frac4\varepsilon\Bigr)
 \log(\kappa L)\,.
 \label{eq:tuned-complexity}
\end{equation}
In particular, when \(\Delta=O(1)\), the expected gradient
complexity is \(\widetilde O(\sqrt\kappa\,d^{1/4})\), where
\(\widetilde O\) suppresses logarithmic factors in
\(d\), \(\kappa\), and \(1/\varepsilon\).
\end{corollary}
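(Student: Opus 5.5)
The plan is to obtain \cref{cor:tuned} as a direct specialization of \cref{thm:main-general}: plug in the prescribed \(\eta\) from \eqref{eq:tuned-eta}, check the hypotheses of that theorem, and simplify the bound \eqref{eq:main-grad-general}.

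\textbf{Admissibility of \(\eta\).} First I check \(0<\eta\le1/\beta\). This is immediate from \eqref{eq:tuned-eta}, since \(c_{\rm opt}<1\) and \(\sqrt{dL}+L\ge1\).

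\textbf{The scale condition \eqref{eq:main-scale-condition}.} The quantity \(\ell\) in \eqref{eq:implementation-log-level} itself depends on \(\eta\), through \(\log(1/(\alpha\eta))\), so there is a mild self-consistency issue. With the chosen \(\eta\),
\[
\frac1{\alpha\eta}=\frac{\kappa\,(\sqrt{dL}+L)}{c_{\rm opt}}\le\frac{2\kappa(d+L)}{c_{\rm opt}}\,.
\]
Hence \(\log\bigl(Kd\kappa/(\varepsilon\alpha\eta)\bigr)\) is at most a constant multiple of \(\log(K_{\rm opt}d\kappa/\varepsilon)+\log L\). Since \(\log L\le L\), this gives \(\ell\le C_KL\), where \(C_K\) depends only on the universal \(K\) of \cref{thm:main-general}, once \(K_{\rm opt}\ge K\). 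Then
\[
\beta\eta\,(\sqrt{d\ell}+\ell)\le c_{\rm opt}\,C_K\,\frac{\sqrt{dL}+L}{\sqrt{dL}+L}=c_{\rm opt}C_K\,,
\]
and this is at most \(c\) if I take \(c_{\rm opt}\le c/C_K\). I expect this step to be the main (though modest) obstacle: it requires fixing the order of constants, namely first \(K\) and \(c\) from the theorem, then \(K_{\rm opt}\), then \(c_{\rm opt}\), and confirming that the implicit definition of \(\ell\) does not create a circular constraint. With that ordering in place, \eqref{eq:main-TV} yields \(\TV\le\varepsilon\).

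\textbf{Complexity bound.} In \eqref{eq:main-grad-general}, the bracket is \(Q_{\rm prox}+\beta\eta(\sqrt{d\ell}+\ell)\le Q_{\rm prox}+c\). Since \(\ell\le C_KL\), I have \(Q_{\rm prox}=\lceil K\log(\kappa\ell)\rceil\lesssim\log(\kappa L)\), with the usual convention that \(\kappa L\) is bounded below by a constant so that the logarithm is at least one. For the prefactor,
\[
\frac{1}{\sqrt{\alpha\eta}}=\sqrt{\frac{\kappa(\sqrt{dL}+L)}{c_{\rm opt}}}\lesssim\sqrt\kappa\,(dL+L^2)^{1/4}=\sqrt\kappa\,L^{1/4}(d+L)^{1/4}\,.
\]
The remaining factor satisfies \(\sqrt{\log(e/(\alpha\eta))}\lesssim\sqrt{\log(\kappa(d+L))}\lesssim\sqrt L\), using the same bound on \(1/(\alpha\eta)\) as above. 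Multiplying these together gives
\[
\sqrt\kappa\,(d+L)^{1/4}L^{3/4}\Bigl(\Delta+\log\frac4\varepsilon\Bigr)\log(\kappa L)\,,
\]
which is \eqref{eq:tuned-complexity} after absorbing constants into \(K_{\rm opt}\).

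The final \(\widetilde O(\sqrt\kappa\,d^{1/4})\) statement follows because \(L\) is polylogarithmic when \(\Delta=O(1)\).
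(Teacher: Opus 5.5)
Your proposal is correct and follows the paper's proof essentially step for step: check \(0<\eta\le1/\beta\), bound \(\ell\), \(\log(e/(\alpha\eta))\), and \(Q_{\rm prox}\) by multiples of \(L\) and \(\log(\kappa L)\), verify \eqref{eq:main-scale-condition}, and simplify \eqref{eq:main-grad-general} using \(1/\sqrt{\alpha\eta}\lesssim\sqrt\kappa\,(dL+L^2)^{1/4}\). One bookkeeping correction, on a point the paper also glosses over, concerns your claim that \(C_K\) depends only on \(K\): since \(1/(\alpha\eta)=\kappa(\sqrt{dL}+L)/c_{\rm opt}\), \(\ell\) contains \(\log(1/c_{\rm opt})\), so your bound is really \(\ell\le C_K(1+\log(1/c_{\rm opt}))\,L\), uniformly over \(K_{\rm opt}\ge K\). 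The scale condition still closes because \(c_{\rm opt}\log(1/c_{\rm opt})\to0\), and the consistent order is \(K,c\), then \(c_{\rm opt}\), then \(K_{\rm opt}\), since \(K_{\rm opt}\) must also absorb the \(c_{\rm opt}^{-1/2}\) and \(\log(1/c_{\rm opt})\) factors in the final query bound.
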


\begin{proof}
We first verify that the tuned step size satisfies the hypotheses of
\cref{thm:main-general}, and then simplify its query bound.  Since
\(d\ge1\), \(\kappa\ge1\), and \(L\ge1\), the choice in
\eqref{eq:tuned-eta} satisfies \(0<\eta\le1/\beta\).  Moreover,
\[
 \alpha\eta
 =\frac{c_{\rm opt}}{\kappa\,(\sqrt{dL}+L)}\,.
\]
The definitions of \(L\) and \(\ell\) therefore imply, with
\(K_{\rm opt}\) large enough, that
\begin{equation}
 \log\frac{e}{\alpha\eta}\le K_{\rm opt}L\,,
 \qquad
 \ell\le K_{\rm opt}L\,,
 \qquad
 Q_{\rm prox}\le K_{\rm opt}\log(\kappa L)\,.
 \label{eq:tuned-log-check}
\end{equation}
The bound on \(\ell\) and the formula for \(\eta\) give
\begin{equation}
 \beta\eta\,(\sqrt{d\ell}+\ell)
 \le\frac{K_{\rm opt}c_{\rm opt}}{\sqrt{dL}+L}\,
       (\sqrt{dL}+L)
 =K_{\rm opt}c_{\rm opt}\,.
 \label{eq:tuned-scale-check}
\end{equation}
Choosing \(c_{\rm opt}\) small enough makes the last expression no larger
than the constant \(c\) in \cref{thm:main-general}.  This verifies
\eqref{eq:main-scale-condition}, so the total variation conclusion
follows from \cref{thm:main-general}.

It remains to simplify the query bound.  The choice of \(\eta\) gives
\begin{equation}
 \frac1{\sqrt{\alpha\eta}}
 =\sqrt{\frac\kappa{c_{\rm opt}}}\,
   (\sqrt{dL}+L)^{1/2}
 \le C\sqrt\kappa\,(dL+L^2)^{1/4}\,.
 \label{eq:tuned-sqrt-kappa-scale}
\end{equation}
Combining this estimate with the first inequality in
\eqref{eq:tuned-log-check} yields
\[
 \frac1{\sqrt{\alpha\eta}}
 \sqrt{\log\frac e{\alpha\eta}}
 \le C\sqrt\kappa\,(dL+L^2)^{1/4}\sqrt L\,.
\]
The remaining bracket in \eqref{eq:main-grad-general} is, by
\eqref{eq:tuned-log-check} and \eqref{eq:tuned-scale-check}, bounded by
a universal multiple of \(\log(\kappa L)\).  Substitution into
\eqref{eq:main-grad-general}, with
\(K_{\rm opt}\) large enough, proves \eqref{eq:tuned-complexity}.
\end{proof}

\clearpage
\appendix

\section{Conditional half-turn process: construction, invariance, and implementation}
\label{app:halfturn-process}
We first construct the half-turn process up to an arbitrary time $t$ at fixed \((y,\widetilde x)\) in \cref{app:fixed-reference-process}.
We use an integrated hazard construction~\cite[Section~2]{davis-pdmp}
and prove reversibility by a path reversal argument. We then average over the initial momentum and
reference point in \cref{app:averaged-halfturn} to obtain the ideal
half-turn kernel and its augmented-space lift.  Finally,
\cref{app:bounce-capped} treats the bounce count at stationarity, the
tail behaviour of the event rates, and the implementation via a capped rate.

\subsection{The fixed-reference phase space process}
\label{app:fixed-reference-process}

\phantomsection
\label{app:fiber-construction}

\begin{proof}[Proof of the well-posedness assertion in
\cref{prop:fiber-construction}]
Fix \(y,\widetilde x\in\R^d\).  In the notation of
\eqref{eq:original-coordinate-equilibrium-residual}, the bounce rate
\eqref{eq:x-bounce-rate} reads
\begin{equation*}
 \lambda_{\widetilde x}(x,p)
 =\sqrt\eta\,[p^\top h_{\widetilde x}(x)]_+\,.
\end{equation*}
The deterministic harmonic flow, namely the solution
\eqref{eq:explicit-harmonic-flow} of \eqref{eq:x-harmonic-flow}, and
the bounce map are
\begin{align*}
 \Phi_t^{y,\widetilde x}(x,p)
 &\deq
 \bigl(c_{y,\widetilde x}+(x-c_{y,\widetilde x})\cos t
       +\sqrt\eta\,p\sin t,\;
       -\eta^{-1/2}(x-c_{y,\widetilde x})\sin t
       +p\cos t\bigr)\,,\\
 \opS_{\widetilde x}(x,p)
 &\deq(x,R_{h_{\widetilde x}(x)}p)\,,
 \qquad R_0\deq I\,.
\end{align*}
The bounce map is an involution.  Although there is a possible discontinuity at \(h_{\widetilde x}(x)=0\), this creates no ambiguity
because the event rate vanishes there.

Let \((E_n)_{n\ge1}\) be independent \(\operatorname{Exp}(1)\)
random variables.  Starting from
\(\zeta_0\deq (x_0,p_0)\) at time \(T_0 \deq 0\), define recursively
\begin{align}
 S_{n+1}
 &\deq
 \inf\Bigl\{u\ge0:
 \int_0^u\lambda_{\widetilde x}
 (\Phi_s^{y,\widetilde x}(\zeta_{T_n}))\,\dd s\ge E_{n+1}
 \Bigr\}\,,
 \label{eq:jump-time-construction}\\
 T_{n+1}&\deq T_n+S_{n+1}\,,\qquad
 \zeta_{T_{n+1}}
 \deq\opS_{\widetilde x}\bigl(
 \Phi_{S_{n+1}}^{y,\widetilde x}(\zeta_{T_n})\bigr)\,,
 \label{eq:post-jump-construction}
\end{align}
when \(S_{n+1}<\infty\).  If the set is empty, set
\(T_{n+1}=S_{n+1}=\infty\) and stop the recursion.  Between jumps, set
\(\zeta_{T_n+t}=\Phi_t^{y,\widetilde x}(\zeta_{T_n})\) for
\(0\le t<S_{n+1}\).

To prove non-explosion, define the shifted harmonic energy
\begin{equation*}
 H_{y,\widetilde x}(x,p)
 \deq\frac12\,\bigl\{\eta^{-1}\,\|x-c_{y,\widetilde x}\|^2
 +\|p\|^2\bigr\}\,.
\end{equation*}
The flow and each bounce preserve this energy.  Hence, with
\(\mathcal E\deq H_{y,\widetilde x}(x_0,p_0)\), every state produced by
the recursion satisfies
\begin{equation*}
 \|p\|\le\sqrt{2\mathcal E}\,,
 \qquad
 \|x-c_{y,\widetilde x}\|\le\sqrt{2\eta\mathcal E}\,.
\end{equation*}
Because \(\nabla V\) is \(\beta\)-Lipschitz,
\(\lambda_{\widetilde x}(x,p)
\le\bar\lambda_{\mathcal E}(y,\widetilde x)\) throughout the path, where
\begin{equation*}
 \bar\lambda_{\mathcal E}(y,\widetilde x)
 \deq
 \sqrt\eta\,\beta\sqrt{2\mathcal E}\,
 \bigl\{\sqrt{2\eta\mathcal E}
       +\|c_{y,\widetilde x}-\widetilde x\|\bigr\}\,.
\end{equation*}
Hence,
\[
 \int_0^u\lambda_{\widetilde x}
 (\Phi_s^{y,\widetilde x}(\zeta_{T_n}))\,\dd s
 \le \bar\lambda_{\mathcal E}(y,\widetilde x)\,u\,.
\]
If \(\bar\lambda_{\mathcal E}(y,\widetilde x)=0\), the event rate
vanishes and there are no jumps.  Otherwise,
\eqref{eq:jump-time-construction} implies, for every finite waiting time,
\[
 S_{n+1}\ge
 \frac{E_{n+1}}{\bar\lambda_{\mathcal E}(y,\widetilde x)}\,.
\]
If the recursion does not terminate, it follows that
\[
 T_n\ge
 \frac{\sum_{j=1}^nE_j}
      {\bar\lambda_{\mathcal E}(y,\widetilde x)}
 \longrightarrow\infty
\]
almost surely by the strong law of large numbers.  Thus, the jump times
cannot accumulate, and the recursive construction defines a unique
process for all \(t\ge0\).  Finally, the
memoryless property of the exponential clocks gives the time-homogeneous
Markov property.  This is precisely the integrated hazard construction
of Davis~\cite[Section~2]{davis-pdmp}.
\end{proof}

\phantomsection
\label{app:fiber-invariance}

With the process now constructed, we prove the stationarity assertion in
\cref{prop:fiber-construction} and record the stronger operator facts used
to analyze the averaged kernel.  Recall \(\nu_{\eta,y}\) from
\eqref{eq:conditional-position-momentum-measure}, and define momentum
reversal by
\begin{equation*}
 \opR(x,p)\deq(x,-p)\,,
\qquad
 (\opR F)(x,p)\deq F(\opR(x,p))\,.
\end{equation*}
As usual, we use the same symbol for \(\nu_{\eta,y}\) and its Lebesgue
density:
\begin{equation*}
 \nu_{\eta,y}(x,p)
 \propto
 \exp\Bigl\{-V_y(x)-\frac{\|p\|^2}{2}\Bigr\}\,.
\end{equation*}

\begin{proposition}[Path reversal, invariance, and the \(L^2\) semigroup]
\label{prop:fiber-semigroup-rigorous}
Let \((\mathsf T_t^{y,\widetilde x})_{t\ge0}\) be the Markov transition
operators of the process constructed in
\cref{prop:fiber-construction}.  For every \(t\ge0\),
\begin{equation}
 (\mathsf T_t^{y,\widetilde x})^*
 =\opR\mathsf T_t^{y,\widetilde x}\opR
 \quad\text{on }L^2(\nu_{\eta,y})\,.
 \label{eq:skew-detailed-balance-semigroup}
\end{equation}
Consequently:
\begin{enumerate}[label=\textup{(\roman*)},leftmargin=2.2em]
\item \(\nu_{\eta,y}\) is invariant;
\item \((\mathsf T_t^{y,\widetilde x})_{t\ge0}\) is a strongly continuous
contraction semigroup on \(L^2(\nu_{\eta,y})\);
\item if \(\opPi_p:L^2(\nu_{\eta,y})\to
L^2(\pi_\eta^{X\mid Y=y})\) denotes Gaussian momentum averaging, then
\(\opPi_p\mathsf T_t^{y,\widetilde x}\opPi_p^*\) is a
self-adjoint Markov contraction on \(L^2(\pi_\eta^{X\mid Y=y})\).
\end{enumerate}
\end{proposition}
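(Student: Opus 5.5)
The plan is to prove the skew detailed balance identity \eqref{eq:skew-detailed-balance-semigroup} pathwise, by a time-reversal argument on the explicit event-driven construction from the proof of \cref{prop:fiber-construction}, and then derive (i)--(iii) as formal consequences.

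\emph{Path reversal.} Fix \(t>0\) and \(n\ge0\). Consider the set of trajectories on \([0,t]\) with exactly \(n\) jumps, at times \(0<t_1<\dots<t_n<t\), starting from \(\zeta_0=(x_0,p_0)\). Under \(\nu_{\eta,y}(\dd\zeta_0)\) times the path law, the joint density of \((\zeta_0,t_1,\dots,t_n)\) is
\[
\nu_{\eta,y}(\zeta_0)\prod_{i=1}^n\lambda(\zeta_{t_i^-})\exp\Bigl\{-\int_0^t\lambda(\zeta_s)\,\dd s\Bigr\}.
\]
Here \(\lambda=\lambda_{\widetilde x}\), and \(\zeta_s\) is determined by the flow \(\Phi\) and the bounces \(\opS\). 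Define the reversed path \(\zeta'_s\deq\opR\zeta_{t-s}\), with jump times \(t-t_i\); this map is a bijection on such paths. I will verify four facts.

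\begin{enumerate}[label=(\alph*)]
\item The flow satisfies \(\opR\Phi_s\opR=\Phi_{-s}\), because the harmonic vector field is reversible under \(p\mapsto-p\). It also preserves Lebesgue measure (it is Hamiltonian) and preserves the density \(\exp\{-H_{y,\widetilde x}\}\). Moreover, \(\nu_{\eta,y}\) equals \(\exp\{-H_{y,\widetilde x}\}\) times a factor \(e^{-(V(x)-\nabla V(\widetilde x)^\top x)}\) up to constants. This uses that the harmonic energy absorbs the quadratic part and the linear term \(\nabla V(\widetilde x)^\top x\).
\item Along a flow segment, \(\frac{\dd}{\dd s}\,[V(x_s)-\nabla V(\widetilde x)^\top x_s]=\sqrt\eta\,p_s^\top h(x_s)\). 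Writing \(u=u_+-u_-\) with \(u_\pm\ge0\), and noting \(\lambda(\opR\zeta)=\sqrt\eta\,[p^\top h]_-\), we get
\[
\nu_{\eta,y}(\zeta_t)\,e^{-\int\lambda(\zeta)}=\nu_{\eta,y}(\zeta_0)\,e^{-\int\lambda(\opR\zeta)}
\]
on each jump-free segment.
\item At a bounce, \(\opS\) commutes with \(\opR\) and preserves \(\nu_{\eta,y}\), since it preserves \(\|p\|\). It also maps the pre-jump rate to the reversed post-jump rate: \(\lambda(\zeta^-)=\sqrt\eta\,[p^\top h]_+=\sqrt\eta\,[-(R_hp)^\top h]_+=\lambda(\opR\opS\zeta^-)\). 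So the factor \(\lambda(\zeta_{t_i^-})\) matches the reversed path's jump intensity.
\item The change of variables \((\zeta_0,t_i)\mapsto(\zeta'_0,t-t_i)\) has unit Jacobian. This follows because the flow and \(\opS\) (for fixed \(x\), an orthogonal reflection in \(p\)) are volume preserving.
\end{enumerate}

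Multiplying (b) across segments and using (c) shows that the path density is invariant under reversal. Integrating \(F(\zeta_0)G(\zeta_t)\) over paths then gives \(\langle F,\mathsf T_tG\rangle=\langle \opR\mathsf T_t\opR F,G\rangle\), summing over \(n\) using non-explosion.

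\emph{Consequences.} For (i), take \(F\equiv1\): then \(\int\mathsf T_tG\,\dd\nu=\int\opR G\,\dd\nu=\int G\,\dd\nu\), since \(\nu_{\eta,y}\) is \(\opR\)-invariant. For (ii), invariance together with Jensen gives the \(L^2\) contraction. Strong continuity holds first for bounded continuous \(G\): \(\mathsf T_tG\to G\) pointwise, because the probability of a jump before time \(t\) tends to \(0\) by the rate bound \(\bar\lambda_{\mathcal E}\) and the flow is continuous, so dominated convergence applies. It then extends by density and the contraction bound. For (iii), note \(\opPi_p^*g(x,p)=g(x)\) and \(\opPi_p\opR=\opPi_p\), \(\opR\opPi_p^*=\opPi_p^*\), because \(\varphi\) is symmetric. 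Hence
\[
(\opPi_p\mathsf T_t\opPi_p^*)^*=\opPi_p\opR\mathsf T_t\opR\opPi_p^*=\opPi_p\mathsf T_t\opPi_p^*,
\]
and the Markov and contraction properties are inherited from \(\mathsf T_t\).

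\emph{Main obstacle.} The delicate step is making the path-space change of variables rigorous. This means checking the unit Jacobian jointly in \(\zeta_0\) and the jump times, which I would do by induction on \(n\), composing volume-preserving maps. It also means handling the measure-zero sets where \(h=0\) or \(p^\top h=0\), where \(R_h\) is discontinuous; these carry zero rate and can be discarded.
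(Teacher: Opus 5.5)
Your proposal is correct and is essentially the paper's argument. Both use the path-density expansion over jump counts, the density--rate balance $\frac{\dd}{\dd s}\log\nu_{\eta,y}(\zeta_s)=-\{\lambda(\zeta_s)-\lambda(\opR\zeta_s)\}$ along harmonic arcs, the jump matching $\lambda(\zeta^-)=\lambda(\opR\zeta^+)$, and a change of variables in $(\zeta_0,\mathbf t)$ that is measure preserving because each arc and each bounce is a measure-preserving Borel bijection. Both then obtain (i)--(iii) from $F\equiv1$, Jensen, dominated convergence, and $\opPi_p\opR=\opPi_p$.

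There is one slip: the display in (b) has its endpoints swapped. Integrating that balance gives $\nu_{\eta,y}(\zeta_0)\,e^{-\int_0^t\lambda(\zeta_s)\,\dd s}=\nu_{\eta,y}(\zeta_t)\,e^{-\int_0^t\lambda(\opR\zeta_s)\,\dd s}$, which is exactly the form needed to match a forward path with its reversal. Your version fails on an uphill arc, where $\lambda(\opR\zeta_s)=0$ and $\nu_{\eta,y}(\zeta_t)<\nu_{\eta,y}(\zeta_0)$.
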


\paragraph{Idea of the proof.}
The identity \eqref{eq:skew-detailed-balance-semigroup} is a
momentum-reversed form of detailed balance.  To see the mechanism
formally, write
\(\opL_{y,\widetilde x}=\mathcal A+\mathcal J\), where
\begin{align*}
 \mathcal A f(x,p)
 &\deq
 \sqrt\eta\,p^\top\nabla_x f(x,p)
 -\frac{(x-c_{y,\widetilde x})^\top}{\sqrt\eta}
  \nabla_p f(x,p),\\
 \mathcal J f(x,p)
 &\deq
 \lambda_{\widetilde x}(x,p)
 \bigl\{f(\opS_{\widetilde x}(x,p))-f(x,p)\bigr\}.
\end{align*}
Momentum reversal reverses the harmonic flow, so
\(\opR\mathcal A\opR=-\mathcal A\).  Moreover,
\[
 \mathcal A\log\nu_{\eta,y}(x,p)
 =-\sqrt\eta\,p^\top h_{\widetilde x}(x)
 =-\lambda_{\widetilde x}(x,p)+\lambda_{\widetilde x}(x,-p).
\]
Formal integration by parts gives
\[
 \mathcal A^*=-\mathcal A+\lambda_{\widetilde x}
  -\lambda_{\widetilde x}\circ\opR.
\]
The bounce map \(\opS_{\widetilde x}\) is a
\(\nu_{\eta,y}\)-preserving involution, commutes with \(\opR\), and
satisfies \(\lambda_{\widetilde x}\circ\opS_{\widetilde x}
=\lambda_{\widetilde x}\circ\opR\).  A formal change of variables
therefore gives
\[
 \mathcal J^*G
 =(\lambda_{\widetilde x}\circ\opR)
    (G\circ\opS_{\widetilde x})
  -\lambda_{\widetilde x}G.
\]
Combining the two identities yields
\[
 \opL_{y,\widetilde x}^*G
 =-\mathcal AG
  +(\lambda_{\widetilde x}\circ\opR)
      (G\circ\opS_{\widetilde x}-G)
 =(\opR\opL_{y,\widetilde x}\opR)G.
\]
Formally exponentiating this identity gives
\((\mathsf T_t^{y,\widetilde x})^*
 =\opR\mathsf T_t^{y,\widetilde x}\opR\); Gaussian momentum averaging
then removes \(\opR\) and produces an ordinary self-adjoint position
kernel.

This generator calculation is only heuristic as stated, as the standard arguments are not
immediately applicable because
\(\opS_{\widetilde x}\) may be discontinuous where
\(h_{\widetilde x}(x)=0\).  The pathwise argument below proves the same
identity without requiring differentiability or continuity of the
bounce map.

\begin{proof}
Along
\(\zeta_s=(x_s,p_s)=\Phi_s^{y,\widetilde x}(\zeta_0)\),
\begin{equation}
 \frac{\dd}{\dd s}\log\nu_{\eta,y}(\zeta_s)
 =-\sqrt\eta\,p_s^\top h_{\widetilde x}(x_s)
 =-\bigl\{\lambda_{\widetilde x}(\zeta_s)
      -\lambda_{\widetilde x}(\opR\zeta_s)\bigr\}\,.
 \label{eq:density-rate-balance}
\end{equation}
Moreover, by definition of $\opR$ and $\Phi_t$,
\begin{equation*}
 \opR\Phi_t^{y,\widetilde x}\opR
 =\Phi_{-t}^{y,\widetilde x}\,,
 \qquad
 \opR\opS_{\widetilde x}
 =\opS_{\widetilde x}\opR\,.
\end{equation*}
If \(\zeta^+=\opS_{\widetilde x}\zeta^-\) is a non-trivial jump, then
\(h_{\widetilde x}(x)\ne0\),
\( (p^-)^\top  h_{\widetilde x}(x) >0\), and
\begin{equation}
 \lambda_{\widetilde x}(\zeta^-)
 =\lambda_{\widetilde x}(\opR\zeta^+)\,.
 \label{eq:jump-rate-reversal}
\end{equation}
The flow and \(\opR\) preserve Lebesgue measure, as does the possibly
discontinuous bounce map: by Fubini's theorem and orthogonality, for
every integrable \(F\),
\begin{equation}
 \int F(\opS_{\widetilde x}(x,p))\,\dd x\,\dd p
 =\int F(x,p)\,\dd x\,\dd p\,.
 \label{eq:jump-map-volume}
\end{equation}

To pass from these identities to complete trajectories, let
\[
 \Delta_n(t)\deq\{(t_1,\ldots,t_n):0<t_1<\cdots<t_n<t\}\,.
\]
Given \(\zeta_0\) and \(\mathbf t\in\Delta_n(t)\), let
\(\zeta_s^{\zeta_0,\mathbf t}\) be the path obtained by following
\(\Phi^{y,\widetilde x}\) and applying \(\opS_{\widetilde x}\) at the
prescribed times.  Iterating the exponential clock construction gives,
for every bounded measurable functional \(\Psi\) of the path up to time
\(t\),
\begin{align}
 \E_{\zeta_0}\Psi(\zeta_{[0,t]})
 &=
 \sum_{n=0}^{\infty}
 \int_{\Delta_n(t)}
 \Psi(\zeta_{[0,t]}^{\zeta_0,\mathbf t})
 \exp\Bigl\{-\!\int_0^t
 \lambda_{\widetilde x}
 (\zeta_s^{\zeta_0,\mathbf t})\,\dd s\Bigr\}
 \prod_{j=1}^n
 \lambda_{\widetilde x}
 (\zeta_{t_j-}^{\zeta_0,\mathbf t})
 \,\dd\mathbf t\,.
 \label{eq:complete-path-density}
\end{align}
The series is absolutely convergent because
\(\lvert\Delta_n(t)\rvert=t^n/n!\) and the rate
is bounded by \(\bar\lambda_{\mathcal E}(y,\widetilde x)\).
Integrate \eqref{eq:complete-path-density} against
\(\nu_{\eta,y}(\dd \zeta_0)\) and apply
\begin{equation}
 (\zeta_s)_{0\le s\le t}
 \longmapsto(\opR\zeta_{t-s})_{0\le s\le t}\,,
 \qquad
 (t_1,\ldots,t_n)\longmapsto(t-t_n,\ldots,t-t_1)\,.
 \label{eq:path-reversal-map}
\end{equation}
For completeness, we justify this change of variables despite the
possible discontinuity of the bounce map.  For fixed
\(\mathbf t\in\Delta_n(t)\), the endpoint map
\(\zeta_0\mapsto\zeta_t^{\zeta_0,\mathbf t}\) is a finite composition
of the Borel involution \(\opS_{\widetilde x}\) and the affine
harmonic flows \(\Phi_u^{y,\widetilde x}\).  By
\eqref{eq:jump-map-volume} and the explicit flow formula, each factor is
a measure-preserving Borel bijection.  Hence the endpoint map, and
therefore
\(\zeta_0\mapsto\opR\zeta_t^{\zeta_0,\mathbf t}\), is a Borel
measure-preserving bijection.  The identities preceding
\eqref{eq:complete-path-density} show that its inverse is obtained by
using the reversed ordered times in \eqref{eq:path-reversal-map}.  The
map on \(\Delta_n(t)\) is linear with absolute determinant one.
Thus the full change of variables is measure preserving; no Jacobian is needed.

Integrating \eqref{eq:density-rate-balance} over the deterministic arcs
and using that a reflection preserves \(\nu_{\eta,y}\) gives
\begin{equation*}
 \nu_{\eta,y}(\zeta_0)\,
 e^{-\int_0^t\lambda_{\widetilde x}(\zeta_s)\,\dd s}
 =
 \nu_{\eta,y}(\zeta_t)\,
 e^{-\int_0^t\lambda_{\widetilde x}(\opR\zeta_s)\,\dd s}\,.
\end{equation*}
This identity matches the initial density and survival factor of a
forward path with those of its reversal, while
\eqref{eq:jump-rate-reversal} matches every jump-rate factor.  Hence,
each \(n\)-jump term in \eqref{eq:complete-path-density} is unchanged by
\eqref{eq:path-reversal-map}.  Tonelli's theorem justifies the termwise
change of variables for non-negative bounded functions; decomposing
signed functions into positive and negative parts then gives, for
bounded measurable \(F,G\),
\begin{equation*}
 \int F\,\mathsf T_t^{y,\widetilde x}G\,\dd\nu_{\eta,y}
 =
 \int (\opR G)\,
       \mathsf T_t^{y,\widetilde x}(\opR F)\,\dd\nu_{\eta,y}\,.
\end{equation*}
Taking \(F=1\) proves invariance, and the identity extends by density to
\eqref{eq:skew-detailed-balance-semigroup} on \(L^2(\nu_{\eta,y})\).

Invariance and Jensen's inequality give
\(\|\mathsf T_t^{y,\widetilde x}F\|_2\le\|F\|_2\).  It remains to prove
strong continuity.  For \(F\in C_{\rm c}(\R^{2d})\), the deterministic flow
converges to the identity as $t\searrow 0$, and the probability of a jump before time
\(t\) is
\[
 1-\exp\Bigl\{-\int_0^t
 \lambda_{\widetilde x}
 (\Phi_s^{y,\widetilde x}\zeta)\,\dd s\Bigr\}\longrightarrow0\,.
\]
Dominated convergence therefore gives
\(\mathsf T_t^{y,\widetilde x}F\to F\) in \(L^2\).  Density of
\(C_{\rm c}(\R^{2d})\) and contractivity extend this to all \(F\in L^2\).

Finally, Gaussian momentum averaging satisfies
\(\opR\opPi_p^*=\opPi_p^*\) and
\(\opPi_p\opR=\opPi_p\).  Combining these identities with
\eqref{eq:skew-detailed-balance-semigroup} gives
\[
 (\opPi_p\mathsf T_t^{y,\widetilde x}\opPi_p^*)^*
 =\opPi_p(\mathsf T_t^{y,\widetilde x})^*\opPi_p^*
 =\opPi_p\opR\mathsf T_t^{y,\widetilde x}\opR\opPi_p^*
 =\opPi_p\mathsf T_t^{y,\widetilde x}\opPi_p^*\,.
\]
This proves self-adjointness; the Markov and contraction properties are
inherited from \(\mathsf T_t^{y,\widetilde x}\).
\end{proof}

\subsection{The averaged ideal half-turn kernel}
\label{app:averaged-halfturn}

\phantomsection
\label{app:proof-half-invariance}
\begin{proof}[Proof of \cref{prop:half-invariance}]
Fix \(y\).  For each reference point \(\widetilde x\),
\cref{prop:fiber-semigroup-rigorous} shows that the fixed-reference
position operator
\(\opPi_p\mathsf T_\pi^{y,\widetilde x}\opPi_p^*\) is a self-adjoint
Markov contraction on \(L^2(\pi_\eta^{X\mid Y=y})\).  The reference
point in \cref{alg:ideal-half} is drawn independently from
\(\pi_\eta^{X\mid Y=y}\).  Consequently, the conditional half-turn
operator is the average
\[
 (\opH_y f)(x)
 =
 \int_{\R^d}
 \bigl(\opPi_p\mathsf T_\pi^{y,\widetilde x}\opPi_p^*f\bigr)(x)\,
 \pi_\eta^{X\mid Y=y}(\dd\widetilde x)\,.
\]
The averaging measure does not depend on \(x\), so \(\opH_y\) is again a
self-adjoint Markov contraction.  In kernel form, this is the
detailed-balance identity
\begin{equation*}
 \pi_\eta^{X\mid Y=y}(\dd x)\,\mathcal H_y(x,\dd x')
 =\pi_\eta^{X\mid Y=y}(\dd x')\,\mathcal H_y(x',\dd x)\,,
\end{equation*}
which proves the proposition.
\end{proof}

The preceding reversibility statement holds for each fixed \(y\).  To use
these kernels as a single operator on the augmented space, we next verify
their joint measurability and then integrate the conditional operator
identities over \(y\).

\begin{proposition}[Joint measurability and augmented space half-turn operator]
\label{prop:joint-halfturn-operator}
The family of conditional half-turn kernels
\((\mathcal H_y)_{y\in\R^d}\) admits a version, again denoted by
\(\mathcal H_y(x,\dd x')\), such that
\((y,x)\mapsto\mathcal H_y(x,A)\) is Borel measurable for every
Borel set \(A\subset\R^d\).  Consequently,
\begin{equation}
 (\opH f)(x,y)
 \deq\int_{\R^d}f(x',y)\,\mathcal H_y(x,\dd x')
 \label{eq:joint-halfturn-operator}
\end{equation}
defines a self-adjoint Markov contraction on \(L^2(\pi_\eta)\).
\end{proposition}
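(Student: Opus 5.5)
The plan is to build \(\mathcal H_y\) explicitly from the pathwise construction in \cref{app:fixed-reference-process}, with every ingredient written as a jointly Borel function of \((y,\widetilde x,x_0,p_0,E_1,E_2,\ldots)\). After that, measurability of \((y,x)\mapsto\mathcal H_y(x,A)\) follows from Fubini--Tonelli. Concretely, fix an i.i.d.\ sequence \((E_n)\) of \(\operatorname{Exp}(1)\) variables on a product probability space. Let \(\Xi(y,\widetilde x,x_0,p_0,\mathbf E)\) be the time-\(\pi\) position produced by the recursion \eqref{eq:jump-time-construction}--\eqref{eq:post-jump-construction}. Then set
\[
 \mathcal H_y(x,A)\deq\int\!\!\int\!\!\int \1_A\bigl(\Xi(y,\widetilde x,x,p,\mathbf E)\bigr)\,\pi_\eta^{X\mid Y=y}(\dd\widetilde x)\,\varphi(\dd p)\,\Prob(\dd\mathbf E)\,.
\]
For each fixed \(y\) this equals the kernel of \cref{prop:half-invariance}, so it is a version of that kernel.

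The key step is to show that \(\Xi\) is jointly Borel. We do this by induction on the jump index. The flow \(\Phi^{y,\widetilde x}_s(\zeta)\) is continuous in \((y,\widetilde x,s,\zeta)\), because \(c_{y,\widetilde x}\) is continuous. The rate \(\lambda_{\widetilde x}\) is continuous, so the integrated hazard \(u\mapsto\int_0^u\lambda\circ\Phi_s\,\dd s\) is continuous and nondecreasing in \(u\) and jointly continuous in all parameters. The hitting time \(S_{n+1}\) is therefore Borel: the event \(\{S_{n+1}\le u\}=\{\int_0^u\cdots\ge E_{n+1}\}\) is Borel for each rational \(u\), and right-continuity handles the rest. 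The bounce map \(\opS_{\widetilde x}(x,p)\) is Borel in \((\widetilde x,x,p)\): it is continuous off \(\{h_{\widetilde x}(x)=0\}\) and the identity on that closed set. Compositions therefore preserve joint measurability of \(T_n\) and \(\zeta_{T_n}\). By non-explosion (\cref{prop:fiber-construction}), only finitely many jumps occur before \(\pi\) almost surely. Hence \(\Xi=\lim_n\) of the Borel functions ``position at \(\pi\) given the first \(n\) jumps'', which is an everywhere-defined limit off a null set, and we set \(\Xi\) to a fixed value on that set.

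Next we handle the averaging over \(\widetilde x\). The measure \(\pi_\eta^{X\mid Y=y}\) has density \(e^{-V_y(\widetilde x)}/Z(y)\), which is jointly continuous in \((y,\widetilde x)\), and \(Z(y)>0\) is continuous in \(y\). So the integrand above is a nonnegative Borel function of \((y,x,\widetilde x,p,\mathbf E)\) integrated against a Borel density times fixed measures, and Tonelli gives Borel measurability in \((y,x)\). A monotone class argument shows that \((y,x)\mapsto\int g(x')\,\mathcal H_y(x,\dd x')\) is Borel for bounded Borel \(g\), and likewise for functions \(f(x',y)\) jointly Borel. Thus \(\opH f\) in \eqref{eq:joint-halfturn-operator} is well defined and measurable.

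For the operator claims, we disintegrate \(\pi_\eta(\dd x\,\dd y)=\pi_\eta^Y(\dd y)\,\pi_\eta^{X\mid Y=y}(\dd x)\). Jensen's inequality applied fiberwise with the invariance of \cref{prop:half-invariance} gives \(\|\opH f\|^2\le\int\|f(\cdot,y)\|^2_{L^2(\pi^{X\mid Y=y}_\eta)}\pi_\eta^Y(\dd y)=\|f\|^2\). Self-adjointness follows by integrating the fiberwise detailed-balance identity over \(y\), first for bounded \(f,g\) and then by density. The Markov property (positivity and \(\opH1=1\)) is immediate. The main obstacle is the joint Borel measurability of the hitting times and of the discontinuous bounce map, because the well-posedness proof was done at fixed \((y,\widetilde x)\) and never tracked dependence on these parameters.
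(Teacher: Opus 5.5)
Your proposal is correct and follows essentially the same route as the paper's proof. The steps match: joint Borel measurability of the flow, rate and bounce map (using \(R_0=I\)); Borel hitting times obtained from rational thresholds of the continuous, nondecreasing integrated hazard; induction over jumps; and non-explosion to pass to the limit, with a fixed value assigned on the exceptional null set. After that comes Tonelli over the reference point, momentum and clocks, and finally fiberwise Jensen and detailed balance integrated over \(y\). Your extra details are fine elaborations of steps the paper states briefly: the closedness of \(\{h_{\widetilde x}(x)=0\}\), and the monotone class extension to observables \(f(x',y)\).
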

\begin{proof}
Since \(\nabla V\) is continuous, the harmonic flow, event rate, and
reflection map are jointly Borel; the convention \(R_0=I\) ensures
this even where the reflection normal vanishes.  The integrated hazard
is jointly Borel, with continuous, nondecreasing dependence on time, so its
threshold-crossing times are Borel, as can be checked using rational
times.  Thus the recursion
\eqref{eq:jump-time-construction}--\eqref{eq:post-jump-construction}
makes each jump time and post-jump state Borel in
\((y,x_0,p_0,\widetilde x,(E_j)_{j\ge1})\).
By non-explosion, the terminal state at time \(\pi\) is the almost-sure
limit of the Borel terminal states obtained by allowing at most \(n\)
jumps.  Setting the limit to zero wherever it does not exist gives a
Borel version.

The conditional density in \eqref{eq:conditional-prox-law} is jointly
Borel in \((y,\widetilde x)\).  Integrating \(\1_{\{x_\pi\in A\}}\)
against this reference law and the laws of the Gaussian momentum and
exponential clocks therefore makes \((y,x)\mapsto\mathcal H_y(x,A)\)
Borel for every Borel set \(A\).

Finally, each \(\opH_y\) is a self-adjoint Markov contraction by
\cref{prop:half-invariance}.  Integrating the conditional contraction
bound gives
\begin{align*}
 \|\opH f\|_{L^2(\pi_\eta)}^2
 &\le\int\bigl\|f(\cdot,y)\bigr\|_{L^2(\pi_\eta^{X\mid Y=y})}^2\,
       \pi_\eta^Y(\dd y)
 =\|f\|_{L^2(\pi_\eta)}^2\,.
\end{align*}
Conditional self-adjointness and Fubini likewise give
\(\langle f,\opH g\rangle=\langle\opH f,g\rangle\) for bounded
measurable \(f,g\), and hence for all \(L^2(\pi_\eta)\) by density.
Positivity and \(\opH1=1\) follow from the corresponding properties of
each \(\opH_y\).
\end{proof}

\subsection{Bounce complexity and capped-rate implementation}
\label{app:bounce-capped}

\phantomsection
\label{app:proof-ideal-bounce-count}

\cref{prop:ideal-bounce-count} follows immediately from the following lemma. More specifically, for every fixed
\(y,\widetilde x\), \cref{prop:fiber-semigroup-rigorous} shows that the
phase space process preserves \(\nu_{\eta,y}\).  Hence, under the
initialization in the proposition and conditional on \(Y=y\), the triple
\((x_t,p_t,\widetilde X)\) has law
\(\pi_\eta^{X\mid Y=y}\otimes\cN(0,I)\otimes\pi_\eta^{X\mid Y=y}\) at
every time \(t\in[0,\pi]\).

\begin{lemma}[Tail bound for the event rate]
\label{lem:event-tail}
Let \(X,\widetilde X\stackrel{\rm iid}{\sim}\pi_\eta^{X\mid Y=y}\) and
\(P\sim\cN(0,I)\) be independent, and set $\lambda\deq\sqrt\eta
 \,[P^\top(\nabla V(X)-\nabla V(\widetilde X))]_+$.
For every \(u\ge0\),
\begin{equation}
 \Prob\bigl\{\lambda>C\beta\eta\,(\sqrt{du}+u)\bigr\}
 \le Ce^{-u}\,.
 \label{eq:event-tail}
\end{equation}
Consequently, for \(\ell\ge0\) and
\(\bar\lambda_\ell=C\beta\eta\,(\sqrt{d\ell}+\ell)\),
\begin{equation}
 \E(\lambda-\bar\lambda_\ell)_+
 \le C\beta\eta\sqrt d\,e^{-c\ell}\,.
 \label{eq:event-overflow}
\end{equation}
\end{lemma}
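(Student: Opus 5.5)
The plan is to condition on \((X,\widetilde X)\) and use the Gaussianity of \(P\). Set \(D\deq\nabla V(X)-\nabla V(\widetilde X)\). Given \(D\), the scalar \(P^\top D\) is \(\cN(0,\|D\|^2)\). Hence
\[
 \Prob\bigl\{\sqrt\eta\,[P^\top D]_+>\sqrt\eta\,\|D\|\sqrt{2u}\bigm| D\bigr\}\le e^{-u}\,.
\]
By \(\beta\)-smoothness, \(\|D\|\le\beta\,\|X-\widetilde X\|\). It therefore suffices to prove a tail bound of the form
\[
 \Prob\bigl\{\|X-\widetilde X\|>C\sqrt\eta\,(\sqrt d+\sqrt u)\bigr\}\le Ce^{-u}\,.
\]
Granting this, we get \(\lambda\le C\beta\eta\,(\sqrt d+\sqrt u)\sqrt{2u}\le C\beta\eta\,(\sqrt{du}+u)\) outside an event of probability at most \(Ce^{-u}\), which is \eqref{eq:event-tail}.

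For the distance tail, I would use \cref{prop:augmentation}(i). The conditional law \(\pi_\eta^{X\mid Y=y}\) is \((\alpha+\eta^{-1})\)-strongly log-concave, hence at least \(\eta^{-1}\)-strongly log-concave. Its covariance is at most \(\eta I\) by \eqref{eq:conditional-covariance-prelim}. Let \(m\) be its mean. Then \(\E\|X-m\|\le\sqrt{\E\|X-m\|^2}\le\sqrt{d\eta}\). The map \(x\mapsto\|x-m\|\) is \(1\)-Lipschitz, so Gaussian concentration for strongly log-concave measures (the concentration tool \eqref{eq:concentration-tool} in Appendix~\ref{app:tools}) gives
\[
 \Prob\bigl\{\|X-m\|>\sqrt{d\eta}+\sqrt{2\eta u}\bigr\}\le e^{-u}\,.
\]
The same bound holds for \(\widetilde X\), and a triangle inequality with a union bound finishes this step. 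Alternatively, I can apply concentration directly to the \(\sqrt2\)-Lipschitz function \(\|x-\widetilde x\|\) under the product measure, which is also \(\eta^{-1}\)-strongly log-concave.

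For \eqref{eq:event-overflow}, I would use the layer-cake formula
\[
 \E(\lambda-\bar\lambda_\ell)_+=\int_{\bar\lambda_\ell}^\infty\Prob\{\lambda>s\}\,\dd s\,.
\]
Substitute \(s=C\beta\eta\,(\sqrt{du}+u)\) for \(u\ge\ell\), so that
\[
 \dd s=C\beta\eta\,\Bigl(\tfrac12\sqrt{d/u}+1\Bigr)\,\dd u\,.
\]
Using \eqref{eq:event-tail}, the integral is bounded by
\[
 C\beta\eta\int_\ell^\infty\Bigl(\sqrt{d/u}+1\Bigr)e^{-u}\,\dd u\,.
\]
For \(\ell\ge1\) this is at most \(C\beta\eta\,(\sqrt d+1)e^{-\ell}\le C\beta\eta\sqrt d\,e^{-\ell}\). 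For small \(\ell\), the term \(\int_0^\infty u^{-1/2}e^{-u}\,\dd u=\sqrt\pi\) is finite, so the bound holds with a smaller constant \(c\) in the exponent.

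The main obstacle is bookkeeping: choosing the constant \(C\) consistently so that the cap in \eqref{eq:event-overflow} uses the same \(C\) as in \eqref{eq:event-tail}. Handling the integrable \(u^{-1/2}\) singularity at \(\ell\) near \(0\) is what forces the weaker exponent \(c\), absorbed as \(e^{-c\ell}\). No conceptual difficulty is expected.
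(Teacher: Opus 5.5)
Your proposal is correct and follows essentially the paper's proof: concentration for the $(\alpha+\eta^{-1})$-strongly log-concave conditional law together with $\beta$-smoothness controls $\|\nabla V(X)-\nabla V(\widetilde X)\|$; a conditional Gaussian tail for $P^\top(\nabla V(X)-\nabla V(\widetilde X))$ plus a union bound then gives \eqref{eq:event-tail}; and integrating that tail gives \eqref{eq:event-overflow}. Your Chernoff threshold $\sqrt{2u}$ works for all $u\ge0$, so you do not need the paper's separate handling of $0\le u<1$, and for small $\ell$ the $u^{-1/2}$ singularity only forces a larger $C$ rather than a weaker exponent.
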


\begin{proof}
The measure \(\pi_\eta^{X\mid Y=y}\) is
\((\alpha+\eta^{-1})\)-strongly log-concave, and therefore
\eqref{eq:concentration-tool} gives
\[
 \Prob\bigl\{\|X-\E X\|\ge C\sqrt{\eta\,(d+u)}\bigr\}
 \le e^{-u}
\]
for every \(u\ge0\).
Therefore,
\begin{equation}
 \Prob\bigl\{
 \sqrt\eta\,\|\nabla V(X)-\nabla V(\widetilde X)\|
 >2C\beta\eta\sqrt{d+u}
 \bigr\}\le 2e^{-u}\,.
 \label{eq:event-normal-tail}
\end{equation}
Conditional on
\(h=\sqrt\eta\,\{\nabla V(X)-\nabla V(\widetilde X)\}\), the scalar
\(P^\top h\sim \cN(0,\|h\|^2)\).  The Gaussian tail and a union bound
with \eqref{eq:event-normal-tail} therefore show that, for $u\ge 1$ and outside an event
of probability at most \(2e^{-u}\),
\[
    [P^\top h]_+
 \le C\sqrt u\,\|h\|
 \le C\beta\eta\,(\sqrt{du}+u)\,,
\]
which proves \eqref{eq:event-tail} in this range.
The bound for \(0\le u<1\) follows after increasing \(C\) accordingly.
Finally, \eqref{eq:event-overflow} follows by integration.
\end{proof}

\phantomsection
\label{app:proof-capped-rate}
\begin{proof}[Proof of \cref{prop:capped-rate}]
For part~\textup{(i)}, first suppose that \((X_0,Y)\sim\pi_\eta\), and write
\(\Prob_{\pi_\eta}\) and \(\E_{\pi_\eta}\) for probability and
expectation under the resulting coupled construction, including the
reference point, initial momentum, and Poisson random measure shared by
the ideal and capped processes.  Let \(\mathcal D\) be the event that
the two paths separate.  The paths agree until the first event from
the excess rate \((\lambda_t-\bar\lambda_{\xi,\Delta})_+\), so the
probability of \(\mathcal D\) is bounded by the expected number of
such events:
\[
 \Prob_{\pi_\eta}(\mathcal D)
 \le
 \E_{\pi_\eta}\int_0^\pi
 (\lambda_t-\bar\lambda_{\xi,\Delta})_+\,\dd t\,.
\]
By stationarity, \eqref{eq:event-overflow} applies at every \(t\).  Since the
small-scale assumption implies
\(\beta\eta\sqrt d\le c_{\rm evt}\), we obtain
\[
 \Prob_{\pi_\eta}(\mathcal D)
 \le Ce^{-c\ell_{\xi,\Delta}}
 \le e^{-\Delta}\xi^2\,.
\]
The definition of \(\ell_{\xi,\Delta}\) in \eqref{eq:clock-cap} gives
the last inequality after increasing \(K_{\rm evt}\).

Now let \(\nu_0\) satisfy
\(D_2(\nu_0\|\pi_\eta)\le\Delta\), and set
\(h_0=\dd\nu_0/\dd\pi_\eta\).  Relative to the coupled path law
\(\Prob_{\pi_\eta}\), the construction started from \(\nu_0\) has
likelihood ratio \(h_0(X_0,Y)\): all reference points, momenta, and Poisson
variables are generated by the same conditional rules.  Hence,
Cauchy--Schwarz gives
\[
 \Prob_{\nu_0}(\mathcal D)
 =\E_{\pi_\eta}[h_0(X_0,Y)\1_{\mathcal D}]
 \le(\E_{\pi_\eta}h_0^2)^{1/2}\,
      \Prob_{\pi_\eta}(\mathcal D)^{1/2}
 \le e^{\Delta/2}\,(e^{-\Delta}\xi^2)^{1/2}=\xi\,.
\]
This proves part~\textup{(i)}.

For part~\textup{(ii)}, the number of arrivals of the
rate-\(\bar\lambda_{\xi,\Delta}\) proposal clock before time \(\pi\)
is Poisson with mean \(\pi\bar\lambda_{\xi,\Delta}\).  Since the
reference gradient is already cached, the capped simulation makes
exactly one gradient query at each arrival.  This proves
\eqref{eq:half-query-cost}.
\end{proof}

\section{Operator formulation and modified \texorpdfstring{\(L^2\)}{L2} contraction}
\label{sec:micro-macro}

This appendix proves \cref{thm:ideal-convergence} using the modified \(L^2\)
hypocoercivity framework of Dolbeault, Mouhot, and Schmeiser~\cite{dms} and Fan, Li, and Lu~\cite{fan-li-lu}.  We first set
up the operator framework and then derive the contraction from two
model-specific estimates proved in Appendix~\ref{app:key-estimates}.

\subsection{Operator framework and micro--macro decomposition}
\label{app:operator-framework}

Our target distribution is the joint law \(\pi_\eta\) defined in
\eqref{eq:augmentation}, and we set \(\cH\deq L^2(\pi_\eta)\).  Recall from
\eqref{eq:ideal-proximal-bps-kernel} that the ideal Proximal BPS has
transition kernel
\begin{equation*}
 \mathcal K\bigl((x,y),\dd x'\,\dd y'\bigr)
 =\delta_{2x-y}(\dd y')\,
 \bigl[
  \rho\,\pi_\eta^{X\mid Y=y'}(\dd x')
  +(1-\rho)\,\mathcal H_{y'}(x,\dd x')
 \bigr]\,,
\end{equation*}
where \(\mathcal H_{y'}\) is the conditional half-turn kernel from
\cref{prop:half-invariance}.  Since the algorithm is a discrete-time
chain, its evolution is described by the induced one-step Markov operator
on observables,
\begin{equation*}
 (\opK f)(x,y)
 \deq\int_{\R^{2d}}f(x',y')\,
   \mathcal K\bigl((x,y),\dd x'\,\dd y'\bigr)\,.
\end{equation*}
Thus \(\mathcal K\) acts on probability laws, whereas \(\opK\) is its
induced action on observables.  The first term in brackets
in \(\mathcal K\) is the conditional resampling branch.  After the
auxiliary coordinate is updated to \(y'\),
this branch replaces the \(x\)-coordinate by an independent draw from
\(\pi_\eta^{X\mid Y=y'}\).  The associated conditional expectation operator
is
\begin{equation}
 (\opP f)(x,y)
   \deq\int f(x',y)\,\pi_\eta^{X\mid Y=y}(\dd x')
     =\E[f(X,Y)\mid Y=y]\,.
 \label{eq:macroscopic-projection}
\end{equation}
In particular, \(\opP f\) depends only on the coordinate \(y\).  The range
and kernel of \(\opP\) are
\begin{equation}
 \begin{split}
 \cH_{\mathsf P}
   &\deq\ran(\opP)
     =\{f\in\cH:f(x,y)=g(y)
          \text{ for some }g\in L^2(\pi_\eta^Y)\}\,,\\
 \cH_\perp
   &\deq\ker(\opP)=\ran(\opP^\perp)
     =\{f\in\cH:\E[f(X,Y)\mid Y]=0\}\,.
 \end{split}
 \label{eq:micro-macro-spaces}
\end{equation}
Here \(\opP^\perp\deq I-\opP\), so
\(\cH=\cH_{\mathsf P}\oplus\cH_\perp\) and
\(f=\opP f+\opP^\perp f\).  In the common terminology of
hypocoercivity~\cite{dms,lu-wang}, \(\ran(\opP)\) is the
\emph{macroscopic} subspace and its orthogonal complement is the
\emph{microscopic} subspace.
Thus functions in \(\cH_{\mathsf P}\) depend only on the coordinate \(y\),
whereas functions in \(\cH_\perp\) have zero mean under
\(\pi_\eta^{X\mid Y=y}\) for \(\pi_\eta^Y\)-almost every \(y\).

Relative to the decomposition
\(\cH=\cH_{\mathsf P}\oplus\cH_\perp\), we write an operator
\(\mathsf T\) on \(\cH\) in block form as
\begin{equation}
 \mathsf T=
 \begin{pmatrix}
  \mathsf T_{\mathsf P\mathsf P} & \mathsf T_{\mathsf P\perp}\\
  \mathsf T_{\perp\mathsf P} & \mathsf T_{\perp\perp}
 \end{pmatrix}\,,
 \qquad
 \begin{aligned}
  \mathsf T_{\mathsf P\mathsf P}&\deq\opP\mathsf T\opP\,,&
  \mathsf T_{\mathsf P\perp}&\deq\opP\mathsf T\opP^\perp\,,\\
  \mathsf T_{\perp\mathsf P}&\deq\opP^\perp\mathsf T\opP\,,&
  \mathsf T_{\perp\perp}&\deq\opP^\perp\mathsf T\opP^\perp\,.
 \end{aligned}
 \label{eq:projection-block-convention}
\end{equation}
The first subscript specifies the target subspace and the second the source
subspace; thus \(\mathsf T_{ab}:\cH_b\to\cH_a\) for
\(a,b\in\{\mathsf P,\perp\}\).

We first apply this block notation to the reflection map \(\opU\) from
\eqref{eq:easy-reflection}.  We use the same symbol for its pullback on
observables:
\begin{equation}
 (\opU f)(x,y)\deq f(\opU(x,y))=f(x,2x-y)\,.
 \label{eq:reflection-operator}
\end{equation}
This pullback is self-adjoint and unitary, and hence has block matrix
\[
 \opU=
 \begin{pmatrix}
  \opUPP & \opUperpP^*\\
  \opUperpP & \opUperpperp
 \end{pmatrix}\,.
\]
Here \(\opUPP=\opP\opU\opP\) maps \(\cH_{\mathsf P}\) to itself, whereas
\(\opUperpP=\opP^\perp\opU\opP\) maps \(\cH_{\mathsf P}\) to \(\cH_\perp\).
Since
\(\opU\) is self-adjoint and unitary, \(\opUPP\) is a self-adjoint
contraction on \(\cH_{\mathsf P}\).  Writing \(\opU^2=I\) in blocks gives
\begin{equation}
 \opUperpP^*\opUperpP=I-\opUPP^2\,,
 \qquad
 \opUperpP^*\opUperpperp=-\opUPP\opUperpP^*\,.
 \label{eq:involution-identities}
\end{equation}

We next express the joint half-turn operator \(\opH\) in the same block
decomposition.  It is defined in \eqref{eq:joint-halfturn-operator} and is a
self-adjoint Markov contraction by
\cref{prop:joint-halfturn-operator}.  Since \(\opH\) acts only on the
\(x\)-coordinate, while every
function in \(\ran(\opP)\) depends only on \(y\), we have
\(\opH\opP=\opP\).  Self-adjointness also gives
\(\opP\opH=\opP\).  Set
\(\opHperpperp\deq\opP^\perp\opH\opP^\perp:\cH_\perp\to\cH_\perp\).
The identity \(\opP\opH=\opP\) makes \(\cH_\perp\) invariant under
\(\opH\), so \(\opHperpperp\) is its restriction to \(\cH_\perp\).  It is
therefore self-adjoint, and
\(\|\opHperpperp g\|=\|\opH g\|\le\|g\|\) for
\(g\in\cH_\perp\).  Consequently,
\begin{equation}
 \opH=\opP+\opHperpperp\,,
 \qquad
 -I\preceq\opHperpperp\preceq I
 \quad\text{on }\cH_\perp\,.
 \label{eq:halfturn-block-decomposition}
\end{equation}
Relative to \(\cH=\cH_{\mathsf P}\oplus\cH_\perp\), this decomposition
has the block representation
\[
 \opH=
 \begin{pmatrix}
  I_{\cH_{\mathsf P}} & 0\\
  0 & \opHperpperp
 \end{pmatrix}\,.
\]
The Markov operator \(\opK\) induced by the transition kernel \(\mathcal K\)
therefore factors as
\begin{equation}
 \opK
  =\opU\bigl[\rho\opP+(1-\rho)\opH\bigr]
  =\opU\bigl[\opP+(1-\rho)\opHperpperp\bigr]\,.
 \label{eq:ideal-markov-operator}
\end{equation}
Accordingly, its block representation is
\[
 \opK=
 \begin{pmatrix}
  \opUPP & \opUperpP^*\\
  \opUperpP & \opUperpperp
 \end{pmatrix}
 \begin{pmatrix}
  I_{\cH_{\mathsf P}} & 0\\
  0 & (1-\rho)\opHperpperp
 \end{pmatrix}\,.
\]
The diagonal factor in the block representation of \(\opK\) contracts
\(\cH_\perp\) but leaves \(\cH_{\mathsf P}\) unchanged.  For an observable
\(f\in\cH_{\mathsf P}\), contraction therefore relies on the
\(\cH_\perp\)-component \(\opUperpP f\) created by reflection.  The next
subsection estimates this component and the subsequent action of
\(\opHperpperp\).

\subsection{Estimates for the reflection and half-turn operators}
\label{app:reflection-halfturn-estimates}

We begin with the reflection operator \(\opU\).  Let
\(f\in\cH_{\mathsf P}\), so that
\(f(x,y)=f(y)\).  Although \(f\) initially depends only on the auxiliary
coordinate \(y\), reflection gives $(\opU f)(x,y)=f(2x-y)$, which generally depends on both coordinates.  Its macroscopic and microscopic
components are \(\opUPP f\) and \(\opUperpP f\), respectively.  The following
probabilistic representation identifies the squared norm of the microscopic
component created by reflection with an expected conditional variance, thereby
enabling quantitative estimate by the conditional Poincar\'e estimates used in
Appendix~\ref{app:key-estimates}.

Concretely, under \(\pi_\eta\), write
\begin{equation}
 Y_+\deq X+\sqrt\eta\, Z\,,
 \qquad
 Y_-\deq X-\sqrt\eta\, Z\,,
 \qquad
 Z\sim\cN(0,I)\,,\qquad X\sim\mu\,.
 \label{eq:Yplusminus}
\end{equation}
The pair \((Y_+,Y_-)\) is exchangeable.  We henceforth identify
\(\cH_{\mathsf P}\) with \(L^2(\pi_\eta^Y)\) and write \(f(y)\) for a
macroscopic observable.  Under this identification,
\begin{equation}
 (\opUPP f)(y)=\E[f(Y_-)\mid Y_+=y]\,,
 \qquad
 \|\opUperpP f\|^2=\E\Var(f(Y_-)\mid Y_+)\,.
 \label{eq:UPP-exchangeable}
\end{equation}

To quantify the microscopic component \(\opUperpP f\) created when reflection
acts on \(f\in\cH_{\mathsf P}\), define
\begin{equation}
 \opGammaP\deq(I-\opUPP^2)^{1/2}
 \quad\text{on }\cH_{\mathsf P}\,.
 \label{eq:GammaP-definition}
\end{equation}
By \eqref{eq:involution-identities},
\(\opUperpP^*\opUperpP=I-\opUPP^2\), and hence
\begin{equation}
 \opUperpP^*\opUperpP=\opGammaP^2\,,
 \qquad
 \|\opUperpP f\|=\|\opGammaP f\|\,,
 \quad f\in\cH_{\mathsf P}\,.
 \label{eq:reflection-Gamma-identity}
\end{equation}
Thus a lower bound for \(\opGammaP\) quantifies how much of a macroscopic
observable reflection transfers into the microscopic subspace.

\begin{lemma}[Uniform macroscopic coercivity]
\label{lem:macroscopic-coercivity}
Assume \(\beta\eta\le1\), and set
\begin{equation}
 \rho_{\rm mac}\deq\frac{1-\alpha\eta}{1+\alpha\eta}\,,
 \qquad
 \gamma_{\rm gap}\deq\sqrt{1-\rho_{\rm mac}^2}
 =\frac{2\sqrt{\alpha\eta}}{1+\alpha\eta}\,.
 \label{eq:macroscopic-gap-constants}
\end{equation}
Note that \(\gamma_{\rm gap}\geq \frac{1}{2}\sqrt{\alpha\eta}\).
Then the following hold.

(1) For every \(f\in L^2(\pi_\eta^Y)\),
\(\opUPP f\in H^1(\pi_\eta^Y)\) and
\begin{equation}
 4\eta\,\|\nabla\opUPP f\|^2
 \le\|f\|^2-\|\opUPP f\|^2
 =
 \|\opGammaP f\|^2=\|\opUperpP f\|^2\,.
 \label{eq:UPP-sharp-smoothing}
\end{equation}

(2) Consider the mean-zero macroscopic subspace $\cH_{\mathsf P,0}
 \deq\cH_{\mathsf P}\cap L_0^2(\pi_\eta)$.
Then, it holds that
\begin{equation}
    \|\opUPP\| \le \rho_{\rm mac} \quad\text{and}\quad \opUPP \succeq -\frac{1}{2}\,I
 \qquad\text{on }\cH_{\mathsf P,0}\,.
 \label{eq:UPP-two-sided-bound}
\end{equation}
Consequently,
\begin{equation}
 \opGammaP
 =(I-\opUPP^2)^{1/2}
 \succeq
 \gamma_{\rm gap} I
 \qquad\text{on }\cH_{\mathsf P,0}\,.
 \label{eq:macroscopic-coercivity}
\end{equation}

\end{lemma}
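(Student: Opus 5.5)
The plan is to represent \(\opUPP\) as a Markov kernel on \(\R^d\) with an explicit density, differentiate it in \(y\), and reduce both parts to log-concavity facts about \(\pi_\eta^{X\mid Y=y}\).

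\textbf{Part (1).} Under \(\pi_\eta\) with \(Y_+=y\), we have \(Y_-=2X-y\) with \(X\sim\pi_\eta^{X\mid Y=y}\). Hence the conditional density of \(W=Y_-\) given \(Y_+=y\) is proportional to
\[
\exp\bigl\{-V\bigl(\tfrac{w+y}{2}\bigr)-\tfrac{\|w-y\|^2}{8\eta}\bigr\}.
\]
Differentiating \((\opUPP f)(y)=\E[f(W)\mid Y_+=y]\) under the integral sign, the normalizing constant produces a centering term, and I expect
\[
\nabla\opUPP f(y)=\Cov\bigl(f(Y_-),s(X)\bigm| Y_+=y\bigr),
\qquad
s(x)\deq\tfrac12\Bigl[\tfrac{x-y}{\eta}-\nabla V(x)\Bigr].
\]
Justifying the differentiation needs only \(f\in L^2\) and Gaussian tails, so no smoothness of \(f\) is required; this also yields \(\opUPP f\in H^1\).

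Fix a unit vector \(e\). Cauchy--Schwarz gives \(|e^\top\nabla\opUPP f|^2\le\Var(f(Y_-)\mid y)\,\Var(e^\top s(X)\mid y)\). Writing \(\tfrac{x-y}{\eta}-\nabla V=\tfrac{2(x-y)}{\eta}-\nabla V_y\), I will expand the variance using two facts:
\begin{itemize}
\item \(\E[\nabla V_y\nabla V_y^\top]=\E\nabla^2V_y\) and \(\Cov(X,\nabla V_y(X))=I\), both by integration by parts;
\item Brascamp--Lieb, \(\Var(e^\top X)\le\E\, e^\top(\nabla^2V_y)^{-1}e\).
\end{itemize}
Together these give
\[
4\Var(e^\top s)\le\E\, e^\top g(\nabla^2V_y)e,
\qquad g(h)=\frac{(h-2/\eta)^2}{h}.
\]
By \eqref{eq:conditional-curvature} and \(\beta\eta\le1\), the spectrum of \(\nabla^2V_y\) lies in \([1/\eta,2/\eta]\). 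On this interval \(g\le 1/\eta\), so \(\Var(e^\top s)\le 1/(4\eta)\). Taking the sup over \(e\) and integrating in \(y\) gives \(4\eta\|\nabla\opUPP f\|^2\le\E\Var(f(Y_-)\mid Y_+)\). The latter equals \(\|\opUperpP f\|^2\) by \eqref{eq:UPP-exchangeable}, and equals \(\|f\|^2-\|\opUPP f\|^2=\|\opGammaP f\|^2\) by \eqref{eq:involution-identities} and \eqref{eq:reflection-Gamma-identity}.

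\textbf{Part (2).} The target constant \(\rho_{\rm mac}\) is exactly the correlation of \(Y_+\) and \(Y_-\) in the Gaussian case, so the bound must be sharp. Combining (1) with the Poincaré inequality for \(\pi_\eta^Y\) (curvature \(\ge\alpha/(1+\alpha\eta)\) by \eqref{eq:Y-curvature}) only gives \(\|\opUPP\|^2\le(1+\alpha\eta)/(1+5\alpha\eta)\). That is correct to first order but not sharp.

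Instead I will prove a Lipschitz contraction and transfer it to \(L^2\). The map \(y\mapsto 2\prox_{\eta V}(y)-y\) has Jacobian \(2(I+\eta\nabla^2V)^{-1}-I\), whose eigenvalues \((1-\eta h)/(1+\eta h)\) lie in \([0,\rho_{\rm mac}]\). The plan is to establish the analogous statement for the whole conditional law, namely \(\|\nabla\opUPP f\|_\infty\le\rho_{\rm mac}\|\nabla f\|_\infty\). I would do this by
\begin{itemize}
\item differentiating \(\E[f(2X-y)\mid y]\) to get \(\E[(2\partial_yX-I)^\top\nabla f]\) along a synchronous (Langevin or Caffarelli-type) coupling of the laws \(\pi_\eta^{X\mid Y=y}\) in \(y\);
\item observing that \(\partial_yX\) has symmetric derivative in \([(1+\beta\eta)^{-1},(1+\alpha\eta)^{-1}]\) by the curvature bounds.
\end{itemize}
For a reversible Markov operator, a Lipschitz contraction factor \(r\) implies spectral radius at most \(r\) on mean-zero functions: iterate, use \(\Var(\opUPP^nf)\le C r^{2n}\) for Lipschitz \(f\), and apply the spectral theorem with density of Lipschitz functions. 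This yields \(\|\opUPP\|\le\rho_{\rm mac}\).

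For \(\opUPP\succeq-\tfrac12 I\), I will use that the Jacobian above is positive semidefinite. Heuristically, \(\opUPP\) is a positive operator perturbed only by the conditional fluctuation part, which is controlled by part (1). Concretely, write \(\langle f,\opUPP f\rangle=\E[f(Y_+)f(Y_-)]\) and decompose via \(X\): conditionally on \(X\), \((Y_+,Y_-)\) is a reflected Gaussian pair, so \(\E[f(Y_+)f(Y_-)\mid X]=\langle f_x,\mathsf G f_x\rangle\), where \(\mathsf G\) is the reflection on \(\cN(x,\eta I)\). The negative spectral part of \(\mathsf G\) is carried by odd Hermite modes. I would then bound \(\sum_{\text{odd}}\) by \(\tfrac12\|f\|^2\) after averaging over \(X\), using the first-order Hermite part of \(f\) around \(X\) and strong log-concavity. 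Finally, \eqref{eq:macroscopic-coercivity} follows from \(\|\opUPP\|\le\rho_{\rm mac}<1\) and self-adjointness via the functional calculus, since \(1-\lambda^2\ge1-\rho_{\rm mac}^2=\gamma_{\rm gap}^2\) on the spectrum.

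\textbf{Main obstacle.} I expect the sharp norm bound \(\|\opUPP\|\le\rho_{\rm mac}\) to be the crux, specifically making the Lipschitz contraction for the non-Gaussian conditional law rigorous. The lower bound \(-\tfrac12\) is the second delicate point. If the Hermite argument is messy, I would fall back to the crude estimate \(\opUPP\succeq -I\) combined with a direct check that the negative part is controlled through the covariance identity of part (1).
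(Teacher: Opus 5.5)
Part (1) is fine: your integration-by-parts and Brascamp--Lieb bound on $\Var(e^\top s)$ controls the same quantity as the paper's direct conditional Poincar\'e bound. The genuine gap is the lower bound $\opUPP\succeq-\frac12 I$ in part (2).

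Your Hermite decomposition gives exactly
\[
 \langle f,\opUPP f\rangle=\|f\|^2-2\,\E_X\|f_X^{\rm odd}\|^2 .
\]
So the inequality you plan to prove (odd mass at most $\frac12\|f\|^2$) is \emph{equivalent} to $\opUPP\succeq0$. Positivity of the Jacobian of $2\prox_{\eta V}-I$ does not transfer to this operator statement, and $\opUPP\succeq0$ fails in general. The form has kernel $e^{-\|a\|^2/(4\eta)}e^{-\|b\|^2/(4\eta)}\exp\{-V(\frac{a+b}2)+\frac{\|a+b\|^2}{8\eta}\}$. By the characterization of exponentially convex functions (Bernstein--Widder), its positivity forces $\mu$ to be an $\cN(0,\eta I)$-convolution of a nonnegative measure. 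That is impossible, e.g., when $\beta\eta=1$ and $\nabla^2V=\eta^{-1}I$ at one point but not everywhere. Indeed, for such a mixture $\nabla^2V=\eta^{-1}I-\eta^{-2}\Cov(\text{posterior})$, so equality at one point forces the mixing measure to be a Dirac mass.

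Your fallback, as described, is also not enough. The inequality of part (1), like the Poincar\'e inequality, is insensitive to the sign of the spectrum, so it cannot by itself exclude eigenvalues near $-1$.

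The missing idea is to apply your odd-mode computation to the \emph{smoothed} function $g=\opUPP f$ rather than to $f$. Conditionally on $X$, the Gaussian Poincar\'e inequality gives $\langle g,(I-\opUPP)g\rangle=2\,\E_X\|g_X^{\rm odd}\|^2\le2\eta\|\nabla g\|^2$. Part (1) gives $4\eta\|\nabla\opUPP f\|^2\le\langle f,(I-\opUPP^2)f\rangle$. Together,
\[
 2\opUPP^2(I-\opUPP)\preceq I-\opUPP^2,
 \qquad\text{equivalently}\qquad
 (I-\opUPP)^2(I+2\opUPP)\succeq0,
\]
which confines the spectrum to $[-\frac12,1]$.

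For the norm bound, your Lipschitz route can be made rigorous. A synchronous Langevin coupling of $\pi_\eta^{X\mid Y=y}$ and $\pi_\eta^{X\mid Y=y'}$ gives $\frac{\dd}{\dd t}\|D\|\le-(\alpha+\eta^{-1})\|D\|+(\eta^{-1}-\alpha)\|y-y'\|$ for $D=(2X-y)-(2X'-y')$. The Jacobian of such a coupling is not symmetric, so your spectral localization of $\partial_yX$ is not right as stated, but only $\|2\partial_yX-I\|\le\rho_{\rm mac}$ is needed. The transfer to $L^2$ for the reversible operator $\opUPP$ is standard.

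That route is, however, unnecessary. The value $(1+\alpha\eta)/(1+5\alpha\eta)$ comes only from rounding in your own part (1). The function $h\mapsto(h-2/\eta)^2/h$ is decreasing on $(0,2/\eta]$ and $h\ge\alpha+\eta^{-1}$, so you actually get $\Var(e^\top s)\le(\eta^{-1}-\alpha)^2/\{4(\eta^{-1}+\alpha)\}$, i.e.
\[
 \eta\,\|\nabla\opUPP f\|^2\le\frac{(1-\alpha\eta)^2}{4(1+\alpha\eta)}\bigl(\|f\|^2-\|\opUPP f\|^2\bigr).
\]
Combining this with the Poincar\'e inequality for $\pi_\eta^Y$ (constant $\alpha^{-1}+\eta$) rearranges to exactly $\|\opUPP f\|\le\rho_{\rm mac}\|f\|$ on centered $f$. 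This is how the paper proves it.
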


We prove \cref{lem:macroscopic-coercivity} in
Appendix~\ref{app:proof-coercivity}. The identity
\eqref{eq:UPP-exchangeable} expresses \(\|\opUperpP f\|\) in terms of the
conditional variance of \(f(Y_-)\) given \(Y_+\).  Differentiating the
corresponding conditional expectation shows that this variance controls how
quickly \(\opUPP f\) changes with \(y\).  The Poincar\'e inequality for
\(\pi_\eta^Y\) then bounds the norm of a centered \(f\) in terms of
\(\|\opUperpP f\|\), giving the stated estimate.

Since \(\opGammaP\) is boundedly invertible on \(\cH_{\mathsf P,0}\),
\eqref{eq:reflection-Gamma-identity} gives the polar decomposition
\begin{equation}
 \opUperpP=\opVperpP\opGammaP\,,
 \qquad
 \opVperpP\deq\opUperpP\opGammaP^{-1}\,:
 \cH_{\mathsf P,0}\longrightarrow\cH_\perp\,,
 \qquad
 \opVperpP^*\opVperpP=I_{\cH_{\mathsf P,0}}\,.
 \label{eq:reflection-polar-decomposition}
\end{equation}
Here \(\opGammaP\) measures the size of the microscopic component created by
reflection, while the isometry \(\opVperpP\) records its direction in
\(\cH_\perp\).
The next proposition says that the half-turn acts nearly as \(-I\) on the
corresponding microscopic directions \(\opVperpP\opGammaP^{-1}f\).

\begin{proposition}[Global half-turn bound]
\label{prop:global-halfturn-cutoff}
There exist universal constants \(c,C>0\) such that, if
\(\beta\eta\le c\), then
\begin{equation}
 \bigl\|(I+\opHperpperp)\opVperpP v\bigr\|^2
 \le C\log(1/\gamma_{\rm gap})\,\|\opGammaP v\|^2\,,
 \qquad v\in\cH_{\mathsf P,0}\,.
 \label{eq:global-halfturn-cutoff}
\end{equation}
\end{proposition}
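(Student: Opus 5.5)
The plan is to prove \eqref{eq:global-halfturn-cutoff} by a spectral decomposition of \(\opGammaP\) on \(\cH_{\mathsf P,0}\) into dyadic bands, a band-wise estimate with no logarithm, and a final summation that produces the \(\log(1/\gamma_{\rm gap})\) factor. By \cref{lem:macroscopic-coercivity}, the spectrum of \(\opGammaP\) on \(\cH_{\mathsf P,0}\) lies in \([\gamma_{\rm gap},1]\). I will split it into \(J\le C\log(1/\gamma_{\rm gap})\) dyadic intervals \(I_j=(2^{-j-1},2^{-j}]\) and write \(v=\sum_j v_j\) with \(v_j\deq\1_{I_j}(\opGammaP)v\). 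These pieces are orthogonal, and \(\|\opGammaP v\|^2=\sum_j\|\opGammaP v_j\|^2\).

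Since the images \((I+\opHperpperp)\opVperpP v_j\) need not be orthogonal, I will simply use Cauchy--Schwarz:
\[
\Bigl\|\sum_j a_j\Bigr\|^2\le J\sum_j\|a_j\|^2 .
\]
So it suffices to prove the band-wise bound
\[
\|(I+\opHperpperp)\opVperpP v_j\|\le C\|\opGammaP v_j\| .
\]
Writing \(g_j\deq\opGammaP^{-1}v_j\), we have \(\opVperpP v_j=\opUperpP g_j\), and \(\|\opGammaP v_j\|\asymp 2^{-j}\|\opGammaP g_j\|\asymp\|\opGammaP^2 g_j\|\). The band-wise target is therefore
\[
\|(I+\opHperpperp)\opUperpP g\|\le C\|\opGammaP^2 g\|
\]
for \(g\) spectrally localized. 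Equivalently, the half-turn acts as \(-I\) on the created microscopic component up to an error one order of \(\opGammaP\) smaller.

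To prove the band-wise bound, I will use the explicit form \((\opUperpP g)(x,y)=g(2x-y)-\E[g(2X-Y)\mid Y=y]\). Because \(g\) is spectrally localized, I can write \(g=\opUPP h\) up to bounded factors; here I use that \(\opUPP\) is invertible on low bands, while high bands with \(\opGammaP\gtrsim1\) are trivial since \(\|I+\opHperpperp\|\le2\). Part (1) of \cref{lem:macroscopic-coercivity} then gives \(\eta\|\nabla g\|^2\lesssim\|\opGammaP g\|^2\), and iterating once more gives a second-derivative control \(\eta^2\|\nabla^2 g\|^2\lesssim\|\opGammaP^2 g\|^2\) in a weak sense.

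Next I Taylor-expand in \(x\) around \(m(y)\deq\E[X\mid Y=y]\):
\[
\opUperpP g=2\nabla g(2m(y)-y)^\top(x-m(y))+r(x,y),
\]
where the remainder \(r\) is second order and bounded by \(\eta\|\nabla^2 g\|\) through the conditional covariance bound \eqref{eq:conditional-covariance-prelim}. For the linear part, I use the affine half-turn analysis. Without bounces, the half-turn sends \(x-c\) to \(-(x-c)\), and \(c_{y,\widetilde x}-m(y)=O(\eta\beta\|\widetilde x-m\|)\). The bounce contribution has rate of order \(\beta\eta\sqrt d\), but its effect on linear observables can be estimated in \(L^2\) through the residual \(h_{\widetilde x}\), with \(\|h_{\widetilde x}(x)\|\le\beta\|x-\widetilde x\|\). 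I therefore expect
\[
\|(I+\opH)\,[a(y)^\top(x-m(y))]\|\lesssim\beta\eta\,\|a\|\,\sqrt{\eta},
\]
which is of the size \(\sqrt\eta\,\|\nabla g\|\cdot\beta\eta\). Then I need to compare \(\beta\eta\|\opGammaP g\|\) with \(\|\opGammaP^2 g\|\) on band \(j\). This comparison fails for bands with \(2^{-j}\ll\beta\eta\). For those bands I will instead use the stronger structure that the affine error itself carries a factor of \(\opGammaP\), by exploiting that the \(\widetilde X\)-average of \(c_{y,\widetilde X}\) equals \(y-\eta\E[\nabla V(X)\mid y]=m(y)\) exactly, which holds by integration by parts under \(\pi_\eta^{X\mid Y=y}\). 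With this identity, only fluctuation terms survive, and they are again controlled by Poincaré.

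The step I expect to be the main obstacle is this affine half-turn estimate with bounces. Concretely, I must show that the full PDMP with reflections, and not just the harmonic flow, negates linear observables \(x-m(y)\) up to an error controlled by the conditional variance. The difficulty is that the bounce rate is not small pathwise, only in expectation. My first attempt will be a Duhamel expansion of \(\mathsf T_\pi^{y,\widetilde x}\) around the harmonic flow. The first-order term in the jump generator \(\mathcal J\) acting on the linear function \(\sqrt\eta\,p\)-component would be bounded using stationarity together with \cref{lem:event-tail}; the higher-order terms would be bounded using that \(\mathsf T_t\) is a contraction on \(L^2(\nu_{\eta,y})\) by \cref{prop:fiber-semigroup-rigorous}.
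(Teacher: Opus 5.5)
Your outer reduction is correct, and it is a more elementary substitute for the paper's. Dyadic bands of $\opGammaP$, with Cauchy--Schwarz over $J\lesssim\log(1/\gamma_{\rm gap})$ bands, replace \cref{lem:operator-comparison}. Your band-wise target $\|(I+\opHperpperp)\opUperpP g\|\lesssim\|\opGammaP^2g\|$ is exactly what the paper's interpolation bound, \cref{prop:global-positive-edge-interpolation}, gives on a single band. The gap is the band-wise bound itself in the bands that matter, $\gamma\lesssim\beta\eta$ (with $\gamma$ the band level). Your proposed fix does not supply the missing power of $\opGammaP$. Averaging over $\widetilde X$ with $\E c_{y,\widetilde X}=m(y)$ already makes the no-bounce half-turn negate affine functions exactly. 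So the whole affine error is governed by the fluctuation $\Var_{q_y}(a^\top g_y(W))$, where $g_y(W)=\sqrt\eta\,\nabla V(X)$; this is also what the paper's Duhamel computation finds. Conditional Poincar\'e bounds this only by $\beta^2\eta^2\|a\|^2$, which is the same estimate you already noted is too weak. No fixed-$y$ argument can do better: for quadratic $V$ with curvature $\beta$ along $e_1$, this variance is of order $\beta^2\eta^2a_1^2$. The needed gain is global: a function localized at level $\gamma$ can have only a small gradient along directions in which $U_\eta$ is strongly curved.

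Your second-derivative bound $\eta^2\|\nabla^2g\|^2\lesssim\|\opGammaP^2g\|^2$ has the same problem. Iterating part (1) of \cref{lem:macroscopic-coercivity} would require $\nabla g$ to lie in the same band, i.e.\ $\nabla$ to commute with $\opUPP$, which holds only for Gaussian targets. By \cref{lem:UPP-derivative}, the commutator is $\Cov(f(Y_-),\nabla V(X)\mid Y_+)$. Its crude bound $\beta\sqrt\eta\,\|\opGammaP f\|$ gives $\beta\eta\gamma$ where $\gamma^2$ is needed.

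The paper closes both holes with two ingredients your plan lacks. First, \cref{lem:conditional-gradient-covariance} sharpens Poincar\'e to $\Cov_{q_y}(g_y)\preceq2\beta\eta\,(I-\Sigma_y)$. The affine error is then at most $C\beta\eta\,a^\top(I-\Sigma_y)a$, with $I-\Sigma_y=\eta\nabla^2U_\eta(y)$. Pointwise this gains nothing, but it expresses the error through the curvature of $U_\eta$.

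Second, and decisively, \cref{lem:conditional-affine-approximation-energy}. Center the affine part at $a_y=\E[\nabla w(Y_-)\mid Y_+=y]$ rather than at a pointwise Taylor coefficient. Then one Poincar\'e inequality bounds the remainder by $4\eta\mathcal R$ with $\mathcal R=\|\opGammaP\nabla w\|^2$, needing no $\nabla^2w$ and no regularity of $\nabla^2V$. Inserting the decomposition into the derivative formula gives $\nabla(I-\opUPP)w=\nabla w-a+2(I-\Sigma)a+\eta^{-1/2}\varepsilon$. Pairing this with $\nabla w+a$ yields $\mathcal R+\int a_y^\top(I-\Sigma_y)a_y\,\pi_\eta^Y(\dd y)\lesssim\|\nabla(I-\opUPP)w\|\,\|\nabla w\|$. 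Part (1) of \cref{lem:macroscopic-coercivity}, applied to $\opUPP^{-1}w$ and $\opUPP^{-1}(I-\opUPP)w$, bounds the right side by $\eta^{-1}\|\opGammaP w\|\,\|\opGammaP^3w\|\approx\eta^{-1}\gamma^4\|w\|^2$ for $w$ (your $g$) in a band. The second power of $\opGammaP$ is carried by $\nabla(I-\opUPP)w$, not by $\nabla^2w$. With this in hand, your dyadic summation would finish the proof.
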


We prove \cref{prop:global-halfturn-cutoff} in
\cref{sec:affine-halfturn,app:global-halfturn-cutoff}.

Together, \cref{lem:macroscopic-coercivity,prop:global-halfturn-cutoff}
provide the estimates used in the modified \(L^2\) argument below.

\subsection{Modified \texorpdfstring{\(L^2\)}{L2} contraction and proof of the main theorem}
\label{sec:block}

We now prove \cref{thm:ideal-convergence} using a discrete modified  \(L^2\)
argument.  For \(f\in L_0^2(\pi_\eta)\), write
\[
 f_{\mathsf P}\deq\opP f\,,
 \qquad f_\perp\deq\opP^\perp f\,.
\]
Here \(f_{\mathsf P}\) and \(f_\perp\) are, respectively, the macroscopic
and microscopic parts of \(f\).

Fix \(0<\rho\le1/2\).  The ordinary \(L^2\) energy does not by itself give
the required one-step contraction.  Indeed, since \(\opU\) is unitary and
\(\opHperpperp\) is a contraction,
\begin{align}
 \|\opK f\|^2-\|f\|^2
 &=(1-\rho)^2\,\|\opHperpperp f_\perp\|^2-\|f_\perp\|^2\notag\\
 &\le-\rho\,(2-\rho)\,\|f_\perp\|^2
 \le-\rho\,\|f_\perp\|^2\,.
 \label{eq:unmodified-L2-dissipation}
\end{align}
This estimate controls \(f_\perp\) but gives nothing when
\(f=f_{\mathsf P}\).
Reflection moves part of \(f_{\mathsf P}\) into the
microscopic subspace, but this transfer is not captured by the \(L^2\) norm,
which is a standard difficulty in hypocoercivity.  The modified \(L^2\)
method addresses this difficulty by adding a corrector to the Lyapunov
function.  The corrector records the transfer from \(f_{\mathsf P}\) to
\(f_\perp\), allowing the contraction in
\eqref{eq:unmodified-L2-dissipation} to also control \(f_{\mathsf P}\).
This idea goes back to Dolbeault, Mouhot, and Schmeiser~\cite{dms}; here we
follow the refinement of Fan, Li, and
Lu~\cite{fan-li-lu}.

To construct the corrector, we first isolate the part of \(f_\perp\) that
couples to \(f_{\mathsf P}\) under reflection.  Define
\[
 f_{\mathsf V}\deq\opVperpP^*f_\perp\,.
\]
By \eqref{eq:reflection-polar-decomposition},
\(\opUperpP^*f_\perp=\opGammaP f_{\mathsf V}\), while the component of
\(f_\perp\) orthogonal to \(\ran(\opVperpP)\) is annihilated by
\(\opUperpP^*\).  Since \(\opVperpP\) is an isometry,
\(\opVperpP f_{\mathsf V}=\opVperpP\opVperpP^*f_\perp\) is the orthogonal
projection of \(f_\perp\) onto \(\ran(\opVperpP)\), and hence
\(\|f_{\mathsf V}\|\le\|f_\perp\|\).  We work with \(f_{\mathsf V}\), rather
than with this projection itself, because \(f_{\mathsf V}\) and
\(f_{\mathsf P}\) both belong to \(\cH_{\mathsf P,0}\) and can therefore be
paired in the corrector.

With this notation, let \(\omega>0\) be a weight to be chosen below and
define the modified Lyapunov function
\begin{equation}
 \mathscr L_\omega(f)
 \deq\|f\|^2
   +\omega\mathscr C(f_{\mathsf P},f_{\mathsf V})\,,
 \label{eq:modified-L2-general}
\end{equation}
where the corrector is
\begin{equation}
 \mathscr C(f_{\mathsf P},f_{\mathsf V})
 \deq\frac12\,\bigl(\|f_{\mathsf P}\|^2-\|f_{\mathsf V}\|^2\bigr)
   -\bigl\langle\opUPP\opGammaP^{-1}f_{\mathsf P},f_{\mathsf V}\bigr\rangle\,.
 \label{eq:discrete-corrector}
\end{equation}
Here \(\opGammaP^{-1}\) is applied only on \(\cH_{\mathsf P,0}\), where
\(\opGammaP\succeq\gamma_{\rm gap}I\), so the corrector is well-defined.

To motivate this choice, recall from \eqref{eq:ideal-markov-operator} that
\(\opK\) acts in two stages: it first maps
\[
 f_{\mathsf P}+f_\perp
 \longmapsto
 f_{\mathsf P}+(1-\rho)\opHperpperp f_\perp
\]
and then applies \(\opU\).  For the moment, set \(\rho=0\) and replace
\(\opHperpperp\) by \(-I_{\cH_\perp}\).  The error
\(I+\opHperpperp\) introduced by this replacement is controlled by
\cref{prop:global-halfturn-cutoff}.  Denote the resulting idealized operator
by
\[
 \opK_{\rm id}\deq\opU(\opP-\opP^\perp)\,.
\]
Set \(g\deq\opK_{\rm id}f\), and define the corresponding components by
\[
 g_{\mathsf P}\deq\opP g\,,
 \qquad
 g_{\mathsf V}\deq\opVperpP^*\opP^\perp g\,.
\]
In terms of these components, \(\opK_{\rm id}\) is represented by the
operator-valued rotation matrix
\[
 \begin{pmatrix}g_{\mathsf P}\\ g_{\mathsf V}\end{pmatrix}
 =
 \begin{pmatrix}
  \opUPP&-\opGammaP\\
  \opGammaP&\opUPP
 \end{pmatrix}
 \begin{pmatrix}f_{\mathsf P}\\ f_{\mathsf V}\end{pmatrix}\,.
\]
Indeed, the first component follows immediately from the block representation
of \(\opU\) and the polar decomposition
\eqref{eq:reflection-polar-decomposition}:
\begin{align*}
 g_{\mathsf P}
 &=\opP\opU(f_{\mathsf P}-f_\perp)
 =\opUPP f_{\mathsf P}-\opUperpP^*f_\perp
 =\opUPP f_{\mathsf P}-\opGammaP\opVperpP^*f_\perp
 =\opUPP f_{\mathsf P}-\opGammaP f_{\mathsf V}\,.
\end{align*}
For the second component, substitute
\(\opUperpP^*=\opGammaP\opVperpP^*\) into
\(\opUperpP^*\opUperpperp=-\opUPP\opUperpP^*\) from
\eqref{eq:involution-identities}.  Since \(\opUPP\) commutes with
\(\opGammaP\), this gives
\[
 \opGammaP\opVperpP^*\opUperpperp
 =-\opGammaP\opUPP\opVperpP^*\,.
\]
The ranges of \(\opVperpP^*\opUperpperp\) and
\(\opUPP\opVperpP^*\) lie in \(\cH_{\mathsf P,0}\), where
\(\opGammaP^{-1}\) is bounded.  Applying \(\opGammaP^{-1}\) to the
displayed identity therefore gives
\(\opVperpP^*\opUperpperp=-\opUPP\opVperpP^*\).  Hence
\begin{align*}
 g_{\mathsf V}
 &=\opVperpP^*\opP^\perp\opU(f_{\mathsf P}-f_\perp)
 =\opVperpP^*(\opUperpP f_{\mathsf P}
                  -\opUperpperp f_\perp)
 =\opGammaP f_{\mathsf P}+\opUPP\opVperpP^*f_\perp
 =\opGammaP f_{\mathsf P}+\opUPP f_{\mathsf V}\,.
\end{align*}
This proves the claimed matrix formula.  Since
\(\opUPP^2+\opGammaP^2=I\), the block matrix acts as a rotation on
\(\cH_{\mathsf P,0}\oplus\cH_{\mathsf P,0}\).  Under this rotation,
direct substitution into \eqref{eq:discrete-corrector} gives
\begin{align}
 &\mathscr C(g_{\mathsf P},g_{\mathsf V})
   -\mathscr C(f_{\mathsf P},f_{\mathsf V})\notag\\
 &\quad=\mathscr C(\opUPP f_{\mathsf P}-\opGammaP f_{\mathsf V}\,,
                    \opGammaP f_{\mathsf P}+\opUPP f_{\mathsf V})
          -\mathscr C(f_{\mathsf P},f_{\mathsf V})\notag\\
 &\quad=-\|f_{\mathsf P}\|^2+\|f_{\mathsf V}\|^2\,.
 \label{eq:corrector-cohomological-identity}
\end{align}
For \(0<\omega<\rho\), combining the leading terms from
\eqref{eq:unmodified-L2-dissipation} and
\eqref{eq:corrector-cohomological-identity}, and using
\(\|f_{\mathsf V}\|\le\|f_\perp\|\), gives
\begin{align*}
 &-\rho\,\|f_\perp\|^2
   +\omega\,\bigl(-\|f_{\mathsf P}\|^2+\|f_{\mathsf V}\|^2\bigr)
 \le-\omega\,\|f_{\mathsf P}\|^2
             -(\rho-\omega)\,\|f_\perp\|^2\,.
\end{align*}
Thus the leading terms control both components.  This calculation makes
explicit the role of the corrector in the modified \(L^2\) hypocoercivity
argument; see also Fan, Li, and Lu~\cite{fan-li-lu}.

We now turn from the idealized calculation to the contraction proof.  The
modified \(L^2\) argument has two steps: we first show that the Lyapunov
function is equivalent to the squared \(L^2\) norm and then establish its
one-step decay.

\begin{lemma}[Equivalence with the squared \(L^2\) norm]
\label{lem:modified-L2-equivalence}
If \(0<\omega\le\gamma_{\rm gap}\), then every
\(f\in L_0^2(\pi_\eta)\) satisfies
\begin{equation}
 \frac12\,\|f\|^2
 \le\mathscr L_\omega(f)
 \le\frac32\,\|f\|^2\,.
 \label{eq:modified-L2-equivalence}
\end{equation}
\end{lemma}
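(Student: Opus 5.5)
We need to bound $|\omega\,\mathscr C(f_{\mathsf P},f_{\mathsf V})|\le\frac12\|f\|^2$. Since $\|f\|^2=\|f_{\mathsf P}\|^2+\|f_\perp\|^2$ and $\|f_{\mathsf V}\|\le\|f_\perp\|$, it suffices to show
\[
 |\mathscr C(f_{\mathsf P},f_{\mathsf V})|\le\frac{1}{2\omega}\bigl(\|f_{\mathsf P}\|^2+\|f_{\mathsf V}\|^2\bigr)\,,
\]
with an extra factor to spare. For $f\in L_0^2(\pi_\eta)$, the macroscopic part $f_{\mathsf P}$ is centered, so $f_{\mathsf P}\in\cH_{\mathsf P,0}$. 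Likewise $f_{\mathsf V}\in\cH_{\mathsf P,0}$ by construction. Hence all operators in the corrector act on $\cH_{\mathsf P,0}$, where \cref{lem:macroscopic-coercivity} applies.

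The plan is to bound the two pieces of \eqref{eq:discrete-corrector} separately.

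\emph{Quadratic piece.} Trivially,
\[
 \tfrac12\bigl|\|f_{\mathsf P}\|^2-\|f_{\mathsf V}\|^2\bigr|\le\tfrac12\max\{\|f_{\mathsf P}\|^2,\|f_{\mathsf V}\|^2\}\,.
\]

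\emph{Cross term.} First I would bound the operator norm of $\opUPP\opGammaP^{-1}$ on $\cH_{\mathsf P,0}$. Since $\opUPP$ is self-adjoint and $\opGammaP=(I-\opUPP^2)^{1/2}$ is a function of it, the two commute. By spectral calculus, $\|\opUPP\opGammaP^{-1}\|\le\sup_{|s|\le\rho_{\rm mac}}|s|/\sqrt{1-s^2}$. By \eqref{eq:UPP-two-sided-bound} this equals $\rho_{\rm mac}/\gamma_{\rm gap}\le1/\gamma_{\rm gap}$. Cauchy--Schwarz and AM--GM then give
\[
 |\langle\opUPP\opGammaP^{-1}f_{\mathsf P},f_{\mathsf V}\rangle|\le\gamma_{\rm gap}^{-1}\|f_{\mathsf P}\|\,\|f_{\mathsf V}\|\le\tfrac{1}{2\gamma_{\rm gap}}\bigl(\|f_{\mathsf P}\|^2+\|f_{\mathsf V}\|^2\bigr)\,.
\]

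\emph{Combining.} Putting the two pieces together,
\[
 \omega|\mathscr C|\le\Bigl(\frac\omega2+\frac{\omega}{2\gamma_{\rm gap}}\Bigr)\bigl(\|f_{\mathsf P}\|^2+\|f_\perp\|^2\bigr)\,.
\]
With $\omega\le\gamma_{\rm gap}$, the prefactor is at most $\frac{\gamma_{\rm gap}}2+\frac12$. This is slightly too large, so the constants must be sharpened.

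\emph{Sharpening (the main obstacle).} I expect the only real difficulty to be this constant bookkeeping, which I would handle in one of two ways.
\begin{itemize}
\item Treat $\mathscr C$ as a quadratic form in $(f_{\mathsf P},f_{\mathsf V})$ given by the $2\times2$ operator matrix
\[
 \begin{pmatrix}\tfrac12 & -\tfrac12 A\\ -\tfrac12 A & -\tfrac12\end{pmatrix},\qquad A=\opUPP\opGammaP^{-1}\,.
\]
Since $A$ is self-adjoint, spectral calculus reduces the question to scalars: the eigenvalues are $\pm\frac12\sqrt{1+a^2}$ with $|a|\le\rho_{\rm mac}/\gamma_{\rm gap}$. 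Thus $\sqrt{1+a^2}\le1/\gamma_{\rm gap}$, using $\rho_{\rm mac}^2+\gamma_{\rm gap}^2=1$, and so
\[
 |\mathscr C|\le\frac{1}{2\gamma_{\rm gap}}\bigl(\|f_{\mathsf P}\|^2+\|f_{\mathsf V}\|^2\bigr)\,.
\]
Multiplying by $\omega\le\gamma_{\rm gap}$ gives $\omega|\mathscr C|\le\frac12\|f\|^2$ exactly.
\item If the scalar reduction is awkward, the fallback is to use the sharper eigenvalue bound from the $2\times2$ computation directly.
\end{itemize}
The first route yields \eqref{eq:modified-L2-equivalence} with exactly the stated constants, so I would adopt it.
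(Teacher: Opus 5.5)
Your final route is correct and is essentially the paper's proof: you write \(\mathscr C\) as the quadratic form of the self-adjoint block operator with off-diagonal entries \(-\tfrac12\opUPP\opGammaP^{-1}\), and bound its norm by \(\tfrac1{2\gamma_{\rm gap}}\). The paper computes that norm by squaring the block operator to obtain the diagonal operator with entries \(\opGammaP^{-2}\), which is the same as your scalar eigenvalues \(\pm\tfrac12\sqrt{1+a^2}\) with \(1+a^2=(1-s^2)^{-1}\). The initial termwise attempt, which loses the constant, can simply be dropped.
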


\begin{proof}
The operators \(\opUPP\) and \(\opGammaP\) commute, so
\(\opUPP\opGammaP^{-1}\) is self-adjoint on
\(\cH_{\mathsf P,0}\).  Moreover,
\begin{equation*}
 I+(\opUPP\opGammaP^{-1})^2
 =I+\opUPP^2\opGammaP^{-2}
 =\opGammaP^{-2}\,,
\end{equation*}
where the last identity uses \(\opUPP^2+\opGammaP^2=I\).
By \eqref{eq:discrete-corrector},
\begin{align*}
 2\mathscr C(f_{\mathsf P},f_{\mathsf V})
 &=\Bigl\langle
   \begin{pmatrix}f_{\mathsf P}\\f_{\mathsf V}\end{pmatrix}\,,
   \begin{pmatrix}
    I&-\opUPP\opGammaP^{-1}\\
    -\opUPP\opGammaP^{-1}&-I
   \end{pmatrix}
   \begin{pmatrix}f_{\mathsf P}\\f_{\mathsf V}\end{pmatrix}
  \Bigr\rangle\,.
\end{align*}
The square of the block operator in this display is
\[
 \begin{pmatrix}
  I&-\opUPP\opGammaP^{-1}\\
  -\opUPP\opGammaP^{-1}&-I
 \end{pmatrix}^{\!2}
 =\begin{pmatrix}\opGammaP^{-2}&0\\0&\opGammaP^{-2}\end{pmatrix}\,.
\]
Since the block operator is self-adjoint and
\(\opGammaP\succeq\gamma_{\rm gap}I\),
\[
 \Bigl\|
  \begin{pmatrix}
   I&-\opUPP\opGammaP^{-1}\\
   -\opUPP\opGammaP^{-1}&-I
  \end{pmatrix}
 \Bigr\|
 =\|\opGammaP^{-1}\|
 \le\frac1{\gamma_{\rm gap}}\,.
\]
Applying this operator norm bound to the preceding quadratic form gives
\begin{equation}
 |\mathscr C(f_{\mathsf P},f_{\mathsf V})|
 \le\frac1{2\gamma_{\rm gap}}\,
 \bigl(\|f_{\mathsf P}\|^2+\|f_{\mathsf V}\|^2\bigr)
 \le\frac1{2\gamma_{\rm gap}}\,\|f\|^2\,.
 \label{eq:corrector-norm}
\end{equation}
Here the second inequality uses $\|f_{\mathsf P}\|^2+\|f_{\mathsf V}\|^2
 \le\|f_{\mathsf P}\|^2+\|f_\perp\|^2
 =\|f\|^2$.
Finally, \eqref{eq:modified-L2-general},
\eqref{eq:corrector-norm}, and \(\omega\le\gamma_{\rm gap}\) imply
\begin{align}
 \bigl|\mathscr L_\omega(f)-\|f\|^2\bigr|
 &=\omega\,|\mathscr C(f_{\mathsf P},f_{\mathsf V})|
 \le\frac{\omega}{2\gamma_{\rm gap}}\,\|f\|^2
 \le\frac12\,\|f\|^2\,. \qedhere
\end{align}
\end{proof}

\begin{lemma}[One-step decay of the Lyapunov function]
\label{lem:modified-L2-one-step}
There exist universal constants \(c_{\rm hyp},c_0,c>0\) such that the
following holds.  Set
\begin{equation}
 \Lambda_\rho\deq\log(1/\gamma_{\rm gap})
  +\Bigl(\frac{\rho}{\gamma_{\rm gap}}\Bigr)^2\,,
 \qquad
 \omega\deq\frac{c_{\rm hyp}\rho}{\Lambda_\rho}\,.
 \label{eq:refreshment-scale}
\end{equation}
If \(\beta\eta\le c_0\), then every
\(f\in L_0^2(\pi_\eta)\) satisfies
\begin{equation}
 \mathscr L_\omega(\opK f)
 \le\Bigl(1-\frac{c\rho}{\Lambda_\rho}\Bigr)\,
      \mathscr L_\omega(f)\,.
 \label{eq:modified-L2-one-step}
\end{equation}
\end{lemma}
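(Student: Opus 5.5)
We compare $\opK f$ with the idealized computation above and treat the differences as perturbations. Write $\opK=\opU[\opP-\opP^\perp]+\opU\,\opP^\perp\mathsf E\opP^\perp$ with
\[
 \mathsf E\deq(1-\rho)\opHperpperp+I_{\cH_\perp}
 =(1-\rho)(I+\opHperpperp)+\rho I_{\cH_\perp}\,.
\]
Set $g\deq\opK f$.

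\textbf{Step 1: components of $g$.} By the block representation and the polar decomposition \eqref{eq:reflection-polar-decomposition},
\[
 g_{\mathsf P}=\opUPP f_{\mathsf P}-\opGammaP f_{\mathsf V}+\opGammaP e\,,
 \qquad
 g_{\mathsf V}=\opGammaP f_{\mathsf P}+\opUPP f_{\mathsf V}-\opUPP e\,,
\]
where $e\deq\opVperpP^*\mathsf E f_\perp$. To get these, apply the identities $\opUperpP^*=\opGammaP\opVperpP^*$ and $\opVperpP^*\opUperpperp=-\opUPP\opVperpP^*$ to the perturbation $\opU\mathsf E f_\perp$.

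\textbf{Step 2: exact change of the corrector.} Expanding $\mathscr C(g_{\mathsf P},g_{\mathsf V})$ with the rotation structure gives
\[
 \mathscr C(g)-\mathscr C(f)=-\|f_{\mathsf P}\|^2+\|f_{\mathsf V}\|^2+B(f_{\mathsf P},f_{\mathsf V},e)+Q(e)\,,
\]
where $B$ is bilinear and $Q$ is quadratic, both with coefficients built from $\opUPP,\opGammaP,\opGammaP^{-1}$. The key point is that $e$ only ever appears multiplied by $\opGammaP$ or $\opUPP$, except through $\opUPP\opGammaP^{-1}$ acting on $g_{\mathsf P}$. That term contributes $\langle\opUPP\opGammaP^{-1}\opGammaP e,\cdot\rangle=\langle\opUPP e,\cdot\rangle$, so no $\gamma_{\rm gap}^{-1}$ factor survives. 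Hence
\[
 |B|\lesssim\|e\|\,(\|f_{\mathsf P}\|+\|f_{\mathsf V}\|)\,,\qquad |Q|\lesssim\|e\|^2\,.
\]

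\textbf{Step 3: bounding $\|e\|$, the main obstacle.} We have $e=(1-\rho)\opVperpP^*(I+\opHperpperp)f_\perp+\rho f_{\mathsf V}$. Since $\opHperpperp$ is self-adjoint,
\[
 \langle\opVperpP^*(I+\opHperpperp)f_\perp,v\rangle=\langle f_\perp,(I+\opHperpperp)\opVperpP v\rangle\,.
\]
Testing against $v$ and using \cref{prop:global-halfturn-cutoff} gives
\[
 |\langle\opVperpP^*(I+\opHperpperp)f_\perp,v\rangle|\le C\sqrt{\log(1/\gamma_{\rm gap})}\,\|f_\perp\|\,\|\opGammaP v\|\,.
\]
This controls the pairing with $f_{\mathsf P}$ in $B$ after rearranging to move $\opGammaP$ onto the test vector, which is allowed because $\opGammaP$ is self-adjoint. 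It also bounds $\|e\|\lesssim(\sqrt{\log(1/\gamma_{\rm gap})}+\rho)\|f_\perp\|$. The $\rho f_{\mathsf V}$ part paired against $\opGammaP^{-1}$-weighted terms is where $(\rho/\gamma_{\rm gap})^2$ enters $\Lambda_\rho$. The delicate part is this bookkeeping: every cross term has to be of the form $\|f_\perp\|\cdot\|f_{\mathsf P}\|$ times at most $\sqrt{\Lambda_\rho}$.

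\textbf{Step 4: assemble.} Combine Step 2 with \eqref{eq:unmodified-L2-dissipation}. Young's inequality on the cross terms gives
\[
 \omega|B|\le\tfrac{\omega}{2}\|f_{\mathsf P}\|^2+C\omega\Lambda_\rho\|f_\perp\|^2\,,
\]
and similarly $\omega|Q|\le C\omega\Lambda_\rho\|f_\perp\|^2$. With $\omega=c_{\rm hyp}\rho/\Lambda_\rho$ and $c_{\rm hyp}$ small, this yields
\[
 \mathscr L_\omega(\opK f)-\mathscr L_\omega(f)\le-\tfrac{\omega}{2}\|f_{\mathsf P}\|^2-\tfrac{\rho}{2}\|f_\perp\|^2\le-c\,\tfrac{\rho}{\Lambda_\rho}\|f\|^2\,.
\]
Since $\omega\le\gamma_{\rm gap}$ (because $\Lambda_\rho\ge\rho/\gamma_{\rm gap}$ up to constants), \cref{lem:modified-L2-equivalence} converts this into the claimed bound relative to $\mathscr L_\omega(f)$.
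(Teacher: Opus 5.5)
Your architecture is the paper's: your $e$ is exactly its $r_\rho$, and Step~1 is correct. But Step~2 is false, and it is the step on which the lemma turns.

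In the term $-\langle\opUPP\opGammaP^{-1}g_{\mathsf P},g_{\mathsf V}\rangle$ you accounted for the perturbation $\opGammaP e$ inside $g_{\mathsf P}$. You missed the pairing of the unperturbed part $\opUPP\opGammaP^{-1}(\opUPP f_{\mathsf P}-\opGammaP f_{\mathsf V})$ with the perturbation $-\opUPP e$ inside $g_{\mathsf V}$. That pairing contributes $\langle\opUPP^2\opGammaP^{-1}f_{\mathsf P},e\rangle$, which carries a factor as large as $\gamma_{\rm gap}^{-1}$. Collecting all terms and using $\opGammaP+\opUPP^2\opGammaP^{-1}=\opGammaP^{-1}$ and $\opGammaP^2+\opUPP^2=I$, the exact identity is
\[
 \mathscr C(g_{\mathsf P},g_{\mathsf V})-\mathscr C(f_{\mathsf P},f_{\mathsf V})
 =-\|f_{\mathsf P}\|^2+\|f_{\mathsf V}\|^2
 +\bigl\langle\opUPP f_{\mathsf P}-\opGammaP f_{\mathsf V},\,\opGammaP^{-1}e\bigr\rangle
 +\tfrac12\|e\|^2\,.
\]
So $|B|\lesssim\|e\|\,(\|f_{\mathsf P}\|+\|f_{\mathsf V}\|)$ fails. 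It has to fail. Combined with the trivial bound $\|e\|\le2\|f_\perp\|$, it would finish Step~4 without ever using \cref{prop:global-halfturn-cutoff}. The argument would then hold for any self-adjoint contraction in place of $\opHperpperp$, including $I$. With $I$, the chain merely toggles $Y$ between resamplings, and already for a one-dimensional Gaussian target its slowest linear mode decays only at rate of order $\rho\alpha\eta$.

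The quantity to control is $\|\opGammaP^{-1}e\|$, not $\|e\|$; your Step~3 bound on $\|e\|$ is no better than the trivial one. Your duality display is the right tool. Since $\opGammaP$ maps $\cH_{\mathsf P,0}$ onto itself, the display is equivalent to
\[
 \|\opGammaP^{-1}\opVperpP^*(I+\opHperpperp)f_\perp\|^2\le C\log(1/\gamma_{\rm gap})\,\|f_\perp\|^2\,.
\]
Together with $\rho\,\|\opGammaP^{-1}f_{\mathsf V}\|\le(\rho/\gamma_{\rm gap})\,\|f_\perp\|$, this yields $\|\opGammaP^{-1}e\|^2\le C\Lambda_\rho\|f_\perp\|^2$. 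Because the idealized step is a rotation, $\|\opUPP f_{\mathsf P}-\opGammaP f_{\mathsf V}\|^2\le\|f_{\mathsf P}\|^2+\|f_{\mathsf V}\|^2$. Young's inequality then bounds the corrector change by $-\frac12\|f_{\mathsf P}\|^2+C\Lambda_\rho\|f_\perp\|^2$. From there your Step~4, including the check $\omega\le\gamma_{\rm gap}$, goes through, and this is exactly the paper's argument. Your Step~3 gestures at these $\opGammaP^{-1}$-weighted pairings, but it contradicts Step~2. It also leaves undone the one computation the lemma rests on: the exact form of the cross term.
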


\begin{proof}
Take \(c_0\le1/16\).  Then
\(\gamma_{\rm gap}\le2\sqrt{\alpha\eta}\le1/2\), and consequently
\(\Lambda_\rho\ge\log 2\).
We compare the actual update with the idealized operator analyzed above.
Let \(g\deq\opK_{\rm id}f\),
\(g_{\mathsf P}\deq\opP g\), and
\(g_{\mathsf V}\deq\opVperpP^*\opP^\perp g\).
By the definitions of \(\opK\) and \(\opK_{\rm id}\),
\begin{align*}
 \opK f-\opK_{\rm id}f
 &=\opU\bigl[I+(1-\rho)\opHperpperp\bigr]f_\perp\\
 &=\opU\bigl[\rho f_\perp
       +(1-\rho)(I+\opHperpperp)f_\perp\bigr]\,.
\end{align*}
Using \(\opUperpP^*=\opGammaP\opVperpP^*\) and
\(\opVperpP^*\opUperpperp=-\opUPP\opVperpP^*\), the two components of
this difference are
\begin{align*}
 \opP(\opK f-\opK_{\rm id}f)
 &=\opGammaP\opVperpP^*
     \bigl[I+(1-\rho)\opHperpperp\bigr]f_\perp\,,\\
 \opVperpP^*\opP^\perp(\opK f-\opK_{\rm id}f)
 &=-\opUPP\opVperpP^*
     \bigl[I+(1-\rho)\opHperpperp\bigr]f_\perp\,.
\end{align*}
The same vector therefore determines both components of the error.  Denote
it by
\[
 r_\rho
 \deq\opVperpP^*\bigl[I+(1-\rho)\opHperpperp\bigr]f_\perp
 =\rho f_{\mathsf V}+(1-\rho)r\,,
 \qquad
 r\deq\opVperpP^*(I+\opHperpperp)f_\perp\,.
\]
Both \(r\) and \(r_\rho\) belong to \(\cH_{\mathsf P,0}\).  In
\(r_\rho\), the first
term comes from refreshment, while the second measures the departure of
the half turn from \(-I_{\cH_\perp}\).
Consequently,
\begin{equation}
 (\opK f)_{\mathsf P}=g_{\mathsf P}+\opGammaP r_\rho\,,
 \qquad
 (\opK f)_{\mathsf V}=g_{\mathsf V}-\opUPP r_\rho\,.
 \label{eq:actual-versus-ideal-components}
\end{equation}
Substituting \eqref{eq:actual-versus-ideal-components} into the definition
\eqref{eq:discrete-corrector} of the corrector \(\mathscr C\) and expanding
gives
\begin{align*}
 &\mathscr C\bigl((\opK f)_{\mathsf P},(\opK f)_{\mathsf V}\bigr)
   -\mathscr C(g_{\mathsf P},g_{\mathsf V})\\
 &\qquad =\bigl\langle g_{\mathsf P},\opGammaP r_\rho\bigr\rangle
   +\bigl\langle g_{\mathsf V},\opUPP r_\rho\bigr\rangle
   +\frac12\,\bigl(\|\opGammaP r_\rho\|^2
                    -\|\opUPP r_\rho\|^2\bigr)\\
 &\qquad \qquad
                    +{}\bigl\langle \opUPP\opGammaP^{-1}g_{\mathsf P},
      \opUPP r_\rho\bigr\rangle
   -\bigl\langle \opUPP r_\rho,g_{\mathsf V}\bigr\rangle
   +\|\opUPP r_\rho\|^2\\
 &\qquad =\bigl\langle g_{\mathsf P},\opGammaP r_\rho\bigr\rangle
   +\bigl\langle \opUPP\opGammaP^{-1}g_{\mathsf P},
      \opUPP r_\rho\bigr\rangle
   +\frac12\,\bigl(\|\opGammaP r_\rho\|^2
                    +\|\opUPP r_\rho\|^2\bigr)\,.
\end{align*}
In the last equality, the two terms containing \(g_{\mathsf V}\) cancel.
For the terms linear in \(g_{\mathsf P}\), the commutation of
\(\opUPP\) with \(\opGammaP\) and the identity
\(\opUPP^2+\opGammaP^2=I\) give
\begin{align*}
 &\bigl\langle g_{\mathsf P},\opGammaP r_\rho\bigr\rangle
 +\bigl\langle \opUPP\opGammaP^{-1}g_{\mathsf P},
    \opUPP r_\rho\bigr\rangle\\
 &\qquad=\bigl\langle g_{\mathsf P},
   (\opGammaP+\opGammaP^{-1}\opUPP^2)r_\rho\bigr\rangle
 =\bigl\langle g_{\mathsf P},\opGammaP^{-1}r_\rho\bigr\rangle\,.
\end{align*}
The same identity reduces the quadratic terms to
\[
 \frac12\,\bigl(\|\opGammaP r_\rho\|^2
                   +\|\opUPP r_\rho\|^2\bigr)
 =\frac12\,\|r_\rho\|^2\,.
\]
Consequently, combined with \eqref{eq:corrector-cohomological-identity},
\begin{align}
 &\mathscr C\bigl((\opK f)_{\mathsf P},(\opK f)_{\mathsf V}\bigr)
   -\mathscr C(f_{\mathsf P},f_{\mathsf V})\notag\\
 &\quad=\bigl[
   \mathscr C\bigl((\opK f)_{\mathsf P},(\opK f)_{\mathsf V}\bigr)
   -\mathscr C(g_{\mathsf P},g_{\mathsf V})\bigr]
   +\bigl[\mathscr C(g_{\mathsf P},g_{\mathsf V})
   -\mathscr C(f_{\mathsf P},f_{\mathsf V})\bigr]\notag\\
 &\quad=-\|f_{\mathsf P}\|^2+\|f_{\mathsf V}\|^2
   +\bigl\langle g_{\mathsf P},\opGammaP^{-1}r_\rho\bigr\rangle
   +\frac12\,\|r_\rho\|^2\,.
 \label{eq:corrector-perturbation-identity}
\end{align}
Thus, it remains to estimate \(r_\rho\).  Let
\[
 \mathsf L\deq(I+\opHperpperp)\opVperpP\,.
\]
Since \(r=\mathsf L^*f_\perp\), applying
\eqref{eq:global-halfturn-cutoff} to \(\opGammaP^{-1}v\) and then taking
adjoints gives
\begin{align}
 \|\opGammaP^{-1}r\|^2
 =\|\opGammaP^{-1}\opVperpP^*(I+\opHperpperp)f_\perp\|^2
 &\le C\log(1/\gamma_{\rm gap})\,\|f_\perp\|^2\,.
 \label{eq:global-corrector-remainder}
\end{align}
Together with \(\opGammaP\succeq\gamma_{\rm gap}I\) and
\(\|f_{\mathsf V}\|\le\|f_\perp\|\), this gives
\begin{align}
 \|\opGammaP^{-1}r_\rho\|^2
 &\le2\rho^2\,\|\opGammaP^{-1}f_{\mathsf V}\|^2
    +2(1-\rho)^2\,\|\opGammaP^{-1}r\|^2
 \le C\Lambda_\rho\,\|f_\perp\|^2\,.
 \label{eq:actual-corrector-remainder}
\end{align}
Moreover, since \(\opVperpP^*\) and \(\opHperpperp\) are contractions, $\|r_\rho\|
 \le\rho\,\|f_{\mathsf V}\|+(1-\rho)\,\|r\|
 \le2\,\|f_\perp\|$.

The rotation formula for \(\opK_{\rm id}\) gives $\|g_{\mathsf P}\|^2
 \le\|g_{\mathsf P}\|^2+\|g_{\mathsf V}\|^2
 =\|f_{\mathsf P}\|^2+\|f_{\mathsf V}\|^2$.
Hence, Young's inequality and
\eqref{eq:actual-corrector-remainder} give
\begin{align*}
 \bigl|\bigl\langle g_{\mathsf P},
    \opGammaP^{-1}r_\rho\bigr\rangle\bigr|
 &\le\frac14\,\|g_{\mathsf P}\|^2
       +C\,\|\opGammaP^{-1}r_\rho\|^2\\
 &\le\frac14\,\|f_{\mathsf P}\|^2
       +\frac14\,\|f_{\mathsf V}\|^2
       +C\Lambda_\rho\,\|f_\perp\|^2\,.
\end{align*}
Inserting these estimates into
\eqref{eq:corrector-perturbation-identity}, and using
\(\|f_{\mathsf V}\|\le\|f_\perp\|\) and
\(\Lambda_\rho\ge\log 2\), gives
\begin{equation}
 \begin{aligned}
 &\mathscr C\bigl((\opK f)_{\mathsf P},(\opK f)_{\mathsf V}\bigr)
   -\mathscr C(f_{\mathsf P},f_{\mathsf V})
 \le-\frac12\,\|f_{\mathsf P}\|^2
   +C\Lambda_\rho\,\|f_\perp\|^2\,.
 \end{aligned}
 \label{eq:corrector-one-step-bound}
\end{equation}

Finally, \eqref{eq:unmodified-L2-dissipation} and
\eqref{eq:corrector-one-step-bound} give
\begin{align*}
 \mathscr L_\omega(\opK f)-\mathscr L_\omega(f)
 &\le-\rho\,\|f_\perp\|^2
  +\frac{c_{\rm hyp}\rho}{\Lambda_\rho}\,
  \Bigl(-\frac12\,\|f_{\mathsf P}\|^2
         +C\Lambda_\rho\,\|f_\perp\|^2\Bigr)\\
 &=-\frac{c_{\rm hyp}\rho}{2\Lambda_\rho}\,
      \|f_{\mathsf P}\|^2
   -(1-Cc_{\rm hyp})\,\rho\,\|f_\perp\|^2\,.
\end{align*}
Choose \(c_{\rm hyp}\le1\) sufficiently small so that
\(Cc_{\rm hyp}\le1/2\).  Since \(\Lambda_\rho\ge\log 2\), the last display is
then bounded above by
\[
 -\frac{c\rho}{\Lambda_\rho}\,
 \bigl(\|f_{\mathsf P}\|^2+\|f_\perp\|^2\bigr)
 =-\frac{c\rho}{\Lambda_\rho}\,\|f\|^2\,.
\]
It remains to compare the last norm with the Lyapunov function.  The
elementary inequality \(x/(a+x^2)\le1/(2\sqrt a)\), for \(a>0\) and
\(x\ge0\), gives
\[
 \frac{\omega}{\gamma_{\rm gap}}
 =c_{\rm hyp}\,\frac{\rho/\gamma_{\rm gap}}
 {\log(1/\gamma_{\rm gap})+(\rho/\gamma_{\rm gap})^2}
 \le\frac{c_{\rm hyp}}
 {2\sqrt{\log(1/\gamma_{\rm gap})}}
 \le\frac{c_{\rm hyp}}{2\sqrt{\log 2}}
 \le1\,.
\]
Thus \(\omega\le\gamma_{\rm gap}\), and the upper bound in
\cref{lem:modified-L2-equivalence} gives
\[
 \|f\|^2\ge\frac23\,\mathscr L_\omega(f)\,.
\]
Consequently,
\[
 \mathscr L_\omega(\opK f)
 \le\Bigl(1-\frac{2c\rho}{3\Lambda_\rho}\Bigr)\,
       \mathscr L_\omega(f)\,,
\]
which is \eqref{eq:modified-L2-one-step} after renaming the universal
constant.
\end{proof}

Combining
\cref{lem:modified-L2-equivalence,lem:modified-L2-one-step}, we can now
prove the \(L^2\) contraction in \cref{thm:ideal-convergence}.

\begin{proof}[Proof of \cref{thm:ideal-convergence}]
We first turn the one-step Lyapunov decay into a fixed \(L^2\) contraction
over a block of \(N\) steps, and then apply this contraction to densities.

Take \(c_0\) from \cref{lem:modified-L2-one-step} and choose a fixed
universal \(\rho_\star\in(0,1/2]\).  The one-step estimate requires
\(\rho\le1/2\).  Since
\(\alpha\eta\le\beta\eta\le1\) and
\(x\log(e/x)\le1\) for \(0<x\le1\), the prescribed refreshment
probability satisfies
\[
 0<\rho
 =\rho_\star\sqrt{\alpha\eta\log\frac e{\alpha\eta}}
 \le\rho_\star\le\frac12\,.
\]

The decay rate in \cref{lem:modified-L2-one-step} is
\(\rho/\Lambda_\rho\), so a fixed contraction is obtained by taking
\(N\) proportional to \(\Lambda_\rho/\rho\).  We first bound this ratio.
Recall that
\(\gamma_{\rm gap}\asymp\sqrt{\alpha\eta}\) and
\(\log(1/\gamma_{\rm gap})\asymp\log(e/(\alpha\eta))\).  Hence
\[
 \Bigl(\frac{\rho}{\gamma_{\rm gap}}\Bigr)^2
 \le C\log(1/\gamma_{\rm gap})\,,
 \qquad
 \rho\ge c\gamma_{\rm gap}\sqrt{\log(1/\gamma_{\rm gap})}\,.
\]
It follows from the definition of \(\Lambda_\rho\) that
\begin{align*}
 \frac{\Lambda_\rho}{\rho}
 &\le C\,\frac{\sqrt{\log(1/\gamma_{\rm gap})}}{\gamma_{\rm gap}}
 \le\frac{C}{\sqrt{\alpha\eta}}
       \sqrt{\log\frac e{\alpha\eta}}\,.
\end{align*}
Choose a sufficiently large universal constant \(C_1\) and consider
\begin{equation}
 N\geq C_1\,\frac{\Lambda_\rho}{\rho}\,.
 \label{eq:block-length}
\end{equation}
For \(f\in L_0^2(\pi_\eta)\), take \(\omega\) as in
\eqref{eq:refreshment-scale}.  Iterating
\eqref{eq:modified-L2-one-step} over this block gives
\[
 \mathscr L_\omega(\opK^Nf)
 \le\Bigl(1-\frac{c\rho}{\Lambda_\rho}\Bigr)^N\,
      \mathscr L_\omega(f)\,.
\]
Applying the lower bound in \eqref{eq:modified-L2-equivalence} to
\(\opK^Nf\) and the upper bound to \(f\), we obtain
\begin{align*}
 \|\opK^Nf\|^2
 &\le2\mathscr L_\omega(\opK^Nf)\le2\,\Bigl(1-\frac{c\rho}{\Lambda_\rho}\Bigr)^N\,
       \mathscr L_\omega(f)\le3\,\Bigl(1-\frac{c\rho}{\Lambda_\rho}\Bigr)^N\,\|f\|^2\le3e^{-cC_1}\,\|f\|^2\,.
\end{align*}
The last step uses \(1-x\le e^{-x}\) and
\(N\ge C_1\Lambda_\rho/\rho\).
Taking \(C_1\) sufficiently large makes \(3e^{-cC_1}\le e^{-1}\).  Thus,
\begin{equation}
 \|\opK^Nf\|^2
 \le e^{-1}\,\|f\|^2\,.
 \label{eq:block-L2}
\end{equation}
Iterating by blocks, the \(L^2\) contraction
\eqref{eq:block-L2} is equivalently, by \eqref{eq:chi-L2} and the
adjoint remark following it, the \(\chi^2\)-estimate
\[
 \chi^2\bigl(\nu_0\mathcal K^{JN}\|\pi_\eta\bigr)
 \le e^{-J}\chi^2(\nu_0\|\pi_\eta)\,,
\]
which proves \eqref{eq:block-chi}.  The \(X\)-marginal of
\(\pi_\eta\) is \(\mu\), and total variation cannot increase under
projection.  Moreover, Cauchy--Schwarz gives
\(\TV(P,Q)\le\tfrac12\sqrt{\chi^2(P\|Q)}\), which gives \eqref{eq:block-TV}.
\end{proof}

\section{Macroscopic coercivity and the half-turn bound}
\label{app:key-estimates}

This appendix establishes the two model-specific estimates used by the
modified \(L^2\) method argument of Appendix~\ref{sec:micro-macro}: the
macroscopic coercivity lemma
(\cref{lem:macroscopic-coercivity}) and the global half-turn bound
(\cref{prop:global-halfturn-cutoff}).

\subsection{Proof of
  \texorpdfstring{\cref{lem:macroscopic-coercivity}}
  {Lemma~\ref*{lem:macroscopic-coercivity}}}
\label{app:proof-coercivity}

\begin{proof}[Proof of \cref{lem:macroscopic-coercivity}]
We first prove
\eqref{eq:UPP-sharp-smoothing}.  It suffices to consider
\(f\in C_{\rm c}^\infty(\R^d)\), the general case following by density and
closedness of the gradient.

Recall that the conditional
density \(p_y\) of \(Y_-\) given \(Y_+=y\) is
\[
 p_y(u)\propto
 \exp \Bigl\{
   -V((y+u)/2)-\frac{\|y-u\|^2}{8\eta}
 \Bigr\}\,,
\]
and define
\[
 s_y(u)\deq
 -\frac12\,\nabla V((y+u)/2)-\frac{y-u}{4\eta}\,.
\]
Differentiating the normalized conditional density gives
\begin{equation}
 \nabla\opUPP f(y)
 =
 \Cov \bigl(f(Y_-),s_y(Y_-)\bigm\vert Y_+=y\bigr)\,.
 \label{eq:UPP-sharp-derivative}
\end{equation}

The negative log-density of \(p_y\) has Hessian
\[
 \frac14\,\bigl\{
   \nabla^2V((y+u)/2)+\eta^{-1}I
 \bigr\}
 \succeq
 \frac{\alpha+\eta^{-1}}4\,I\,.
\]
Hence, the conditional Poincar\'e inequality implies, for every unit
vector \(a\),
\begin{align*}
 \Var \bigl(a^\top s_y(Y_-)\bigm\vert Y_+=y\bigr)
 &\le
 \frac4{\alpha+\eta^{-1}}\,
 \E \bigl[
   \|\nabla_u(a^\top s_y)(Y_-)\|^2
   \,\bigm|\,Y_+=y
 \bigr]\,.
\end{align*}
The assumptions
\(\alpha I\preceq\nabla^2V\preceq\beta I\) and
\(\beta\eta\le1\) give
\[
 \|\nabla_us_y(u)\|_{\rm op}
 \le \frac{\eta^{-1}-\alpha}{4}\,.
\]
Consequently,
\[
 \Var \bigl(a^\top s_y(Y_-)\bigm\vert Y_+=y\bigr)
 \le
 \frac{(\eta^{-1}-\alpha)^2}
      {4\,(\alpha+\eta^{-1})}\,.
\]
Applying conditional Cauchy--Schwarz in
\eqref{eq:UPP-sharp-derivative} and taking the supremum over
\(\|a\|=1\) yields
\[
 \|\nabla\opUPP f(y)\|^2
 \le
 \frac{(\eta^{-1}-\alpha)^2}
      {4\,(\alpha+\eta^{-1})}
 \Var(f(Y_-)\mid Y_+=y)\,.
\]
Integrating in \(y\) and using \eqref{eq:UPP-exchangeable},
\begin{align}
  \eta\,\|\nabla\opUPP f\|^2
 \le
 \frac{(1-\alpha\eta)^2}{4\,(1+\alpha\eta)}\,
 \|\opUperpP f\|^2\,.
 \label{eq:UPP-sharp-smoothing-full}
\end{align}
Finally,
\(\|\opUperpP f\|^2
 =\|\opGammaP f\|^2
 =\|f\|^2-\|\opUPP f\|^2\),
which proves \eqref{eq:UPP-sharp-smoothing}.

We next bound the spectrum of \(\opUPP\).  Let \(f\in\cH_{\mathsf P,0}\).
Since \(\opUPP\) preserves constants, \(\opUPP f\) is centered.
The Poincar\'e inequality for the marginal law \(\pi_\eta^Y\) gives
\begin{equation}
 \|\opUPP f\|^2=\Var_{\pi_\eta^Y}(\opUPP f)
 \le
 \Bigl(\frac1\alpha+\eta\Bigr)\,
 \|\nabla\opUPP f\|^2\,.
 \label{eq:Y-marginal-poincare-short}
\end{equation}
Combining \eqref{eq:Y-marginal-poincare-short} with
\eqref{eq:UPP-sharp-smoothing-full} and rearranging gives
\begin{equation}
 \|\opUPP f\|
 \le
 \frac{1-\alpha \eta}{1+\alpha \eta}\,\|f\|
 =\rho_{\rm mac}\,\|f\|\,.
 \label{eq:UPP-sharp-contraction-proof}
\end{equation}
Since \(\opUPP\) is self-adjoint, this proves
\[
 -\rho_{\rm mac}I\preceq\opUPP\preceq\rho_{\rm mac}I\,.
\]
It remains only to obtain the uniform lower bound
\(\opUPP\succeq-\frac12\,I\).
For every \(g\in H^1(\pi_\eta^Y)\), exchangeability and
\eqref{eq:Yplusminus} give
\begin{align}
 \bigl\langle g,(I-\opUPP)g\bigr\rangle
 &=
 \frac12\,\E\bigl[\bigl(
     g(Y_+)-g(Y_-)\bigr)^2
 \bigr]
 =
 \frac12\,\E_X\E_Z
 \bigl[\bigl(
   g(X+\sqrt\eta Z)-g(X-\sqrt\eta Z)
 \bigr)^2\bigr]\,.
 \label{eq:I-minus-UPP-dirichlet}
\end{align}
For fixed \(X=x\), the Gaussian Poincar\'e inequality implies
\begin{align*}
 \E_Z
 \bigl[\bigl(
     g(x+\sqrt\eta Z)-g(x-\sqrt\eta Z)\bigr)^2
 \bigr]
 &\le
 \eta\,\E_Z
 \bigl\|
   \nabla g(x+\sqrt\eta Z)
   +\nabla g(x-\sqrt\eta Z)
 \bigr\|^2\,.
\end{align*}
Averaging in \(X\) and using that \(Y_+\) and \(Y_-\) have the same
marginal gives
\begin{equation}
 \bigl\langle g, (I-\opUPP)g\bigr\rangle
 \le
 2\eta\,\|\nabla g\|^2\,.
 \label{eq:I-minus-UPP-gradient}
\end{equation}

Applying \eqref{eq:I-minus-UPP-gradient} with \(g=\opUPP f\) and using
\eqref{eq:UPP-sharp-smoothing}, we obtain
\begin{align*}
 \bigl\langle f,\opUPP^2(I-\opUPP)f\bigr\rangle
 &=\bigl\langle \opUPP f,(I-\opUPP)\opUPP f\bigr\rangle
 \le2\eta\,\|\nabla\opUPP f\|^2
 \le
 \frac12\,\bigl\langle f,(I-\opUPP^2)f\bigr\rangle\,.
\end{align*}
Thus, as an operator inequality, $2\opUPP^2(I-\opUPP)\preceq I-\opUPP^2$,
or equivalently
\begin{equation}
 (I-\opUPP)^2(I+2\opUPP)\succeq0\,.
 \label{eq:UPP-polynomial-lower-bound}
\end{equation}
Since \(\opUPP\) is a self-adjoint contraction, its spectrum lies in
\([-1,1]\).  The scalar inequality corresponding to
\eqref{eq:UPP-polynomial-lower-bound} is
\[
 (1-\lambda)^2(1+2\lambda)\ge0\,,
 \qquad \lambda\in\operatorname{Spec}(\opUPP)\,,
\]
and therefore \(\operatorname{Spec}(\opUPP)\subseteq[-1/2,1]\).  Hence
\(-\frac12I\,\preceq\opUPP\).
Together with \eqref{eq:UPP-sharp-contraction-proof}, this proves
\eqref{eq:UPP-two-sided-bound}.

Finally,
\[
 \opGammaP^2
 =I-\opUPP^2
 \succeq
 (1-\rho_{\rm mac}^2)\,I
 =
 \frac{4\alpha\eta}{(1+\alpha\eta)^2}\,I\,,
\]
and therefore
\[
 \opGammaP
 \succeq
 \frac{2\sqrt{\alpha\eta}}{1+\alpha\eta}\,I
 =\gamma_{\rm gap}I\,.
\]
This proves \eqref{eq:macroscopic-coercivity}.
\end{proof}

The half-turn analysis below uses the following alternative form of
\eqref{eq:UPP-sharp-derivative}.

\begin{lemma}[Derivative formula for \(\opUPP\)]
\label{lem:UPP-derivative}
For every \(f\in H^1(\pi_\eta^Y)\) and \(\pi_\eta^Y\)-almost every \(y\),
\begin{equation}
 \nabla\opUPP f(y)
 =\E[\nabla f(Y_-)\mid Y_+=y]
 -\Cov(f(Y_-),\nabla V(X)\mid Y_+=y)\,.
 \label{eq:UPP-derivative-app}
\end{equation}
\end{lemma}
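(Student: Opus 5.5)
Under \(\pi_\eta\), write \(X=(Y_++Y_-)/2\). Conditionally on \(Y_+=y\), the variable \(X\) is distributed as \(\pi_\eta^{X\mid Y=y}\), with potential \(V_y(x)=V(x)+\|x-y\|^2/(2\eta)\). Also \(Y_-=2X-y\). Hence
\[
 \opUPP f(y)=\int f(2x-y)\,\pi_\eta^{X\mid Y=y}(\dd x)
 =\frac{\int f(2x-y)\,e^{-V_y(x)}\,\dd x}{\int e^{-V_y(x)}\,\dd x}\,.
\]
Rather than differentiating the \(Y_-\)-density as in \eqref{eq:UPP-sharp-derivative}, I will change variables to \(w=x-y\). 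This gives
\[
 \opUPP f(y)=\frac{\int f(y+2w)\,e^{-V(y+w)-\|w\|^2/(2\eta)}\,\dd w}{\int e^{-V(y+w)-\|w\|^2/(2\eta)}\,\dd w}\,,
\]
so \(y\) now enters only through \(f(y+2w)\) and \(V(y+w)\), and the Gaussian factor is independent of \(y\).

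The plan is to differentiate under the integral for \(f\in C_{\rm c}^\infty\). Differentiating the numerator produces two terms: \(\int\nabla f(y+2w)\,(\cdots)\) and \(-\int f(y+2w)\nabla V(y+w)\,(\cdots)\). Differentiating the normalizer produces \(+\opUPP f(y)\cdot\E[\nabla V(X)\mid Y_+=y]\). Returning to the original variables, \(y+2w=Y_-\) and \(y+w=X\), and the three pieces combine into
\[
 \E[\nabla f(Y_-)\mid Y_+=y]-\bigl(\E[f(Y_-)\nabla V(X)\mid Y_+=y]-\E[f(Y_-)\mid Y_+=y]\,\E[\nabla V(X)\mid Y_+=y]\bigr)\,,
\]
which is exactly \eqref{eq:UPP-derivative-app}. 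Exchanging derivative and integral is justified by domination. Since \(\|\nabla V(y+w)\|\le\|\nabla V(y)\|+\beta\|w\|\) and \(V(y+w)\ge V(y)+\langle\nabla V(y),w\rangle\), for \(y\) in a compact set the integrands are dominated by \(C(1+\|w\|)e^{C\|w\|-\|w\|^2/(2\eta)}\), which is integrable. As a consistency check, Gaussian integration by parts in \(w\) converts this formula into \eqref{eq:UPP-sharp-derivative}.

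The main obstacle is extending the identity from \(C_{\rm c}^\infty\) to all \(f\in H^1(\pi_\eta^Y)\) with the stated almost-everywhere meaning. The plan is to take \(f_n\in C_{\rm c}^\infty\) with \(f_n\to f\) in \(H^1(\pi_\eta^Y)\) and pass to the limit on both sides.

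On the left-hand side, \(\opUPP f_n\to\opUPP f\) in \(L^2\) because \(\opUPP\) is a contraction. By \eqref{eq:UPP-sharp-smoothing}, \(\nabla\opUPP f_n\to\nabla\opUPP f\) in \(L^2\).

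On the right-hand side, the term \(\E[\nabla f_n(Y_-)\mid Y_+]\) converges in \(L^2\) by conditional Jensen and exchangeability. For the covariance term, conditional Cauchy--Schwarz gives the pointwise bound
\[
 \|\Cov(g(Y_-),\nabla V(X)\mid Y_+=y)\|\le\Var(g(Y_-)\mid Y_+=y)^{1/2}\,\sup_{\|a\|=1}\Var(a^\top\nabla V(X)\mid Y_+=y)^{1/2}\,.
\]
By the conditional Poincar\'e inequality \eqref{eq:conditional-poincare-prelim}, the last factor is at most \(\beta\sqrt\eta\). Therefore the covariance term is bounded in \(L^2\) by \(\beta\sqrt\eta\,\|g\|\), and it converges in \(L^2\) as \(f_n\to f\). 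Extracting an almost-everywhere convergent subsequence then yields the identity for \(\pi_\eta^Y\)-almost every \(y\).
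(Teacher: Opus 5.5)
Your proof is correct, but the smooth case is handled differently from the paper.

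\textbf{The smooth case.} The paper parametrizes the conditional law by \(u=Y_-\), so that \(y\) enters only through the density \(p_y(u)\). Differentiating gives the score-function representation \eqref{eq:UPP-sharp-derivative}, namely \(\nabla\opUPP f(y)=\Cov(f(Y_-),s_y(Y_-)\mid Y_+=y)\). The paper then splits \(s_y\) and uses the integration-by-parts identity \eqref{eq:Yminus-IBP}, together with a cutoff argument for \(f\equiv1\), to convert one piece of that covariance into \(\E[\nabla f(Y_-)\mid Y_+=y]\).

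Your shift \(w=x-y\) instead makes the Gaussian factor independent of \(y\) and puts the \(y\)-dependence into \(f(y+2w)\) and \(V(y+w)\). Differentiating under the integral then produces \(\nabla f(Y_-)\) and \(-\nabla V(X)\) directly. The two parametrizations differ by exactly that integration by parts.

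\textbf{What each route buys.} Your version is more elementary and self-contained. Your explicit domination bound also justifies the exchange of derivative and integral, which the paper leaves implicit. The paper's route has two advantages of its own. First, the score-function formula needs no derivative of \(f\); this is what gives the \(L^2\)-smoothing estimate \eqref{eq:UPP-sharp-smoothing} that you invoke. Second, the identity \eqref{eq:Yminus-IBP} is reused later, in \cref{lem:conditional-gradient-covariance}.

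\textbf{The \(H^1\) extension.} This step matches the paper's. Your covariance bound \(\beta\sqrt\eta\,\|f\|\) is weaker than the paper's \(\beta\sqrt\eta\,\|\opUperpP f\|\), but it suffices. You could also avoid \eqref{eq:UPP-sharp-smoothing} here. Your own formula shows that \(\nabla\opUPP f_n\) is Cauchy in \(L^2\), with bound \(\|\nabla(f_n-f_m)\|+\beta\sqrt\eta\,\|f_n-f_m\|\). Closedness of the weak gradient then identifies the limit as \(\nabla\opUPP f\).
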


\begin{proof}
First let \(f\in C_{\rm c}^\infty(\R^d)\).  The representation
\eqref{eq:UPP-sharp-derivative} gives
\begin{equation*}
 \nabla\opUPP f(y)
 =\Cov(f(Y_-),s_y(Y_-)\mid Y_+=y)\,.
\end{equation*}
Using the definition of \(s_y\), we write
\begin{equation*}
 s_y(u)
 =\Bigl\{\frac12\nabla V((y+u)/2)-\frac{y-u}{4\eta}\Bigr\}
  -\nabla V((y+u)/2)\,.
\end{equation*}
To rewrite \eqref{eq:UPP-sharp-derivative} in the claimed form, we
only need to identify the covariance generated by the vector in braces.
Integration by parts in \(u\) against the conditional density \(p_y\) gives
\begin{equation}
 \E[\nabla f(Y_-)\mid Y_+=y]
 =\E \Bigl[
 f(Y_-)\,\Bigl\{\frac12\nabla V(X)-\frac{y-Y_-}{4\eta}\Bigr\}
 \ \Bigm|\ Y_+=y\Bigr]\,.
 \label{eq:Yminus-IBP}
\end{equation}
Applying \eqref{eq:Yminus-IBP} to \(f\equiv1\), justified by compactly
supported cutoffs converging to \(1\), gives
\begin{equation*}
 \E \Bigl[
  \frac12\nabla V(X)-\frac{y-Y_-}{4\eta}
  \,\Bigm|\,Y_+=y\Bigr]=0\,.
\end{equation*}
Combining this with \eqref{eq:Yminus-IBP}, we obtain
\begin{align*}
 \nabla\opUPP f(y)
 &=\Cov \Bigl(
    f(Y_-),\frac12\nabla V(X)-\frac{y-Y_-}{4\eta}
    \,\Bigm|\,Y_+=y\Bigr)
   -\Cov(f(Y_-),\nabla V(X)\mid Y_+=y)\\
 &=\E \Bigl[
    f(Y_-)\,\Bigl\{\frac12\nabla V(X)-\frac{y-Y_-}{4\eta}\Bigr\}
    \,\Bigm|\,Y_+=y\Bigr]
   -\Cov(f(Y_-),\nabla V(X)\mid Y_+=y)\\
 &=\E[\nabla f(Y_-)\mid Y_+=y]
   -\Cov(f(Y_-),\nabla V(X)\mid Y_+=y)\,.
\end{align*}
This is \eqref{eq:UPP-derivative-app} for smooth \(f\).

The extension to \(H^1(\pi_\eta^Y)\) is a standard approximation
argument once we check that the covariance term is continuous in
\(L^2\).  By \eqref{eq:conditional-poincare-prelim}, the conditional
law of \(X\) given \(Y_+=y\) has Poincar\'e constant at most
\(\eta/(1+\alpha\eta)\le\eta\).  Hence, for every unit vector
\(a\in\R^d\),
\begin{equation*}
 \Var(a^\top\nabla V(X)\mid Y_+=y)
 \le\eta\,\E[\|\nabla^2V(X)\,a\|^2\mid Y_+=y]
 \le\beta^2\eta\,.
\end{equation*}
Conditional Cauchy--Schwarz and
\eqref{eq:UPP-exchangeable} therefore imply
\[
 \bigl\|
 \Cov(f(Y_-),\nabla V(X)\mid Y_+)
 \bigr\|_{L^2}
 \le\beta\sqrt\eta\,\|\opUperpP f\|\,,
\]
and the same bound applies to differences.  Conditional expectation is
an \(L^2\) contraction, and \eqref{eq:UPP-sharp-smoothing} controls the
left side.  Thus every term in \eqref{eq:UPP-derivative-app} is
continuous under \(H^1\) approximation.  Density of
\(C_{\rm c}^\infty(\R^d)\) in \(H^1(\pi_\eta^Y)\) completes the proof.
\end{proof}

\subsection{The half-turn operator on affine functions}
\label{sec:affine-halfturn}

This subsection establishes a conditional half-turn estimate for affine
observables.  After rescaling \(X-y\) by the Gaussian scale
\(\sqrt\eta\), we show that the half-turn nearly maps every affine
function with mean zero to its negative.  The proof combines a Duhamel
representation of the half-turn with covariance identities for the
rescaled conditional law.

In \cref{app:global-halfturn-cutoff}, this estimate will be applied to
the conditional affine part of \(\opUperpP w\) in the proof of
\cref{prop:global-halfturn-cutoff}.

We first fix \(y\in\R^d\) and work under the conditional law of \(X\)
given \(Y=y\).  The bounds below are uniform in \(y\) and will later be
integrated over \(y\).  To put the Gaussian confinement and harmonic
dynamics on unit scale, set
\begin{equation*}
 W\deq\frac{X-y}{\sqrt\eta}\,,
 \qquad
 q_y\deq\Law(W\mid Y=y)\,.
\end{equation*}
The density of \(q_y\) and the scaled potential gradient are
\begin{equation*}
 q_y(\dd w)
 \propto
 \exp \Bigl\{-V(y+\sqrt\eta w)-\frac12\|w\|^2\Bigr\}\,\dd w\,,
 \qquad
 g_y(w)\deq\sqrt\eta\,\nabla V(y+\sqrt\eta w)\,.
\end{equation*}
Thus, the negative log-density of \(q_y\) has Hessian
\(I+\nabla g_y\), and
\begin{equation}
 \nabla\log q_y(w)=-(w+g_y(w))\,,
 \qquad
 \alpha\eta I\preceq\nabla g_y(w)\preceq\beta\eta I\,.
 \label{eq:normalized-log-density-curvature}
\end{equation}
In particular, \(q_y\) is \(1\)-strongly log-concave.  We use the same
symbol \(\opH_y\) for the conditional half-turn operator in the
normalized coordinate \(w=(x-y)/\sqrt\eta\):
\begin{equation*}
 (\opH_y f)(w)
 \deq\int_{\R^d}
 f \Bigl(\frac{x'-y}{\sqrt\eta}\Bigr)\,
 \mathcal H_y(y+\sqrt\eta w,\dd x')\,.
\end{equation*}
By \cref{prop:half-invariance}, \(\opH_y\) is a self-adjoint Markov
contraction on \(L^2(q_y)\).

Put
\begin{equation*}
 \bar w_y\deq\E_{q_y}W\,,
 \qquad
 \Sigma_y\deq\Cov_{q_y}(W)\,.
\end{equation*}
Thus, \(\Sigma_y\) is the conditional covariance in the normalized
coordinate; equivalently, we have the identity
\(\Cov(X\mid Y=y)=\eta\Sigma_y\).  Accordingly,
\[
 I-\Sigma_y=\eta\nabla^2U_\eta(y)\,,
\]
by \eqref{eq:Moreau-Hessian}.  Thus \(I-\Sigma_y\) is both the covariance
defect relative to a standard Gaussian and the normalized curvature of
the \(Y\)-marginal.  For \(a\in\R^d\), write
\(\ell_{a,y} : w\mapsto a^\top(w-\bar w_y)\); this affine function has mean zero
under \(q_y\).

The main result of this subsection is the following estimate.

\begin{proposition}[Weighted half-turn estimate for affine functions]
\label{prop:weighted-affine-halfturn}
Assume \(\beta\eta\le1/2\).  Uniformly in \(y\), every
\(\ell_{a,y} : w\mapsto a^\top(w-\bar w_y)\) satisfies
\begin{equation}
 \|(I+\opH_y)\ell_{a,y}\|_{L^2(q_y)}^2
 \le C\beta\eta\,a^\top(I-\Sigma_y)a\,.
 \label{eq:weighted-affine-halfturn}
\end{equation}
More generally, suppose that \(y\mapsto a_y\) is measurable and
\(\int a_y^\top(I-\Sigma_y)a_y\,\pi_\eta^Y(\dd y)<\infty\), and define
\[
 \ell(x,y)\deq
 a_y^\top\Bigl(\frac{x-y}{\sqrt\eta}-\bar w_y\Bigr)\,.
\]
Then, \(\ell\in\cH_\perp\) and
\begin{equation*}
 \|(I+\opHperpperp)\ell\|^2
 \le C\beta\eta
 \int a_y^\top(I-\Sigma_y)a_y\,\pi_\eta^Y(\dd y)\,.
\end{equation*}
\end{proposition}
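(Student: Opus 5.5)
Since \(\opH_y\) is a self-adjoint Markov contraction with \(-I\preceq\opH_y\preceq I\), we have
\[
\|(I+\opH_y)\ell\|^2\le 2\langle \ell,(I+\opH_y)\ell\rangle_{q_y}.
\]
So it suffices to bound the correlation \(\E[\ell(W_0)\ell(W_\pi)]+\|\ell\|^2\) along a stationary half-turn trajectory. Here \(W_0\sim q_y\), \(\widetilde W\sim q_y\) is the independent reference point, and \(p_0\sim\cN(0,I)\). In normalized coordinates the dynamics read \(\dot w=p\) and \(\dot p=-(w+g_y(\widetilde W))+\text{bounces}\) about the centre \(\tilde c=-g_y(\widetilde W)\).

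I will use a Duhamel representation. Without bounces \(W_\pi=2\tilde c-W_0\). With bounces, each jump changes \(p\) by \(-2\frac{(p^\top h)h}{\|h\|^2}\), where \(h=g_y(W_t)-g_y(\widetilde W)\). Variation of constants for the harmonic oscillator gives
\[
W_\pi+W_0-2\tilde c=\sum_{\text{bounces }t_j}\sin(\pi-t_j)\,\Delta p_{t_j}.
\]
Taking the expectation of the compensated jump measure, this is \(\int_0^\pi \sin(\pi-t)\,\E[\cdot]\,dt\) with integrand \(-2[p^\top h]_+ h/\|h\|\cdot(\ldots)\), plus a martingale.

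Hence
\[
(I+\opH_y)\ell_{a,y}(w)=a^\top\E\bigl[W_\pi+W_0-2\bar w_y\mid W_0=w\bigr].
\]
I split this into two terms. The first is \(2a^\top(\E[\tilde c]-\bar w_y)=-2a^\top(\E g_y(\widetilde W)+\bar w_y)\), which vanishes by integration by parts, since \(\E_{q_y}[w+g_y(w)]=0\). The second is a fluctuation term: \(-2a^\top(g_y(\widetilde W)-\E g_y)\), which is independent of \(W_0\) and so contributes nothing to the conditional expectation. What remains is \(a^\top\E[\text{bounce integral}\mid W_0=w]\).

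I bound the bounce term in \(L^2(q_y)\) by Jensen and stationarity. Its square is at most
\[
C\,\E_{\rm stat}\!\int_0^\pi [P^\top h]_+^2\frac{(a^\top h)^2}{\|h\|^2}\,dt\le C\,\E_{\rm stat}\,(a^\top h)^2\cdot(\text{a tail factor}).
\]
More carefully, I use the product form and Cauchy--Schwarz with Gaussian moments of \(P\), giving roughly \(\E[(a^\top h)^2]\). Then
\[
\E(a^\top h)^2=2\Var_{q_y}(a^\top g_y(W))\le 2\,\E\|\nabla g_y^\top a\|^2.
\]
This must be compared with \(\beta\eta\,a^\top(I-\Sigma_y)a\). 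Here I use the covariance identity \(I-\Sigma_y=\E_{q_y}[\nabla g_y]-\Cov(g_y)\), which follows from integration by parts: \(\Cov(W,W+g_y)=I\) and \(\Cov(W,g_y)=\E\nabla g_y\Sigma\) up to symmetrization. Since \(\nabla g_y\preceq\beta\eta I\), we get \(\E\|\nabla g_y a\|^2\le\beta\eta\,a^\top\E[\nabla g_y]a\). Also \(a^\top\E\nabla g_y a\le C\,a^\top(I-\Sigma_y)a\) when \(\beta\eta\le1/2\), because \(\Cov(g_y)\) is a second-order term, of size \(\beta\eta\) times \(\E\nabla g_y\) by Brascamp--Lieb. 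Together these give the claimed bound \(C\beta\eta\,a^\top(I-\Sigma_y)a\).

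The main obstacle is justifying the jump-term bound. The magnitude of \(\E[\cdot\mid W_0]\) involves \([P^\top h]_+\) times \(|a^\top h|/\|h\|\cdot|P^\top h|/\|h\|\), and squaring must produce \((a^\top h)^2\) without losing a factor of \(d\). I would use \(\Delta p=-2(p^\top \hat h)\hat h\), with \(\hat h=h/\|h\|\). Then the integrand is \(-2(p^\top h)[p^\top h]_+\,a^\top h/\|h\|^2\), whose stationary second moment is bounded by \(C\,\E\bigl[(P^\top\hat h)^4(a^\top h)^2\bigr]=3C\,\E(a^\top h)^2\), using independence of \(P\) from positions at stationarity.

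The global statement then follows by applying the bound with \(a=a_y\) for each \(y\) and integrating over \(\pi_\eta^Y\). Measurability of the map \(y\mapsto a_y\) and \(\ell\in\cH_\perp\) hold because \(\ell\) has conditional mean zero, and its \(L^2\) norm \(\int a_y^\top\Sigma_y a_y\) is finite. The identity \(\opHperpperp\ell=\opH\ell\) on \(\cH_\perp\) completes the argument.
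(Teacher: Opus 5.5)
Your proposal is correct and is essentially the paper's argument. The chain is the same: a Duhamel (variation-of-constants) representation of $(I+\opH_y)\ell_{a,y}$, then stationarity of $q_y\otimes\cN(0,I)\otimes q_y$ plus Jensen to reduce to a Gaussian second moment of order $\Var_{q_y}(a^\top g_y(W))$, then $\E\nabla g_y=(I-\Sigma_y)+\Cov(g_y)$ with Poincar\'e and absorption, which is the paper's conditional gradient covariance lemma. The only difference is cosmetic: you remove the centre-mismatch term by conditioning on $W_0$, whereas the paper keeps it inside $k_{a,\widetilde w}$.

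Two small repairs are needed. First, ``$\Cov(W,g_y)=\E\nabla g_y\,\Sigma$ up to symmetrization'' is not valid in general (it is exact only when $g_y$ is affine). Integration by parts $\E\nabla F(W)=\E[F(W)\{W+g_y(W)\}]$ with $F=W$ and $F=g_y$ gives $\Cov(W,g_y)=I-\Sigma_y$ and the identity you actually use. Second, finiteness of $\|\ell\|^2=\int a_y^\top\Sigma_y a_y\,\pi_\eta^Y(\dd y)$ does not follow directly from the stated hypothesis; it needs $\Sigma_y\preceq I$ together with $I-\Sigma_y\succeq\frac{\alpha\eta}{1+\alpha\eta}I$.
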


We prove the proposition after two preparatory results.  The first gives a
Duhamel representation of the deviation from exact sign reversal; the
second bounds the resulting term in terms of $I - \Sigma_y$.

To analyze \(\opH_y\ell_{a,y}\), first fix a reference point
\(\widetilde w\in\R^d\) in the rescaled coordinate.  The resulting
phase-space process is
\begin{equation*}
 \begin{gathered}
  \dot w=p\,,
  \qquad
  \dot p=-\{w+g_y(\widetilde w)\}\,,\\
  \text{event rate }[p^\top\{g_y(w)-g_y(\widetilde w)\}]_+\,,\\
  p\longmapsto R_{g_y(w)-g_y(\widetilde w)}p
  \quad\text{at a bounce}\,.
 \end{gathered}
\end{equation*}
The construction and path reversal arguments of
Appendix~\ref{app:halfturn-process} apply verbatim, with phase space
law \(q_y\otimes\cN(0,I)\).  Denote the resulting strongly continuous
contraction semigroup by
\((\mathsf T_t^{y,\widetilde w})_{t\ge0}\).  On smooth compactly
supported observables, its generator is
\begin{equation*}
 \begin{split}
 \opL_{y,\widetilde w}f(w,p)
 &\deq p^\top\nabla_w f(w,p)
   -\{w+g_y(\widetilde w)\}^\top\nabla_p f(w,p)\\
   &\qquad{}+[p^\top\{g_y(w)-g_y(\widetilde w)\}]_+
 \bigl\{f(w,R_{g_y(w)-g_y(\widetilde w)}p)-f(w,p)\bigr\}\,.
 \end{split}
\end{equation*}
The operator \(\opH_y\) is
recovered by drawing \(\widetilde W\sim q_y\), running the corresponding
process for time \(\pi\), and averaging over \(\widetilde W\) and the
initial Gaussian momentum.

If \(g_y\equiv0\), then \(q_y=\cN(0,I)\), \(\bar w_y=0\), and the bounce
rate vanishes.  The dynamics reduce to harmonic motion,
\[
 w_t=w_0\cos t+p_0\sin t\,,
 \qquad
 p_t=-w_0\sin t+p_0\cos t\,,
\]
so \(\ell_{a,y}(w_\pi)=-\ell_{a,y}(w_0)\).  For general \(g_y\), the
discrepancy between the harmonic center \(-g_y(\widetilde w)\) and the
conditional mean \(\bar w_y\), together with the residual gradient
bounces, produces an inhomogeneous term.  The next lemma identifies this
term.

\begin{lemma}[Affine Duhamel formula]
\label{lem:affine-duhamel}
For \(a\in\R^d\) and \(\widetilde w\in\R^d\), set
\begin{equation*}
 k_{a,\widetilde w}(w,p)
 \deq-a^\top\{\bar w_y+g_y(\widetilde w)\}
 -2\,[p^\top\{g_y(w)-g_y(\widetilde w)\}]_+^2\,
   \frac{a^\top\{g_y(w)-g_y(\widetilde w)\}}
        {\|g_y(w)-g_y(\widetilde w)\|^2}\,,
\end{equation*}
where the quotient is defined as zero when
\(g_y(w)=g_y(\widetilde w)\).  Then
\(k_{a,\widetilde w}\in L^2(q_y\otimes\cN(0,I))\), and for every
\(t\ge0\),
\begin{equation*}
 \mathsf T_t^{y,\widetilde w}\ell_{a,y}
 =\ell_{a,y}\cos t+a^\top p\sin t
  +\int_0^t
    \mathsf T_s^{y,\widetilde w}k_{a,\widetilde w}\sin(t-s)\,\dd s
\end{equation*}
in \(L^2(q_y\otimes\cN(0,I))\).  In particular,
\begin{equation}
 (I+\mathsf T_\pi^{y,\widetilde w})\ell_{a,y}
 =\int_0^\pi
   \mathsf T_s^{y,\widetilde w}k_{a,\widetilde w}\sin s\,\dd s\,.
 \label{eq:Duhamel-halfturn-rigorous}
\end{equation}
\end{lemma}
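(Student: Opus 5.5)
The plan is to compute the generator $\opL_{y,\widetilde w}$ on the two affine observables $\ell_{a,y}(w)$ and $m_a(w,p)\deq a^\top p$. Then we close the resulting $2\times2$ linear system by a variation-of-constants argument at the semigroup level.

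\emph{Generator computation.} For $\ell_{a,y}$, only the transport term acts, so
\[
\opL_{y,\widetilde w}\ell_{a,y}=a^\top p=m_a .
\]
For $m_a$, the harmonic drift contributes $-a^\top\{w+g_y(\widetilde w)\}$. The bounce contributes $\lambda\,a^\top(R_hp-p)$, where $h=g_y(w)-g_y(\widetilde w)$ and $\lambda=[p^\top h]_+$. Since $R_hp-p=-2(p^\top h)h/\|h\|^2$ and $\lambda\,(p^\top h)=[p^\top h]_+^2$, the bounce term equals
\[
-2[p^\top h]_+^2\,\frac{a^\top h}{\|h\|^2}.
\]
Writing $-a^\top w=-\ell_{a,y}-a^\top\bar w_y$, we get
\[
\opL_{y,\widetilde w}m_a=-\ell_{a,y}+k_{a,\widetilde w}.
\]

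\emph{Integrability.} We have $|k|\le|a^\top(\bar w_y+g_y(\widetilde w))|+2|a|\,\|p\|^2\|h\|$. Moreover, $\|h\|\le\beta\eta\|w-\widetilde w\|$ by \eqref{eq:normalized-log-density-curvature}. Since $q_y$ is $1$-strongly log-concave, it has Gaussian moments, and $p$ is Gaussian. Hence $k_{a,\widetilde w}\in L^2(q_y\otimes\cN(0,I))$, and likewise $\ell_{a,y},m_a\in L^2$.

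\emph{Domain and Duhamel.} The main obstacle is that $\ell_{a,y}$ and $m_a$ are not compactly supported, and the bounce map is discontinuous where $h=0$. So we must justify that both lie in the domain of the $L^2$ generator of the semigroup from \cref{prop:fiber-semigroup-rigorous}, with the formulas above. I would do this pathwise rather than through a core argument. Along the piecewise-deterministic path, $\ell_{a,y}(w_t)-\ell_{a,y}(w_0)=\int_0^t a^\top p_s\,\dd s$ exactly, since $w$ is continuous. For $m_a$, the jump compensator gives the Dynkin martingale
\[
m_a(\zeta_t)-m_a(\zeta_0)-\int_0^t(\opL m_a)(\zeta_s)\,\dd s .
\]
This is a true martingale because energy conservation bounds $\|p_s\|$ and the number of jumps has exponential moments given the initial energy (as in the non-explosion proof). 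Taking expectations yields, in $L^2$,
\[
\mathsf T_t\ell=\ell+\int_0^t\mathsf T_s m\,\dd s,\qquad
\mathsf T_t m=m+\int_0^t\mathsf T_s(-\ell+k)\,\dd s .
\]
The interchange of integrals is justified by the integrability above, combined with contraction and invariance.

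\emph{Closing the system.} Set $u(t)=\mathsf T_t\ell$ and $v(t)=\mathsf T_t m$. These are continuous $L^2$-valued functions by strong continuity, and they satisfy $u'=v$ and $v'=-u+\mathsf T_tk$ in the integrated sense. The unique solution of this linear ODE in the Banach space is
\[
u(t)=\ell\cos t+m\sin t+\int_0^t\mathsf T_sk\,\sin(t-s)\,\dd s .
\]
To verify this, define $u$ by the right-hand side, check that it satisfies the same integral equations, and apply Grönwall to the difference. Setting $t=\pi$ gives \eqref{eq:Duhamel-halfturn-rigorous}.
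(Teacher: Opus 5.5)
Your proposal is correct and follows the paper's proof: the same generator identities $\opL_{y,\widetilde w}\ell_{a,y}=a^\top p$ and $\opL_{y,\widetilde w}(a^\top p)=-\ell_{a,y}+k_{a,\widetilde w}$, the same moment bound for $k_{a,\widetilde w}$, the integrated Dynkin identities, and variation of constants. The only difference is in justifying Dynkin's formula for the unbounded observables. The paper inserts a cutoff $\chi_R(H_{\widetilde w})$ in the conserved harmonic energy, which commutes with the generator, and then lets $R\to\infty$, whereas you argue pathwise through the compensated jump martingale; both arguments rest on energy conservation.
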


\begin{proof}
The generator maps the position observable \(\ell_{a,y}\) to the
momentum observable \(a^\top p\), and maps \(a^\top p\) back to
\(-\ell_{a,y}+k_{a,\widetilde w}\).  Thus, this pair evolves as a harmonic
system with the additional term \(k_{a,\widetilde w}\), and variation of
constants gives the claimed formula.  We first establish the required
\(L^2\) bounds and justify these generator identities for the unbounded
observables.

First, \(k_{a,\widetilde w}\) is square integrable.  Its first term is
constant in \((w,p)\), while the absolute value of its second term is at
most
\begin{align*}
 2\,[p^\top\{g_y(w)-g_y(\widetilde w)\}]_+^2\,
 \frac{|a^\top\{g_y(w)-g_y(\widetilde w)\}|}
      {\|g_y(w)-g_y(\widetilde w)\|^2}
 &\le2\,\|a\|\,\|p\|^2\,
       \|g_y(w)-g_y(\widetilde w)\|\\
 &\le2\beta\eta\,\|a\|\,\|p\|^2\,\|w-\widetilde w\|\,.
\end{align*}
Since \(q_y\) is \(1\)-strongly log-concave and the momentum is Gaussian,
this bound shows that \(k_{a,\widetilde w}\) belongs to
\(L^2(q_y\otimes\cN(0,I))\).  The same moment bounds give
\(\ell_{a,y},a^\top p\in L^2(q_y\otimes\cN(0,I))\).

Using
\(a^\top(R_hp-p)=-2(p^\top h)(a^\top h)/\|h\|^2\) and
\([u]_+u=[u]_+^2\), the generator formula gives
\begin{align*}
 \opL_{y,\widetilde w}\ell_{a,y}
 &=a^\top p\,,\qquad
 \opL_{y,\widetilde w}(a^\top p)
 =-\ell_{a,y}+k_{a,\widetilde w}\,.
\end{align*}
To justify these identities for the unbounded observables, set $H_{\widetilde w}(w,p)
 \deq\frac12\,\{\|w+g_y(\widetilde w)\|^2+\|p\|^2\}$,
which is preserved by both the flow and the bounces.  Let \(\chi_R\) be
a smooth cutoff equal to one on \([0,R]\) and zero on
\([2R,\infty)\).  Conservation of \(H_{\widetilde w}\) gives, for
\(F\in\{\ell_{a,y},a^\top p\}\),
$\opL_{y,\widetilde w}\{\chi_R(H_{\widetilde w})F\}
 =\chi_R(H_{\widetilde w})\,\opL_{y,\widetilde w}F$.
Applying Dynkin's formula and then letting \(R\to\infty\) in \(L^2\),
using the preceding integrability bounds and semigroup contractivity,
yields
\begin{align*}
 \mathsf T_t^{y,\widetilde w}\ell_{a,y}-\ell_{a,y}
 &=\int_0^t\mathsf T_s^{y,\widetilde w}(a^\top p)\,\dd s\,,\\
 \mathsf T_t^{y,\widetilde w}(a^\top p)-a^\top p
 &=\int_0^t\mathsf T_s^{y,\widetilde w}
   (-\ell_{a,y}+k_{a,\widetilde w})\,\dd s\,.
\end{align*}
Applying variation of constants to this integral system gives
\[
 \mathsf T_t^{y,\widetilde w}\ell_{a,y}
 =\ell_{a,y}\cos t+a^\top p\sin t
 +\int_0^t
   \mathsf T_s^{y,\widetilde w}k_{a,\widetilde w}\sin(t-s)\,\dd s\,.
\]
This is the first identity in the lemma.  At \(t=\pi\), the homogeneous
part is \(-\ell_{a,y}\) and \(\sin(\pi-s)=\sin s\), which gives
\eqref{eq:Duhamel-halfturn-rigorous}.
\end{proof}

The proof of \cref{prop:weighted-affine-halfturn} will also use the
following identities and bounds.

\begin{lemma}[Conditional gradient covariance bound]
\label{lem:conditional-gradient-covariance}
Fix \(y\) and assume \(\beta\eta\le1/2\).  Then
\(\E_{q_y}g_y(W)=-\bar w_y\), and
\begin{equation}
 \Cov_{q_y}(W,g_y(W))=I-\Sigma_y\,,
 \qquad
 \Cov_{q_y}(g_y(W))
 \preceq\frac{\beta\eta}{1-\beta\eta}\,(I-\Sigma_y)
 \preceq2\beta\eta\,(I-\Sigma_y)\,.
 \label{eq:residual-gradient-covariance-bound}
\end{equation}
\end{lemma}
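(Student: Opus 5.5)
The plan is to derive all three claims from Gaussian integration by parts under \(q_y\), using \(\nabla\log q_y(w)=-(w+g_y(w))\) from \eqref{eq:normalized-log-density-curvature}.

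\emph{Mean identity.} Integrate \(\nabla\log q_y\) against \(q_y\). Since \(q_y\) is strongly log-concave, it has Gaussian tails, so boundary terms vanish and \(\E_{q_y}\nabla\log q_y(W)=0\). This gives \(\E_{q_y}[W+g_y(W)]=0\), that is, \(\E_{q_y}g_y(W)=-\bar w_y\).

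\emph{Cross-covariance identity.} Take the test function \(a^\top(w-\bar w_y)\) and use the integration by parts identity
\[
\E_{q_y}\bigl[\phi(W)\,(W+g_y(W))\bigr]=\E_{q_y}\nabla\phi(W)\,.
\]
This yields \(\E[(W-\bar w_y)(W+g_y(W))^\top]=I\). Because \(W+g_y(W)\) has mean zero, the left side equals \(\Cov(W)+\Cov(W,g_y(W))\). Hence \(\Cov_{q_y}(W,g_y(W))=I-\Sigma_y\). As a sanity check, this matrix is symmetric and positive semidefinite, consistent with \(I-\Sigma_y=\eta\nabla^2U_\eta(y)\succeq0\).

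\emph{Covariance bound.} This is the main step. Write \(G\deq g_y(W)-\E g_y(W)\). Apply the integration by parts identity again, now with \(\phi=b^\top(g_y(w)-\E g_y)\):
\[
\E\bigl[(b^\top G)\,(W+g_y(W))\bigr]=\E[\nabla g_y(W)]\,b\,.
\]
Set \(M\deq\E\nabla g_y(W)\), which satisfies \(\alpha\eta I\preceq M\preceq\beta\eta I\). Then
\[
\Cov(g_y,W)+\Cov(g_y)=M\,,\qquad\text{so}\qquad \Cov(g_y)=M-(I-\Sigma_y)\,.
\]
This identity alone does not bound \(\Cov(g_y)\) by a multiple of \(I-\Sigma_y\), so I would add a Cauchy--Schwarz-type argument. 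The most direct route uses the monotonicity of \(g_y\) in the form of co-coercivity: since \(g_y\) is the gradient of a convex function with \(\beta\eta\)-Lipschitz gradient,
\[
\langle g_y(w)-g_y(w'),\,w-w'\rangle\ge\frac{1}{\beta\eta}\,\|g_y(w)-g_y(w')\|^2\,.
\]
Apply this to independent copies \(W,W'\) and take expectations. For scalar projections this gives \(\Cov(g)\preceq\beta\eta\,\Cov(W,g)\) only in the trace sense, but I need a matrix inequality. To get it, I would instead work with the directional version: for a unit vector \(b\), compare \(b^\top g\) with \(b^\top W\) via the operator inequality \(J^\top J\preceq\beta\eta\,J\) for the symmetric Jacobian \(J=\nabla g_y\). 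Combining this with a Brascamp--Lieb/Poincaré estimate,
\[
\Var(b^\top g)\le \E\bigl[b^\top J(I+J)^{-1}J\,b\bigr]\,,
\]
and using \(J(I+J)^{-1}J\preceq\frac{\beta\eta}{1+\beta\eta}J\), the covariance is controlled by an expected Jacobian. Linking that expected Jacobian back to \(I-\Sigma_y\) via \(\Cov(g)=M-(I-\Sigma_y)\) and rearranging gives
\[
\Cov(g)\preceq\frac{\beta\eta}{1-\beta\eta}\,(I-\Sigma_y)\,.
\]
The last step is justified by \(\beta\eta\le1/2\). Making this final rearrangement hold as a genuine matrix inequality, rather than only in trace or in a fixed direction, is the expected obstacle. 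If the scalar argument does not close, I would apply Brascamp--Lieb to the vector-valued observable \(b^\top g\) and a suitably chosen combination with \(W\).
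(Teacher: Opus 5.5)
Your argument is correct and is essentially the paper's. Integration by parts against $q_y$ gives $\E g_y(W)=-\bar w_y$, $\Cov(W,g_y(W))=I-\Sigma_y$ and $\E\nabla g_y(W)=I-\Sigma_y+\Cov(g_y(W))$; a Poincar\'e-type bound on $\Var(b^\top g_y(W))$ combined with $(\nabla g_y)^2\preceq\beta\eta\,\nabla g_y$ is then rearranged for each fixed $b$. The paper uses the Poincar\'e inequality with constant $1$, whereas your Brascamp--Lieb variant gives the sharper bound $\Cov(g_y(W))\preceq\beta\eta\,(I-\Sigma_y)$, which is attained in the Gaussian case. The obstacle you anticipate is not real: $b^\top A b\le b^\top B b$ for every $b$ is by definition the Loewner inequality $A\preceq B$, so you can drop both the co-coercivity detour and the closing hedge.
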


\begin{proof}
Under \(Y_+=y\), we have \(X=y+\sqrt\eta W\) and
\(Y_-=y+2\sqrt\eta W\).  Applying \eqref{eq:Yminus-IBP} to
\(f : u\mapsto F((u-y)/(2\sqrt\eta))\) therefore gives
\[
 \E_{q_y}\nabla F(W)
 =\E_{q_y}[F(W)\,\{W+g_y(W)\}]\,.
\]
Taking \(F\equiv1\) and then the coordinate functions of \(W\) and
\(g_y(W)\), with the same cutoff approximation as in
\cref{lem:UPP-derivative}, gives
\[
 \E g_y(W)=-\bar w_y\,,
 \qquad
 \Cov(W,g_y(W))=I-\Sigma_y\,,
 \qquad
 \E\nabla g_y(W)=I-\Sigma_y+\Cov(g_y(W))\,.
\]
For every \(a\in\R^d\), the same conditional Poincar\'e estimate used in
the proof of \cref{lem:UPP-derivative}, together with
\((\nabla g_y)^2\preceq\beta\eta\,\nabla g_y\) then give
\begin{align*}
 a^\top\Cov(g_y(W))a
 &\le\E\|\nabla g_y(W)\,a\|^2
 \le\beta\eta\,
 a^\top\{I-\Sigma_y+\Cov(g_y(W))\}a\,.
\end{align*}
Rearranging proves \eqref{eq:residual-gradient-covariance-bound}.
\end{proof}

\begin{proof}[Proof of \cref{prop:weighted-affine-halfturn}]
Fix \(y\), and let
\(\widetilde W\sim q_y\) be independent of
\((W,P)\sim q_y\otimes\cN(0,I)\).  We first apply the Duhamel formula to
identify the estimate that remains to be proved.  Using, in order,
Jensen's inequality in \(\widetilde W\),
\eqref{eq:Duhamel-halfturn-rigorous}, the Bochner triangle inequality,
the contractivity of Gaussian momentum averaging and
\(\mathsf T_t^{y,\widetilde w}\), and
\(\int_0^\pi\sin t\,\dd t=2\), we obtain
\begin{align*}
 \|(I+\opH_y)\ell_{a,y}\|_{L^2(q_y)}^2
 &\le \E_{\widetilde W}\bigl\|
  \E_P[(I+\mathsf T_\pi^{y,\widetilde W})\ell_{a,y}]
 \bigr\|_{L^2(q_y)}^2\\
 &\le \E_{\widetilde W}\Bigl(
   \int_0^\pi
   \|\mathsf T_t^{y,\widetilde W}k_{a,\widetilde W}
     \|_{L^2(q_y\otimes\cN(0,I))}\sin t\,\dd t
 \Bigr)^2
 \le4\,\E_{W,\widetilde W,P}k_{a,\widetilde W}^2\,.
\end{align*}
It remains to bound the second moment on the right.

Conditional on \((W,\widetilde W)\), a direct calculation with the
Gaussian momentum \(P\), together with
\(\E_{q_y}g_y(W)=-\bar w_y\) from
\cref{lem:conditional-gradient-covariance}, gives
\begin{align*}
 \E_P[k_{a,\widetilde W}\mid W,\widetilde W]
 &=-a^\top\{g_y(W)-\E_{q_y}g_y(W)\}\,,\\
 \Var_P(k_{a,\widetilde W}\mid W,\widetilde W)
 &=5\,[a^\top\{g_y(W)-g_y(\widetilde W)\}]^2\,.
\end{align*}
After averaging over the independent variables \(W\) and
\(\widetilde W\), the conditional variance term contributes
\(10\Var_{q_y}(a^\top g_y(W))\), while the squared conditional mean
contributes one more copy of the same variance.  Consequently,
\[
 \E_{W,\widetilde W,P}k_{a,\widetilde W}^2
 =11\Var_{q_y}(a^\top g_y(W))\,.
\]
By \cref{lem:conditional-gradient-covariance},
\[
 \E_{W,\widetilde W,P}k_{a,\widetilde W}^2
 \le C\beta\eta\,a^\top(I-\Sigma_y)a\,.
\]
Combining the last estimate with the Duhamel bound proves
\eqref{eq:weighted-affine-halfturn}.  For the global
assertion, \cref{prop:augmentation} gives
\(I-\Sigma_y\succeq\alpha\eta\,(1+\alpha\eta)^{-1}\,I\), so the stated
integrability condition implies \(\ell\in L^2(\pi_\eta)\).  Its
conditional mean is zero.  Applying the conditional inequality with
\(a=a_y\) and integrating in \(y\) proves the result.
\end{proof}

\subsection{Proof of
  \texorpdfstring{\cref{prop:global-halfturn-cutoff}}
  {Proposition~\ref*{prop:global-halfturn-cutoff}}:
  From affine estimates to the half-turn bound}
\label{app:global-halfturn-cutoff}

The following proposition controls the part of the \(\opUPP\)-spectrum near
\(+1\).

\begin{proposition}[{Interpolation bound on the \(\opUPP\)-spectral subspace
associated with \([\frac12,1]\)}]
\label{prop:global-positive-edge-interpolation}
Assume \(\beta\eta\le c\).  If
\(w\in\ran(\mathbf1_{[1/2,1]}(\opUPP))\), then
\begin{equation}
 \|(I+\opHperpperp)\opUperpP w\|^2
 \le C\,\|\opGammaP w\|\,\|\opGammaP^3w\|\,.
 \label{eq:global-positive-edge-interpolation}
\end{equation}
\end{proposition}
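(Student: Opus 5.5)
The plan is to decompose the microscopic function \(\opUperpP w\) fiberwise, at each fixed \(y\), into its conditional affine part in the normalized coordinate \(W=(X-y)/\sqrt\eta\) and a nonlinear remainder. \cref{prop:weighted-affine-halfturn} will handle the affine part, and the trivial bound \(\|I+\opHperpperp\|\le2\) will handle the remainder. Concretely, under \(Y_+=y\) we have \(Y_-=y+2\sqrt\eta W\), so \((\opUperpP w)(x,y)=w(y+2\sqrt\eta W)-(\opUPP w)(y)\). I define the coefficient
\[
 a_y\deq\Sigma_y^{-1}\Cov_{q_y}\bigl(W,w(y+2\sqrt\eta W)\bigr)
\]
and the affine part \(\ell(x,y)=a_y^\top(W-\bar w_y)\), which lies in \(\cH_\perp\). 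The remainder is \(r\deq\opUperpP w-\ell\), which is the \(L^2(q_y)\)-orthogonal complement of the affine functions. Then
\[
 \|(I+\opHperpperp)\opUperpP w\|^2\le2\|(I+\opHperpperp)\ell\|^2+8\|r\|^2,
\]
and it suffices to bound each piece by \(C\|\opGammaP w\|\,\|\opGammaP^3w\|\). The spectral restriction to \([1/2,1]\) enters only through functional calculus. On this subspace \(\opUPP\) is invertible with \(\|\opUPP^{-1}\|\le2\), it commutes with \(\opGammaP\), and \(\opGammaP^2=(I-\opUPP)(I+\opUPP)\asymp I-\opUPP\). By Cauchy--Schwarz in the spectral measure, \(\|\opGammaP^2w\|^2\le\|\opGammaP w\|\,\|\opGammaP^3w\|\). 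So the target is effectively ``second-order smoothness at scale \(\eta\)'', and both pieces will be reduced to \(\|\opGammaP^2w\|^2\).

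\emph{Regularity from \(\opGammaP\).} Write \(w=\opUPP u\) with \(u=\opUPP^{-1}w\) in the same spectral subspace. Then \eqref{eq:UPP-sharp-smoothing} gives \(4\eta\|\nabla w\|^2\le\|\opGammaP u\|^2\le4\|\opGammaP w\|^2\). Applied instead to \(u=\opUPP^{-2}\opGammaP^{2}w\)-type vectors, this gives first-order control of \(\opGammaP^2w\) as well. For second derivatives, I differentiate once more using \cref{lem:UPP-derivative}: \(\nabla\opUPP u=\E[\nabla u(Y_-)\mid Y_+]-\Cov(u(Y_-),\nabla V(X)\mid Y_+)\). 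Iterating this, with the covariance term bounded by \(\beta\sqrt\eta\|\opUperpP u\|\) as in that proof, should give \(\eta^2\|\nabla^2w\|^2\lesssim\|\opGammaP^2 w\|^2+\beta\eta\,\|\opGammaP w\|^2\). The extra term must also be absorbed; I will use \(\|\opGammaP w\|^2\le\gamma_{\rm gap}^{-2}\|\opGammaP^2w\|^2\)-free arguments where possible, otherwise the Bochner route below.

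\emph{Remainder.} Since \(q_y\) is \(1\)-strongly log-concave, the second-order Poincaré inequality (variance minus the best affine approximation is controlled by the Hessian) gives
\[
 \|r\|^2\lesssim\int\E_{q_y}\|\nabla_W^2[w(y+2\sqrt\eta W)]\|^2\,\pi_\eta^Y(\dd y)\lesssim\eta^2\|\nabla^2w\|^2,
\]
after using \(Y_-\overset{d}{=}Y_+\).

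\emph{Affine part.} Stein-type identities give \(a_y\approx2\sqrt\eta\,\E[\nabla w(Y_-)\mid Y_+=y]\). Then \cref{prop:weighted-affine-halfturn} bounds the affine contribution by \(C\beta\eta^2\int\bar g^\top\eta\nabla^2U_\eta\,\bar g\), where \(\bar g\) is this smoothed gradient; here I use \(I-\Sigma_y=\eta\nabla^2U_\eta(y)\). Via the Bochner identity for the \(\pi_\eta^Y\)-Langevin generator \(\mathsf L\), this is at most \(\eta^2\|\mathsf L w\|^2\) up to constants.

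\emph{Main obstacle.} The step I expect to be hardest is comparing \(\eta^2\|\mathsf L w\|^2\), and the Hessian energy, with \(\|\opGammaP^2w\|^2\asymp\|(I-\opUPP)w\|^2\). This needs a quantitative statement that \(I-\opUPP\approx-2\eta\mathsf L\) at second order, on a spectral subspace where \(\opUPP\ge1/2\). I would first try the Dirichlet-form identity \eqref{eq:I-minus-UPP-dirichlet}, a Taylor expansion of \(g(X+\sqrt\eta Z)-g(X-\sqrt\eta Z)\), and \eqref{eq:UPP-sharp-smoothing} to control the error terms. The fallback is to prove the bound directly with \(\|\opGammaP w\|\,\|\opGammaP^3 w\|\) by interpolating between the first-order estimate for \(w\) and the first-order estimate for \(\opGammaP^2w\).
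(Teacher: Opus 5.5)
Your reduction is sound as far as it goes: the spectral Cauchy--Schwarz bound $\|\opGammaP^2w\|^2\le\|\opGammaP w\|\,\|\opGammaP^3w\|$ is correct, as are the split into an affine part plus a remainder and the use of \cref{prop:weighted-affine-halfturn} together with $\|I+\opHperpperp\|\le2$. There is nevertheless a genuine gap. The two estimates that carry the proposition are not proved, and the route you chose for them does not close.

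For the remainder, you pass through a second-order Poincar\'e inequality. That requires $\eta^2\|\nabla^2w\|^2\lesssim\|\opGammaP w\|\,\|\opGammaP^3w\|$, which nothing in the paper provides; \eqref{eq:UPP-sharp-smoothing} is first-order. The bound you sketch, $\eta^2\|\nabla^2w\|^2\lesssim\|\opGammaP^2w\|^2+\beta\eta\,\|\opGammaP w\|^2$, fails exactly where the proposition matters. For $w$ spectrally concentrated near $\opUPP=\rho_{\rm mac}$, one has $\|\opGammaP^3w\|\approx\gamma_{\rm gap}^2\|\opGammaP w\|\approx4\alpha\eta\,\|\opGammaP w\|$. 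Absorbing $\beta\eta\|\opGammaP w\|^2$ would then require $\kappa=O(1)$.

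The affine part has the same defect. Without exploiting the weight $I-\Sigma_y$, the bound $I-\Sigma_y\preceq\beta\eta I$ only gives $(\beta\eta)^2\|\opGammaP w\|^2$, which is too large unless $\beta\eta\lesssim1/\kappa$. Your replacement is Bochner plus the comparison $\eta^2\|\mathsf L w\|^2\lesssim\|(I-\opUPP)w\|^2$, and that is precisely the step you label the main obstacle and leave open. The Stein approximation $a_y\approx2\sqrt\eta\,\E[\nabla w(Y_-)\mid Y_+=y]$ is also never quantified; its error is a covariance with $g_y$ that has to be controlled.

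The missing idea is to stay at first order throughout. Take the affine coefficient to be the conditional mean gradient, $a_y=\E[\nabla w(Y_-)\mid Y_+=y]=(\opUPP\nabla w)(y)$. Then a single conditional Poincar\'e inequality bounds the remainder by $4\eta\mathcal R$, where $\mathcal R=\E\|\nabla w(Y_-)-a_{Y_+}\|^2=\langle\nabla w,(I-\opUPP^2)\nabla w\rangle$.

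To control $\mathcal R$ and the weighted energy $\int a_y^\top(I-\Sigma_y)a_y\,\pi_\eta^Y(\dd y)$ simultaneously, insert the affine decomposition into the derivative formula of \cref{lem:UPP-derivative}. Using $\Cov_{q_y}(W,g_y(W))=I-\Sigma_y$ from \cref{lem:conditional-gradient-covariance}, this gives the exact balance $\nabla(I-\opUPP)w=\nabla w-a_y+2(I-\Sigma_y)a_y+\eta^{-1/2}\varepsilon_y$. Pairing it in $L^2(\pi_\eta^Y)$ with $\nabla w+a_y$ yields $\mathcal R+4\int a_y^\top(I-\Sigma_y)a_y\,\pi_\eta^Y(\dd y)\le2\|\nabla(I-\opUPP)w\|\,\|\nabla w\|$, plus error terms that are absorbed for small $\beta\eta$.

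The product structure of the target then falls out directly. Applying \eqref{eq:UPP-sharp-smoothing} to $\opUPP^{-1}w$ and to $\opUPP^{-1}(I-\opUPP)w$, which is legitimate on $\ran(\mathbf 1_{[1/2,1]}(\opUPP))$, gives $\sqrt\eta\|\nabla w\|\lesssim\|\opGammaP w\|$ and $\sqrt\eta\|\nabla(I-\opUPP)w\|\lesssim\|\opGammaP^3w\|$. Your closing ``fallback'' of interpolating between first-order estimates gestures at this, but the balance identity is the substance of the argument, and it is absent from your proposal.
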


We now prove \cref{prop:global-halfturn-cutoff}.

\begin{proof}[Proof of \cref{prop:global-halfturn-cutoff}]
We regard \(\opUPP\) and \(\opGammaP\) as operators on
\(\cH_{\mathsf P,0}\) and work with the spectral subspaces
\[
\mathcal H\deq
 \ran(\mathbf1_{[1/2,1]}(\opUPP))\,,\qquad
\mathcal H'\deq
 \ran(\mathbf1_{[-1/2,1/2)}(\opUPP))\,.
\]
Recall that
\(\opVperpP=\opUperpP\opGammaP^{-1}\), and therefore
\cref{prop:global-positive-edge-interpolation} yields
\begin{align*}
 \|(I+\opHperpperp)\opVperpP v\|^2
 &=\|(I+\opHperpperp)\opUperpP \opGammaP^{-1}v\|^2
 \le C\,\|v\|\,\|\opGammaP^2v\|\,\qquad
 \forall v\in \mathcal H\,.
\end{align*}
Recall that by \cref{lem:macroscopic-coercivity},
\(\gamma_{\rm gap}^2I\preceq\opGammaP^2\preceq I\).
Applying \cref{lem:operator-comparison} with $\mathsf A = C^{-1/2}\,(\mathsf T^*\mathsf T)^{1/2}$, $\mathsf B = \opGammaP^2|_{\mathcal H}$,
\(m=\gamma_{\rm gap}^2\), and \(M=1\), where $\mathsf T \deq (I+\opHperpperp)\opVperpP|_{\mathcal H}$, gives a factor
\(1+\log(1/\gamma_{\rm gap})\), up to a universal constant.  Shrinking the
universal constant \(c\) in the assumption if necessary, we may suppose that
\(\beta\eta\le1/16\).  Then
\(\gamma_{\rm gap}\le2\sqrt{\alpha\eta}\le1/2\), so the additive constant is
absorbed by \(\log(1/\gamma_{\rm gap})\).  Therefore,
\begin{align*}
 \|(I+\opHperpperp)\opVperpP v\|^2
 \le C\log(1/\gamma_{\rm gap})\,\|\opGammaP v\|^2\,\qquad
 \forall v\in \mathcal H\,.
\end{align*}
On the other hand, on \(\mathcal H'\), no further spectral localization
is needed.  Since \(\opHperpperp\) is a contraction,
\(\opVperpP\) is an isometry, and
\(\opGammaP^2\succeq\frac34I\) on \(\mathcal H'\), we have
\[
 \|(I+\opHperpperp)\opVperpP v\|
 \le2\,\|v\|\le 4\,\|\opGammaP v\|\,\qquad
 \forall v\in \mathcal H'\,.
\]
For a general \(v\in\cH_{\mathsf P,0}\), decompose
\(v=v_{\mathcal H}+v_{\mathcal H'}\).  Combining the preceding two
estimates with \((a+b)^2\le2a^2+2b^2\), and using that \(\opGammaP\)
commutes with \(\opUPP\), proves
\eqref{eq:global-halfturn-cutoff}.
\end{proof}

We prove \cref{prop:global-positive-edge-interpolation} after the following preparatory results.  The first
approximates \(\opUperpP w\), conditionally on \(Y_+\), by a function
that is affine in the rescaled coordinate \(W=(X-Y_+)/\sqrt\eta\), so
that \cref{prop:weighted-affine-halfturn} applies.  The second bounds the resulting
affine data by the gradients of \(w\) and \((I-\opUPP)w\), using the
derivative formula \eqref{eq:UPP-derivative-app}.  Both results hold
for arbitrary functions in \(H^1(\pi_\eta^Y)\); the spectral
condition enters when we invert $\opUPP$.

Recall from \cref{sec:affine-halfturn} the conditional law
\(q_y\deq \Law(W\mid Y_+=y)\), its mean \(\bar w_y\) and covariance
\(\Sigma_y\), and that, conditionally on \(Y_+=y\), we have
\(X=y+\sqrt\eta\, W\) and \(Y_-=y+2\sqrt\eta\, W\).

\begin{lemma}[Conditional affine approximation and energy bound]
\label{lem:conditional-affine-approximation-energy}
Let \(w\in H^1(\pi_\eta^Y)\), and let
\(a_y\deq\E[\nabla w(Y_-)\mid Y_+=y]\) denote the conditional mean of
the gradient.  For \(\pi_\eta^Y\)-almost every \(y\), define
\begin{equation*}
 e_y(u)\deq w(y+2\sqrt\eta u)-(\opUPP w)(y)
   -2\sqrt\eta\,a_y^\top(u-\bar w_y)\,,
 \qquad u\in\R^d\,.
\end{equation*}
Then, \(e_y\) has \(q_y\)-mean zero, the decomposition
\begin{equation}
 \opUperpP w
 =2\sqrt\eta\,a_{Y_+}^\top(W-\bar w_{Y_+})+e_{Y_+}(W)
 \label{eq:conditional-affine-decomposition}
\end{equation}
holds in \(L^2(\pi_\eta)\), and, with
\begin{equation}
 \mathcal R
 \deq\|\nabla w\|^2-\int\|a_y\|^2\,\pi_\eta^Y(\dd y)
 =\E\|\nabla w(Y_-)-a_{Y_+}\|^2\,,
 \label{eq:conditional-affine-R}
\end{equation}
one has
\begin{equation}
 \E[e_{Y_+}(W)^2]\le4\eta\mathcal R\,.
 \label{eq:conditional-affine-remainder}
\end{equation}
Moreover, if \(\beta\eta\leq c\) for a sufficiently small
universal constant $c>0$, then
\begin{equation}
 \mathcal R+
 \int a_y^\top(I-\Sigma_y)a_y\,\pi_\eta^Y(\dd y)
 \le
 C\,\|\nabla(I-\opUPP)w\|\,\|\nabla w\|\,.
 \label{eq:affine-approximation-targets}
\end{equation}
\end{lemma}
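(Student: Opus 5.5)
The first two assertions are direct computations; the substance is \eqref{eq:affine-approximation-targets}, which I plan to derive by pairing the derivative formula of \cref{lem:UPP-derivative} against \(\nabla w\).

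\emph{Decomposition and mean zero.} Under \(\pi_\eta\) with \(Y=Y_+\), we have \(2X-Y=Y_-=Y_++2\sqrt\eta\,W\). Hence
\[
 \opUperpP w=w(Y_-)-(\opUPP w)(Y_+)\,,
\]
and subtracting and adding \(2\sqrt\eta\,a_{Y_+}^\top(W-\bar w_{Y_+})\) gives \eqref{eq:conditional-affine-decomposition}. The function \(e_y\) has \(q_y\)-mean zero for two reasons: \(\E[w(Y_-)\mid Y_+=y]=(\opUPP w)(y)\) by \eqref{eq:UPP-exchangeable}, and the affine term is centered by definition of \(\bar w_y\).

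\emph{Remainder bound.} Since \(q_y\) is \(1\)-strongly log-concave by \eqref{eq:normalized-log-density-curvature}, it satisfies a Poincar\'e inequality with constant \(1\). We have \(\nabla_u e_y(u)=2\sqrt\eta\,\{\nabla w(y+2\sqrt\eta u)-a_y\}\), so
\[
 \E_{q_y}e_y^2\le4\eta\,\E\bigl[\|\nabla w(Y_-)-a_y\|^2\bigm| Y_+=y\bigr]\,.
\]
Integrating in \(y\) gives \eqref{eq:conditional-affine-remainder}. The identity for \(\mathcal R\) in \eqref{eq:conditional-affine-R} is Pythagoras for the conditional mean \(a_{Y_+}\), combined with \(Y_-\sim\pi_\eta^Y\), which gives \(\E\|\nabla w(Y_-)\|^2=\|\nabla w\|^2\). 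All of this is done first for smooth \(w\) and then extended by density.

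\emph{Energy bound.} Write \(\opUPP\) also for its componentwise action on vector fields, and set
\[
 D\deq\langle\nabla w,(I-\opUPP)\nabla w\rangle=\tfrac12\E\|\nabla w(Y_+)-\nabla w(Y_-)\|^2\ge0\,,
\]
as in \eqref{eq:I-minus-UPP-dirichlet}. Because a conditional mean minimizes squared error, \(\mathcal R\le\E\|\nabla w(Y_-)-\nabla w(Y_+)\|^2=2D\). In addition, \(\E\|\nabla w(Y_+)-a_{Y_+}\|^2\le C D\).

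By \cref{lem:UPP-derivative},
\[
 \nabla(I-\opUPP)w(y)=(I-\opUPP)\nabla w(y)+\Cov\bigl(w(Y_-),\nabla V(X)\mid Y_+=y\bigr)\,.
\]
Pairing with \(\nabla w\) gives
\[
 \langle\nabla w,\nabla(I-\opUPP)w\rangle=D+\int\nabla w(y)^\top\Cov\bigl(w(Y_-),\nabla V(X)\mid Y_+=y\bigr)\,\pi_\eta^Y(\dd y)\,.
\]
I will evaluate the covariance using \eqref{eq:conditional-affine-decomposition} and \(\nabla V(X)=\eta^{-1/2}g_y(W)\). The affine part contributes \(2\Cov_{q_y}(W,g_y(W))a_y=2(I-\Sigma_y)a_y\) by \cref{lem:conditional-gradient-covariance}. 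Replacing \(\nabla w(y)\) by \(a_y\) then produces the main term
\[
 2A\,,\qquad A\deq\int a_y^\top(I-\Sigma_y)a_y\,\pi_\eta^Y(\dd y)\,.
\]

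\emph{Main obstacle: the two error terms.} Both are controlled by the smallness \(I-\Sigma_y=\eta\nabla^2U_\eta\preceq\beta\eta I\) from \eqref{eq:Y-curvature}.
\begin{enumerate}[label=\textup{(\alph*)},leftmargin=2.2em]
\item The substitution error \(\int(\nabla w-a_y)^\top(I-\Sigma_y)a_y\) is bounded by \(C\sqrt{\beta\eta}\sqrt D\sqrt A\), using Cauchy--Schwarz in the \((I-\Sigma_y)\)-weighted inner product together with \(\E\|\nabla w(Y_+)-a_{Y_+}\|^2\le CD\).
\item The remainder term \(\Cov(e_y,\eta^{-1/2}g_y\mid y)\), paired with \(\nabla w(y)\), is bounded in absolute value by \(\eta^{-1/2}\|e_y\|_{L^2(q_y)}\,\bigl(\nabla w^\top\Cov(g_y)\nabla w\bigr)^{1/2}\). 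By \eqref{eq:residual-gradient-covariance-bound}, \(\Cov(g_y)\preceq2\beta\eta(I-\Sigma_y)\). Splitting \(\nabla w=a_y+(\nabla w-a_y)\) and using \eqref{eq:conditional-affine-remainder} together with \(\mathcal R\le 2D\), this is at most \(C\sqrt{\beta\eta}\sqrt D(\sqrt A+\sqrt{\beta\eta}\sqrt D)\).
\end{enumerate}

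For \(\beta\eta\le c\), Young's inequality absorbs these errors, giving
\[
 D+A\le C\langle\nabla w,\nabla(I-\opUPP)w\rangle\le C\|\nabla w\|\,\|\nabla(I-\opUPP)w\|\,.
\]
Combined with \(\mathcal R\le2D\), this is \eqref{eq:affine-approximation-targets}. I expect the bookkeeping in (b), where \(\nabla w(y)\) must be split so that the \((I-\Sigma_y)\) weight lands on \(a_y\), to be the delicate step.
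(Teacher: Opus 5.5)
Your proof is correct. It follows the paper's argument almost step for step, with one real difference: the choice of test vector field. The shared steps are the decomposition, the Poincar\'e bound for $e_y$, the split of $\Cov(w(Y_-),\nabla V(X)\mid Y_+=y)$ into $2(I-\Sigma_y)a_y+\eta^{-1/2}\varepsilon_y$, the bounds $I-\Sigma_y\preceq\beta\eta I$ and $\Cov(g_y)\preceq2\beta\eta(I-\Sigma_y)$, and the final absorption.

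The paper pairs \eqref{eq:exact-matrix-balance} with $\nabla w+a_y$. The difference of squares $\int(\nabla w-a_y)^\top(\nabla w+a_y)\,\dd\pi_\eta^Y=\mathcal R$ then puts $\mathcal R$ and $4A$ directly on the left. The cost is that it must control $\int\|\nabla w-a_y\|^2\,\dd\pi_\eta^Y$ by $3\mathcal R$. That step uses $(I-\opUPP)^2\preceq3(I-\opUPP^2)$, which rests on the spectral lower bound $\opUPP\succeq-\frac12$ from \cref{lem:macroscopic-coercivity}.

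You pair with $\nabla w$ alone, which puts the Dirichlet form $D$ and $2A$ on the left. Both comparisons you then need follow from $\opUPP$ being a self-adjoint contraction:
\[
 \mathcal R=\langle\nabla w,(I-\opUPP^2)\nabla w\rangle\le2D\,,
 \qquad
 \int\|\nabla w-a_y\|^2\,\dd\pi_\eta^Y=\langle\nabla w,(I-\opUPP)^2\nabla w\rangle\le2D\,.
\]
Both hold because $1-\lambda^2\le2(1-\lambda)$ and $(1-\lambda)^2\le2(1-\lambda)$ on $[-1,1]$. So your version of this lemma does not need $\opUPP\succeq-\frac12$. The paper still needs that bound elsewhere, in the proof of \cref{prop:global-halfturn-cutoff}, so the saving is local to this lemma.

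The second inequality above is exactly the estimate $\E\|\nabla w(Y_+)-a_{Y_+}\|^2\le CD$ that you asserted without proof. It also follows from the triangle inequality: $\E\|\nabla w(Y_+)-a_{Y_+}\|^2\le 2\cdot 2D+2\mathcal R\le 8D$. You should include one of these justifications explicitly.
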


\begin{proof}
By definition,
\[
 \opUperpP w
 =w(Y_-)-\E[w(Y_-)\mid Y_+]
 =w(Y_-)-(\opUPP w)(Y_+)\,.
\]
Conditioning on \(Y_+=y\) and substituting \(Y_-=y+2\sqrt\eta\, W\)
gives, for \(\pi_\eta^Y\)-almost every \(y\),
\[
 (\opUperpP w)(u)
 =w(y+2\sqrt\eta u)-(\opUPP w)(y)\,,
 \qquad u\in\R^d\,,
\]
which is \eqref{eq:conditional-affine-decomposition}.  This function
belongs to \(L^2(q_y)\) and has \(q_y\)-mean zero; since the
subtracted affine function is \(q_y\)-centered, \(e_y\) has
\(q_y\)-mean zero as well.

The coefficient \(2\sqrt\eta\,a_y\) is chosen to center the gradient
of \(e_y\): differentiating the preceding identity and averaging
under \(q_y\) gives
\[
 \int_{\R^d}\nabla_u(\opUperpP w)(u)\,q_y(\dd u)
 =2\sqrt\eta\,\E[\nabla w(Y_-)\mid Y_+=y]
 =2\sqrt\eta\,a_y\,,
\]
and hence
\[
 \nabla_ue_y(W)=2\sqrt\eta\,\{\nabla w(Y_-)-a_y\}\,.
\]
Since \(q_y\) has Poincar\'e constant at most $1$, it follows that
\[
 \E[e_y(W)^2\mid Y_+=y]
 \le\E[\|\nabla_ue_y(W)\|^2\mid Y_+=y]
 =4\eta\,\E[\|\nabla w(Y_-)-a_y\|^2\mid Y_+=y]\,.
\]
Integrating in \(y\) and expanding the square using the definition of
\(a_y\) proves \eqref{eq:conditional-affine-remainder}.

We turn to the energy estimate.  We may assume \(\beta\eta\le1/2\).
Recall that
\(g_y(u)=\sqrt\eta\,\nabla V(y+\sqrt\eta u)\), so that
\(\nabla V(X)=\eta^{-1/2}g_y(W)\), and that
\(\Cov(W,g_y(W)\mid Y_+=y)=I-\Sigma_y\) by
\cref{lem:conditional-gradient-covariance}.  The decomposition
\eqref{eq:conditional-affine-decomposition} therefore gives
\[
 \Cov(w(Y_-),\nabla V(X)\mid Y_+=y)
 =2(I-\Sigma_y)a_y
  +\eta^{-1/2}\varepsilon_y\,,
 \qquad
 \varepsilon_y\deq\Cov(e_y(W),g_y(W)\mid Y_+=y)\,.
\]
Substituting this into \eqref{eq:UPP-derivative-app} yields
\begin{equation}
 [\nabla(I-\opUPP)w](y)
 =\nabla w(y)-a_y+2(I-\Sigma_y)a_y
   +\eta^{-1/2}\varepsilon_y\,.
 \label{eq:exact-matrix-balance}
\end{equation}
Conditional Cauchy--Schwarz and
\cref{lem:conditional-gradient-covariance} give, for every
\(v\in\R^d\),
\begin{equation}
 |\varepsilon_y^\top v|^2
 \le C\beta\eta\,
 \E[e_y(W)^2\mid Y_+=y]\,
 v^\top(I-\Sigma_y)v\,.
 \label{eq:conditional-affine-error}
\end{equation}
We will also use
\begin{equation}
 0\preceq I-\Sigma_y\preceq\beta\eta I\,.
 \label{eq:conditional-covariance-defect-simple}
\end{equation}
The two-sided bound in
\cref{lem:macroscopic-coercivity}, together with the fact that
\(\opUPP\) fixes constants, gives
\[
 (I-\opUPP)^2\preceq3\,(I-\opUPP^2)
\]
on \(L^2(\pi_\eta^Y)\).  Since
\(a_y=(\opUPP\nabla w)(y)\), with \(\opUPP\) acting coordinatewise,
\begin{equation}
 \mathcal R
 =\bigl\langle \nabla w, (I-\opUPP^2)\nabla w\bigr\rangle\,,
 \qquad
 \int\|\nabla w(y)-a_y\|^2\,\pi_\eta^Y(\dd y)
 \le3\mathcal R\,.
 \label{eq:gradient-defect-by-R}
\end{equation}

We now control \(\mathcal R\) and the affine energy simultaneously.
Take the \(L^2(\pi_\eta^Y)\) inner product of
\eqref{eq:exact-matrix-balance} with the vector field
\(y\mapsto\nabla w(y)+a_y\).  Since
\[
 \int\{\nabla w(y)-a_y\}^\top
          \{\nabla w(y)+a_y\}\,\pi_\eta^Y(\dd y)
 =\mathcal R\,,
\]
we obtain the exact identity
\begin{align}
 \mathcal R
 +4\int a_y^\top(I-\Sigma_y)a_y\,\pi_\eta^Y(\dd y)
 &=
 \int [\nabla(I-\opUPP)w](y)^\top
       \{\nabla w(y)+a_y\}\,\pi_\eta^Y(\dd y)
 \notag\\
 &\qquad
 -2\int a_y^\top(I-\Sigma_y)
       \{\nabla w(y)-a_y\}\,\pi_\eta^Y(\dd y)
 \notag\\
 &\qquad
 -\eta^{-1/2}\int
   \varepsilon_y^\top\{\nabla w(y)+a_y\}\,
   \pi_\eta^Y(\dd y)\,.
 \label{eq:short-affine-energy-identity}
\end{align}

We bound the three terms on the right separately.  For the first,
conditional Jensen gives $\int\|a_y\|^2\,\pi_\eta^Y(\dd y)\le\|\nabla w\|^2$,
and hence
\begin{equation}
 \Bigl\lvert \int [\nabla(I-\opUPP)w](y)^\top
       \{\nabla w(y)+a_y\}\,\pi_\eta^Y(\dd y)\Bigr\rvert
  \leq2\,\|\nabla(I-\opUPP)w\|\,\|\nabla w\|\,.
 \label{eq:short-affine-first-term}
\end{equation}
For the second, Cauchy--Schwarz,
\eqref{eq:conditional-covariance-defect-simple}, and
\eqref{eq:gradient-defect-by-R} give
\begin{align}
 &2\,\Bigl|
 \int a_y^\top(I-\Sigma_y)
       \{\nabla w(y)-a_y\}\,\pi_\eta^Y(\dd y)
 \Bigr|
 \le
 C\sqrt{\beta\eta}\,
 \mathcal R^{1/2}\,
 \Bigl\{
 \int a_y^\top(I-\Sigma_y)a_y\,\pi_\eta^Y(\dd y)
 \Bigr\}^{1/2}\,.
 \label{eq:short-affine-second-term}
\end{align}
Finally, using the pointwise identity
\(\nabla w(y)+a_y=\{\nabla w(y)-a_y\}+2a_y\),
\eqref{eq:conditional-covariance-defect-simple} and
\eqref{eq:gradient-defect-by-R} imply
\begin{align}
 &\int
 \{\nabla w(y)+a_y\}^\top(I-\Sigma_y)
 \{\nabla w(y)+a_y\}\,\pi_\eta^Y(\dd y)
 \le
 C\beta\eta\,\mathcal R
 +8\int a_y^\top(I-\Sigma_y)a_y\,\pi_\eta^Y(\dd y)\,.
 \label{eq:affine-sum-defect}
\end{align}
Using \eqref{eq:conditional-affine-error}, Cauchy--Schwarz in \(y\),
\eqref{eq:conditional-affine-remainder}, and
\eqref{eq:affine-sum-defect}, we obtain
\begin{align}
 &\eta^{-1/2}\,\Bigl|
 \int\varepsilon_y^\top\{\nabla w(y)+a_y\}\,
      \pi_\eta^Y(\dd y)\Bigr|
 \notag\\
 &\qquad\le
 C\sqrt{\beta\eta\,\mathcal R}\,
 \Bigl\{
   \beta\eta\,\mathcal R
   +\int a_y^\top(I-\Sigma_y)a_y\,\pi_\eta^Y(\dd y)
 \Bigr\}^{1/2}
 \notag\\
 &\qquad\le
 C\sqrt{\beta\eta}\,
 \mathcal R^{1/2}\,
 \Bigl\{
 \int a_y^\top(I-\Sigma_y)a_y\,\pi_\eta^Y(\dd y)
 \Bigr\}^{1/2}
 +C\beta\eta\,\mathcal R\,.
 \label{eq:short-affine-third-term}
\end{align}

Substituting these three estimates into
\eqref{eq:short-affine-energy-identity} gives
\begin{align*}
 \mathcal R
 +4\int a_y^\top(I-\Sigma_y)a_y\,\pi_\eta^Y(\dd y)
 &\le
 2\,\|\nabla(I-\opUPP)w\|\,\|\nabla w\|\\
 &\qquad
 +C\sqrt{\beta\eta}\,
 \mathcal R^{1/2}
 \Bigl\{
 \int a_y^\top(I-\Sigma_y)a_y\,\pi_\eta^Y(\dd y)
 \Bigr\}^{1/2}
 +C\beta\eta\,\mathcal R\,.
\end{align*}
For sufficiently small universal \(\beta\eta\), the last two terms
are absorbed into the left-hand side.  This proves \eqref{eq:affine-approximation-targets}.
\end{proof}

\begin{proof}[Proof of \cref{prop:global-positive-edge-interpolation}]
Choose a small enough universal constant in the hypothesis, if necessary, so
that \cref{lem:conditional-affine-approximation-energy} applies.  On
\(\ran(\mathbf1_{[1/2,1]}(\opUPP))\), the operator
\(\opUPP^{-1}\) is bounded.
Applying \eqref{eq:UPP-sharp-smoothing} first to
\(\opUPP^{-1}w\) and then to
\(\opUPP^{-1}(I-\opUPP)w\) gives
\begin{align}
 \sqrt\eta\,\|\nabla w\|
 &\le C\,\|\opGammaP\opUPP^{-1}w\|
 \le C\,\|\opGammaP w\|\,,\notag\\
 \sqrt\eta\,\|\nabla(I-\opUPP)w\|
 &\le C\,\|\opGammaP\opUPP^{-1}(I-\opUPP)w\|
 \le C\,\|\opGammaP^3w\|\,.
 \label{eq:positive-edge-gradient-bounds}
\end{align}
Here, the final inequality in each line follows by spectral calculus
on \([\frac12,1]\).

Decompose \(\opUperpP w\) according to
\eqref{eq:conditional-affine-decomposition}.  The affine part has
coefficient \(2\sqrt\eta\,a_y\), so
\cref{prop:weighted-affine-halfturn}, \(\|I+\opHperpperp\|\le2\)
applied to the error term, and \((r+s)^2\le2r^2+2s^2\) give
\begin{align*}
 \|(I+\opHperpperp)\opUperpP w\|^2
 &\le C\beta\eta^2
       \int a_y^\top(I-\Sigma_y)a_y\,\pi_\eta^Y(\dd y)
      +C\,\E[e_{Y_+}(W)^2]\\
 &\le
 C\eta\,\|\nabla(I-\opUPP)w\|\,\|\nabla w\|\,,
\end{align*}
where the last line uses \(\beta\eta\le1\) and
\cref{lem:conditional-affine-approximation-energy}.  Finally,
\eqref{eq:positive-edge-gradient-bounds} gives \eqref{eq:global-positive-edge-interpolation}.
\end{proof}

\section{Technical tools}
\label{app:tools}

This appendix collects the functional analysis and log-concavity tools
used throughout the process and convergence analyses.

\subsection{Hilbert space and Markov operator conventions}
\label{app:hilbert-conventions}

\begin{definition}[Hilbert space notation]
Let \(\zeta\) be a probability measure.  The real Hilbert space
\(L^2(\zeta)\) consists of equivalence classes of measurable functions
\(f\) such that \(\int f^2\,\dd\zeta<\infty\).  Its inner product and
norm are
\begin{equation}
 \langle f,g\rangle_{L^2(\zeta)}\deq\int fg\,\dd\zeta\,,
 \qquad
 \|f\|_{L^2(\zeta)}\deq\langle f,f\rangle_{L^2(\zeta)}^{1/2}\,.
 \label{eq:L2-def}
\end{equation}
We write
\begin{equation}
 L_0^2(\zeta)\deq\Bigl\{f\in L^2(\zeta):\int f\,\dd\zeta=0\Bigr\}\,.
 \label{eq:centered-L2}
\end{equation}
When \(\zeta=\pi_\eta\), we abbreviate the inner product and norm by
\(\langle f,g\rangle\) and \(\|f\|\).
\end{definition}

Two functions equal \(\zeta\)-almost everywhere represent the same
vector.  A closed linear subspace \(\mathcal M\subseteq L^2(\zeta)\)
has orthogonal complement
\(\mathcal M^\perp\deq\{g:\langle f,g\rangle=0\text{ for all }f\in\mathcal M\}\).
Its orthogonal projection is the unique bounded operator
\(\mathsf P_{\mathcal M}\) for which $\mathsf P_{\mathcal M} f \in \mathcal M$ and
\(f-\mathsf P_{\mathcal M}f\in\mathcal M^\perp\).

All operators in the convergence analysis are bounded linear operators
on the indicated Hilbert spaces unless explicitly stated otherwise.
The adjoint \(\mathsf L^*\) is characterized by
\(\langle\mathsf Lf,g\rangle=\langle f,\mathsf L^*g\rangle\).
An operator is self-adjoint if \(\mathsf L=\mathsf L^*\), a contraction
if \(\|\mathsf Lf\|\le\|f\|\), and unitary if it is onto and preserves
norms.  A self-adjoint involution, meaning \(\mathsf L^2=I\), is
unitary.  For self-adjoint operators \(\mathsf L_1,\mathsf L_2\),
\begin{equation}
 \mathsf L_1\preceq\mathsf L_2
 \quad\Longleftrightarrow\quad
 \langle f,(\mathsf L_2-\mathsf L_1)f\rangle\ge0
 \quad\text{for every }f
 \label{eq:operator-order}
\end{equation}
is the Loewner order.  If \(\mathsf L\) is self-adjoint, the
spectral theorem defines \(\psi(\mathsf L)\) for bounded Borel functions
\(\psi\); in particular \(\mathbf 1_E(\mathsf L)\) is the orthogonal
projection onto the spectral set \(E\), and a non-negative operator has
a unique non-negative square root.

\begin{definition}[Markov operators and density evolution]
Let \(\mathcal K(z,\dd z')\) be a Markov kernel preserving \(\zeta\).  Its
Markov operator on observables is
\begin{equation}
 (\mathsf Kf)(z)\deq\int f(z')\,\mathcal K(z,\dd z')\,.
 \label{eq:markov-operator}
\end{equation}
It is a contraction on \(L^2(\zeta)\).  Its adjoint
\(\mathsf K^*\) governs density evolution: if a law has density \(h\) with
respect to \(\zeta\), then its density after one step is \(\mathsf K^*h\).
Constants are fixed by both operators.
\end{definition}

If \(P\ll\zeta\) has density \(h\), then
\begin{equation}
 \chi^2(P\|\zeta)=\|h-1\|_\zeta^2\,.
 \label{eq:chi-L2}
\end{equation}
Consequently, an operator norm bound for \(\mathsf K\) on
\(L_0^2(\zeta)\) gives the same bound for its adjoint
\(\mathsf K^*\), and hence gives \(\chi^2\) contraction.

\subsection{Strong log-concavity and covariance bounds}

We repeatedly use the following standard consequences of
Bakry--\'Emery curvature.  If a probability measure
\(\nu(\dd z)\propto e^{-W(z)}\,\dd z\) satisfies
\(\nabla^2W\succeq mI\), then, for smooth \(f\),
\begin{equation}
 \Var_\nu(f)\le\frac1m\,\E_\nu\|\nabla f\|^2\,,
 \label{eq:poincare-tool}
\end{equation}
and \(\nu\) satisfies the log-Sobolev inequality with constant
\(1/m\).  In particular, every \(L\)-Lipschitz \(f\) obeys Gaussian
concentration
\begin{equation}
 \Prob\{f-\E f\ge t\}
 \le\exp \Bigl(-\frac{mt^2}{2L^2}\Bigr)\,.
 \label{eq:concentration-tool}
\end{equation}
These statements follow from the Bakry--\'Emery criterion; see, for
example,~\cite[Chapters 4 and 5]{bgl}.

\begin{lemma}[Two-sided covariance bound]
\label{lem:covariance-bounds}
Let \(\nu(\dd z)\propto e^{-W(z)}\,\dd z\) with
\begin{equation}
 mI\preceq\nabla^2W(z)\preceq LI\,.
 \label{eq:W-Hessian-bounds}
\end{equation}
Then
\begin{equation}
 L^{-1}I\preceq\Cov_\nu(Z)\preceq m^{-1}I\,.
 \label{eq:covariance-bounds}
\end{equation}
\end{lemma}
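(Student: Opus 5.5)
The upper bound \(\Cov_\nu(Z)\preceq m^{-1}I\) comes straight from the Poincar\'e inequality \eqref{eq:poincare-tool}. For a unit vector \(a\), I will apply it to the linear function \(f(z)=a^\top z\). Since \(\nabla f=a\), this gives \(a^\top\Cov_\nu(Z)a=\Var_\nu(a^\top Z)\le m^{-1}\|a\|^2\). The inequality requires \(f\in L^2(\nu)\), which holds because strong log-concavity gives Gaussian tails. If needed, I will first apply the inequality to smooth truncations of \(f\) and pass to the limit by dominated convergence.

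For the lower bound \(\Cov_\nu(Z)\succeq L^{-1}I\), I will use the Cram\'er--Rao / Brascamp--Lieb-type argument via integration by parts. For a unit vector \(a\), the identity \(\E_\nu[\nabla W(Z)]=0\) and the identity \(\E_\nu[(a^\top Z)\,\nabla W(Z)]=a\) both follow from \(\int\nabla(g\,e^{-W})=0\). The boundary terms vanish because \(e^{-W}\) decays like a Gaussian while \(\nabla W\) grows at most linearly. Writing \(\bar z=\E_\nu Z\), Cauchy--Schwarz then gives, for every \(b\in\R^d\),
\[
 (a^\top b)^2=\bigl(\E_\nu[(a^\top(Z-\bar z))\,b^\top\nabla W(Z)]\bigr)^2
 \le a^\top\Cov_\nu(Z)a\;\E_\nu[(b^\top\nabla W)^2]\,.
\]

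It remains to bound the second factor, and this is the only nontrivial step. Integrating by parts once more gives \(\E_\nu[(b^\top\nabla W)^2]=\E_\nu[b^\top\nabla^2W\,b]\le L\|b\|^2\); this is the identity \(\E\,\nabla W\nabla W^\top=\E\,\nabla^2 W\), which requires \(W\in C^2\). Taking \(b=a\) then yields \(1\le L\,a^\top\Cov_\nu(Z)a\), which is the lower bound.

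If \(W\) is only \(C^{1,1}\), I will instead mollify \(W\). Convolving \(W\) with a Gaussian preserves the Hessian bounds \eqref{eq:W-Hessian-bounds}, so the argument applies to the mollified potential. As the mollification parameter tends to zero, the covariances converge by dominated convergence, using the uniform Gaussian tails.
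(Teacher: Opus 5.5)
Your proposal is correct and follows the paper's route. The paper gets the upper bound from the Brascamp--Lieb inequality applied to linear functions, which under the uniform bound $\nabla^2W\succeq mI$ gives exactly what your Poincar\'e argument gives. It gets the lower bound from the Cram\'er--Rao inequality $\Cov_\nu(Z)\succeq(\E_\nu\nabla^2W(Z))^{-1}\succeq L^{-1}I$ for the location family, and your integration-by-parts and Cauchy--Schwarz computation is precisely a proof of that bound (in scalar form with $b=a$, which suffices here).
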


\begin{proof}
The upper bound is the Brascamp--Lieb
inequality~\cite[Chapter~4]{bgl} applied to linear functions.  The lower
bound follows from the Cram\'er--Rao inequality for the location family
generated by \(\nu\):
\begin{align*}
 \Cov_\nu(Z)
 &\succeq \bigl(\E_\nu \nabla^2W(Z) \bigr)^{-1}
 \succeq L^{-1}I\,. \qedhere
 \end{align*}
\end{proof}

\paragraph{Hessian of the augmented marginal.}

Let \(\bar x_y\deq\E[X\mid Y=y]\) and
\(\Sigma_y\deq\eta^{-1}\,\Cov(X\mid Y=y)\), as in the normalized
coordinates of Appendix~\ref{app:key-estimates}.  Differentiating the
log-density of \(\pi_\eta^Y\) gives
\begin{equation}
 \nabla U_\eta(y)=\eta^{-1}(y-\bar x_y)\,,
 \qquad
 \nabla^2U_\eta(y)=\eta^{-1}(I-\Sigma_y)\,.
 \label{eq:Moreau-Hessian}
\end{equation}
The conditional potential in \(x\) has Hessian between
\((\alpha+\eta^{-1})I\) and \((\beta+\eta^{-1})I\).  Applying
\cref{lem:covariance-bounds} in \eqref{eq:Moreau-Hessian} gives
\eqref{eq:Y-curvature}.

\subsection{Comparison of positive operators}

\begin{lemma}[Comparison of positive operators]
\label{lem:operator-comparison}
Let $\mathsf A,\mathsf B$ be bounded positive semidefinite self-adjoint
operators on a Hilbert space $\mathcal H$, and suppose that for $M\geq m>0$, $0<m I \preceq \mathsf B \preceq M I$.
Assume that, for every $v\in\mathcal H$, $\|\mathsf A v\|^2
    \le
    \|v\|\,\|\mathsf Bv\|$.
Then, for $K\deq \frac{M}{m}$,
\[
    \mathsf A^2
    \preceq
    \bigl(1 + \frac{\log K}{\pi}\bigr)\,\mathsf B\,.
\]
\end{lemma}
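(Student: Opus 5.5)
The plan is to normalize by $\mathsf B$ and then use a spectral (dyadic) decomposition of $\mathsf B$. Set $\mathsf C\deq\mathsf B^{-1/2}\mathsf A^2\mathsf B^{-1/2}$, which is bounded, positive and self-adjoint because $\mathsf B\succeq mI$. The claim $\mathsf A^2\preceq c\,\mathsf B$ is then equivalent to $\|\mathsf C\|\le c$. Substituting $v=\mathsf B^{-1/2}u$ into the hypothesis gives
\[
 \langle u,\mathsf Cu\rangle=\|\mathsf A\mathsf B^{-1/2}u\|^2\le\|\mathsf B^{-1/2}u\|\,\|\mathsf B^{1/2}u\|\,,\qquad u\in\mathcal H\,.
\]
So the task is to show that a positive operator whose form is dominated by the geometric mean of $\|\mathsf B^{-1/2}u\|$ and $\|\mathsf B^{1/2}u\|$ has norm at most logarithmic in the spectral spread $K=M/m$ of $\mathsf B$. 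The key observation is that the right-hand side is at most $\|u\|^2$ times the square root of the spread of $\mathsf B$ on the spectral support of $u$. Hence the bound is $O(1)$ on vectors spectrally localized to a band of bounded ratio, and the only loss comes from assembling the bands.

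First I would establish a non-sharp version, which already suffices for the application in \cref{prop:global-halfturn-cutoff}. Fix a ratio $r>1$ and let $E_j\deq\mathbf 1_{[mr^{j},mr^{j+1})}(\mathsf B)$ for $0\le j<J\deq\lceil\log_r K\rceil$, with the last band closed at $M$. Write $u=\sum_j u_j$ with $u_j=E_ju$. On each band, $\|\mathsf B^{-1/2}u_j\|\,\|\mathsf B^{1/2}u_j\|\le\sqrt r\,\|u_j\|^2$, so $\|\mathsf C^{1/2}u_j\|^2\le\sqrt r\,\|u_j\|^2$. By the triangle inequality and Cauchy--Schwarz,
\[
 \|\mathsf C^{1/2}u\|^2\le\Bigl(\sum_j\|\mathsf C^{1/2}u_j\|\Bigr)^2\le J\sqrt r\sum_j\|u_j\|^2=J\sqrt r\,\|u\|^2\,.
\]
This gives $\|\mathsf C\|\le C(1+\log K)$. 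Since $\mathsf C$ is positive, it suffices to bound $\langle u,\mathsf Cu\rangle$ over all $u$, so no off-diagonal terms need separate treatment.

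To reach the stated sharp constant $1+\log K/\pi$, the crude triangle inequality across bands has to be replaced by an argument that exploits decay of the cross terms $\langle u_i,\mathsf Cu_j\rangle$ in $|i-j|$. For this I would complexify and use the imaginary powers $\mathsf B^{i\theta}$, which are unitary and commute with $\mathsf B$. Applying the hypothesis to $\mathsf B^{i\theta}u$ leaves the right-hand side unchanged. In logarithmic spectral coordinates $s=\log\lambda\in[\log m,\log M]$, averaging over $\theta$ against a suitable weight turns the problem into bounding a convolution operator on an interval of length $\log K$. The natural kernel here is the Fourier transform of $1/\cosh$, reflecting that the hypothesis is a geometric mean of the weights $e^{\pm s/2}$. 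Its Schur or Hilbert-matrix type bound should produce a constant of the form $1+\log K/\pi$, with the $1/\pi$ coming from $\int\operatorname{sech}$-type integrals.

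I expect this sharp-constant step to be the main obstacle, specifically choosing the averaging weight so that the resulting kernel has the exact $1/\pi$ normalization. If it resists, I would fall back on the dyadic proof above, optimized over $r$. Only the logarithmic order is used downstream, where the additive constant is absorbed into $\log(1/\gamma_{\rm gap})$.
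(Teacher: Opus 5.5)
\textbf{There is a genuine gap.} Your reduction to $\|\mathsf C\|$ is correct, and so is your dyadic estimate (take $J\ge1$ when $K=1$). The resulting bound $\mathsf A^2\preceq\sqrt r\,\lceil\log K/\log r\rceil\,\mathsf B$ is all that the proof of \cref{prop:global-halfturn-cutoff} needs. It does not prove the lemma as stated, though. Since $r^J\ge K$, your constant is at least $JK^{1/(2J)}\ge J+\tfrac12\log K>1+\tfrac{\log K}{\pi}$ for every $K>1$.

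The part meant to reach $1+\frac{\log K}{\pi}$ is not an argument: no weight, kernel, or Schur estimate is specified. Moreover, the mechanism you name cannot be what drives the sharp constant. The hypothesis gives no decay of $\langle u_i,\mathsf Cu_j\rangle$ in $|i-j|$. Take the extremal example in the remark after the lemma: $\mathsf B$ is multiplication by $x$ on $[1,K]$ and $\mathsf A^2f=\pi^{-1}\int_1^Kf$. There $\mathsf C$ is rank one, and for $u=x^{-1/2}$ every cross term equals $\pi^{-1}(\log r)^2$. In that example your triangle inequality and Cauchy--Schwarz steps across bands are tight. The entire loss sits in the per-band bound $\sqrt r\,\|u_j\|^2$, whereas $\langle u_j,\mathsf Cu_j\rangle=\pi^{-1}\log r\,\|u_j\|^2$ there. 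That bound is weak because it applies the hypothesis only to spectrally localized vectors.

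The idea you are missing is to decompose inside the operator rather than decomposing the vector. With $X=\mathsf A\mathsf B^{-1/2}$ one has $\|\mathsf C\|=\|X^*X\|=\|XX^*\|=\|\mathsf A\mathsf B^{-1}\mathsf A\|$. The resolvent identity $\mathsf B^{-1}=\frac2\pi\int_0^\infty(t^2I+\mathsf B^2)^{-1}\,\mathrm dt$ writes $\mathsf A\mathsf B^{-1}\mathsf A$ as a superposition of positive operators. Hence
\[
 \|\mathsf A\mathsf B^{-1}\mathsf A\|\le\frac2\pi\int_0^\infty\bigl\|\mathsf A(t^2I+\mathsf B^2)^{-1}\mathsf A\bigr\|\,\mathrm dt\,,
\]
with no cross terms at all. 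For each integrand, apply the hypothesis to the non-localized vector $u=(t^2I+\mathsf B^2)^{-1/2}v$. This gives $\|\mathsf A(t^2I+\mathsf B^2)^{-1}\mathsf A\|\le\sup_{\lambda\in[m,M]}\lambda/(t^2+\lambda^2)$. Integrating over the three regimes $t<m$, $m\le t\le M$, $t>M$ yields exactly $\frac2\pi\bigl(\frac\pi4+\frac12\log K+\frac\pi4\bigr)=1+\frac{\log K}{\pi}$.

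Your dyadic proof is the special case of this scheme with the sharp partition $\sum_jE_j=I$ in place of the smooth one; indeed $\|\mathsf AE_j\mathsf B^{-1}\mathsf A\|=\|E_j\mathsf CE_j\|$. In the variables $t=e^\tau$, $\lambda=e^s$, the smooth weight is $\frac1\pi\operatorname{sech}(\tau-s)$. So your instinct about $\operatorname{sech}$ was right. It enters as this partition of unity, not as a Fourier multiplier that produces cross-term decay.
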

\begin{proof}
By replacing
\[
    \mathsf A\leftarrow m^{-1/2}\mathsf A\,,
    \qquad
    \mathsf B\leftarrow m^{-1}\mathsf B\,,
\]
we may assume without loss of generality that $I\preceq \mathsf B\preceq K I$.
For non-zero $v \in \mathcal H$,
\begin{align*}
    \langle v,\mathsf A^2 v\rangle
    &= \langle \mathsf B^{1/2} v, \mathsf B^{-1/2} \mathsf A^2 \mathsf B^{-1/2} \mathsf B^{1/2} v \rangle
    \le \|\mathsf B^{-1/2} \mathsf A^2 \mathsf B^{-1/2}\|\,\langle v, \mathsf B v\rangle
    = \|\mathsf A\mathsf B^{-1} \mathsf A\|\,\langle v, \mathsf B v\rangle\,.
\end{align*}
Also,
\begin{align*}
    \|\mathsf A\mathsf B^{-1}\mathsf A\|
    &= \Bigl\lVert \frac{2}{\pi}\int_0^\infty \mathsf A(t^2 I + \mathsf B^2)^{-1}\mathsf A\,\dd t\Bigr\rVert
    \le \frac{2}{\pi} \int_0^\infty \|\mathsf A(t^2 I + \mathsf B^2)^{-1}\mathsf A\|\,\dd t\,.
\end{align*}
Moreover, by hypothesis,
\begin{align*}
    \|\mathsf A(t^2 I + \mathsf B^2)^{-1} \mathsf A\|
    &= \|(t^2 I + \mathsf B^2)^{-1/2} \mathsf A^2 (t^2 I + \mathsf B^2)^{-1/2}\| \\
    &= \sup_{v \in \mathcal H \setminus \{0\}} \frac{\|\mathsf A (t^2 I + \mathsf B^2)^{-1/2} v\|^2}{\|v\|^2}
    = \sup_{u \in \mathcal H \setminus \{0\}} \frac{\|\mathsf A u\|^2}{t^2 \,\|u\|^2 + \|\mathsf Bu\|^2} \\
    &\le \sup_{u \in \mathcal H \setminus \{0\}} \frac{\|u\|\,\|\mathsf Bu\|}{t^2\,\|u\|^2 + \|\mathsf Bu\|^2}\,.
\end{align*}
Setting $r \deq \|\mathsf Bu\|/\|u\| \in [1,K]$,
\begin{align*}
    \|\mathsf A(t^2 I + \mathsf B^2)^{-1} \mathsf A\|
    &\le \sup_{r \in [1,K]} \frac{r}{t^2 + r^2}
    = \begin{cases}
        1/(t^2 + 1)\,, & t < 1\,, \\
        1/(2t)\,, & t \in [1, K]\,, \\
        K/(t^2 + K^2)\,, & t > K\,.
    \end{cases}
\end{align*}
Hence,
\begin{align*}
    \|\mathsf A\mathsf B^{-1}\mathsf A\|
    &\le \frac{2}{\pi}\,\Bigl(\int_0^1 \frac{1}{t^2+1}\,\dd t + \int_1^K \frac{1}{2t}\,\dd t + \int_K^\infty \frac{K}{t^2+K^2}\,\dd t\Bigr)
    = 1 + \frac{\log K}{\pi}\,.
\end{align*}
This completes the proof.
\end{proof}

 The logarithmic dependence on $K$ in \cref{lem:operator-comparison} is sharp in general.
\begin{remark}
    Let $K > 1$, take $\mathcal H = L^2([1,K], \dd x)$, and $\mathsf Af(x) \deq (\pi\,(K-1))^{-1/2}\int_1^K f(y)\,\dd y$, $\mathsf Bf(x) \deq xf(x)$.
    Then, by the classical integral Carlson inequality,
    \[
        \|\mathsf Af\|^2
        = \frac{1}{\pi}\,\Bigl(\int_1^K f(y)\,\dd y\Bigr)^2
        \le \|f\|\,\|\mathsf Bf\|\,.
    \]
    Thus, the hypotheses of \cref{lem:operator-comparison} are met.
    If we take $f(x) \deq 1/x$, then $\|\mathsf Af\|^2 = (\log K)^2/\pi$, whereas $\langle f, \mathsf B f\rangle = \log K$, so the best possible constant in \cref{lem:operator-comparison} can be no better than $\max\{1, (\log K)/\pi\}$, where the maximum with $1$ arises from the trivial case $\mathsf A = \mathsf B = I$.
\end{remark}

\small
\bibliographystyle{alpha}
\bibliography{ref}

\end{document}